\documentclass[12pt]{extarticle}
\usepackage{amsmath, amsthm, amssymb, mathtools, hyperref,color}
\usepackage{graphicx}
\usepackage{caption}
\usepackage{subcaption}
\usepackage{tikz}
\usetikzlibrary{calc}
\tikzstyle{box} = [rectangle, minimum width=3cm, minimum height=1cm, text width=6cm, text centered, draw=black]
\tikzstyle{arrow} = [thick,->,>=stealth]

\newtheorem{theorem}{Theorem}
\numberwithin{theorem}{section}
\newtheorem{proposition}[theorem]{Proposition}
\newtheorem{lemma}[theorem]{Lemma}
\newtheorem{corollary}[theorem]{Corollary}
\newtheorem{definition}[theorem]{Definition}
\newtheorem{remark}[theorem]{Remark}
\newtheorem{conjecture}[theorem]{Conjecture}

\newcommand{\legendre}[2][p]{\ensuremath{\left( \frac{#2}{#1} \right) }}
\title{Asymptotic Brill-Noether Existence at the Half-Canonical Degree \\
\large{Energy Pairing, Cheeger Inequality and Covering Radii}}

\author{Madhusudan Manjunath\footnote{Part of this work was carried out while visiting IHES, Bures-sur-Yvette. We thank IHES for its generous support and warm hospitality.} }

\begin{document}
\maketitle
\begin{abstract}
We study \emph{asymptotic versions} of the Brill-Noether existence conjecture on graphs via techniques inspired by the \emph{geometry of numbers}. We confirm an asymptotic version of the conjecture at (and near) the \emph{half-canonical degree} in several well-connected families of graphs. They include expander graphs of even valence, almost-Ramanujan graphs of a fixed valence at least five and certain random graphs. In particular, for any fixed $k \geq 5$, almost all simple, connected, $k$-regular graphs satisfy the Brill-Noether existence conjecture at the half-canonical degree up to a constant factor. The key tool is a \emph{Cheeger-style inequality} for the covering radius of a certain periodic set with respect to the energy quadratic form associated with the graph. As an application, we lower bound the diameter of graphs associated with certain dynamical systems called \emph{reversal systems}. We conclude with a suggestion to tackle the asymptotic version of the conjecture, in general, i.e. beyond half-canonical degrees. 
\end{abstract}




\section{Introduction}

Brill-Noether theory, in its most basic form, concerns the existence of divisor classes of prescribed degree and rank on an ``appropriate'' object.  Typical objects include compact Riemann surfaces, projective algebraic curves, tropical curves and finite graphs. 

We begin by recalling classical Brill-Noether theory. Let $X$ be a smooth projective curve of genus $g$ over an algebraically closed field.
Define the \emph{Brill-Noether number} $\rho(g,r,d):=g-(r+1)(g-d+r)$.   The Brill-Noether theorem \cite[Chapter 5]{ArbCorGriHar85} consists of two parts: the \emph{existence} part and the \emph{non-existence} part. Let $d_0, r_0$ be non-negative integers.
The existence part asserts that if $\rho(g,r_0,d_0) \geq 0$, then $X$ has a divisor of degree $d_0$ and rank at least $r_0$ (or equivalently, a divisor of degree at most $d_0$ and rank equal to $r_0$).  The non-existence part asserts that for a general $X$,
if  $\rho(g,r_0,d_0)<0$, then $X$ does not have a divisor of degree $d_0$ and rank at least $r_0$.

More recently, there has been considerable interest in Brill-Noether theory of  i. \emph{special curves}, in particular curves general in their gonality stratum \cite{JenRan21, Vog26}, 
ii. \emph{vector bundles} on curves \cite{Tei91,Muk96}, iii. \emph{finite graphs} and their metric variants, i.e. \emph{abstract tropical curves} \cite{Bak08}.  This article is dedicated to the third direction, in particular \emph{Brill-Noether theory on a finite graph}.

In 2008, Baker in his seminal work \cite{Bak08} formulated a Brill-Noether theory on finite and metric graphs. In particular, he laid out conjectures analogous to the existence and non-existence theorems. Baker's \emph{Brill-Noether existence conjecture for finite graphs} \cite[Item (1), Conjecture 3.9]{Bak08}\label{BNex_conj} is the focus of this article and is as follows. 

\begin{conjecture}{{\rm \bf{(Brill-Noether Existence Conjecture for Graphs)}}}
Let $G$ be a finite, undirected, connected, loop-free, multigraph of genus $g$.  For every pair of non-negative integers $r_0,d_0$ satisfying $\rho(g,r_0,d_0) \geq 0$, there is a divisor $D$ on $G$ of degree $d_0$ and rank at least $r_0$ (or equivalently, a divisor of degree at most $d_0$ and rank equal to $r_0$).
\end{conjecture}

Baker proved Brill-Noether existence on \emph{metric graphs} \cite[Theorem 3.20]{Bak08} using its classical counterpart and his specialisation lemma \cite[Lemma 2.8]{Bak08}. Baker \cite[Theorem 3.12]{Bak08} also shows that if arbitrary edge subdivisions are allowed then a corresponding ``weak version" of Brill-Noether existence for finite graphs follows. However, no such technique is known for the conjecture in its original form (Conjecture \ref{BNex_conj}). As Baker points out \cite[Remark 3.13]{Bak08}, obtaining ``combinatorial" proofs, not relying on algebraic curve theory, of Brill-Noether existence even in the case of metric graphs is of high interest. Draisma and Vargas \cite{DraVar19} obtain a combinatorial proof in the case of metric graphs for  (a variant of) $r=1$. We refer to \cite{CooDraPayRob12} for the non-existence part.

The Brill-Noether Existence Conjecture for Graphs (Conjecture \ref{BNex_conj}) is wide open.\footnote{Caporaso \cite{Cap11} claimed a proof of Conjecture \ref{BNex_conj} using algebro-geometric techniques. However, this proof is known to contain a gap \cite{BakJen16}.} 
It has been confirmed for specific families of graphs such as complete graphs \cite{CorLeb16, ABNCDJN26},  cactus graphs \cite{Duo22} and ``low'' genus graphs (up to five)  \cite{AtaRan18}.  There have also been announcements confirming it for wheel and fish graphs \cite{CorDuTra25,BakLeaSmiSol26}. 
The chief difficulty in tacking Brill-Noether existence is the lack of general techniques to lower bound the rank of divisors (of a fixed degree $d_0$) in terms of  $d_0$ and appropriate graph invariants. To the best of our knowledge, all the current techniques rely on \emph{ the ``simplicity" of the Picard group of the graph }and \emph{the theory of reduced divisors} to effectively determine the ranks of all (or sufficiently many) divisor classes. In general, both the Picard group and the theory of reduced divisors seem ``complicated'' from this viewpoint and hence, obstructing progress. 

In this article, we introduce an approach (currently, most effective near the half-canonical degree) to Brill-Noether existence on arbitrary regular graphs via \emph{certain quadratic forms on the Laplacian lattice} $L_G$, i.e. the lattice generated by the rows of the Laplacian matrix of $G$.  This technique is inspired by \emph{a ``geometry of numbers''} \cite{Cas71} approach to Brill-Noether existence for graphs and builds on \cite{Man-crc22}. The author in \cite{Man-crc22} reformulated (the $\mathbb{R}$-divisor version of) Brill-Noether existence as the \emph{covering radius conjecture (CRC}), Conjecture \ref{crc_conj}. The CRC asserts lower bounds on the covering radii of a \emph{periodic set} ${\rm Crit}_{\triangle}(L_G)$ with respect to certain \emph{generalised permutahedra} \cite{Pos09}. This articles studies the CRC in terms of a certain quadratic form on $L_G$ called \emph{the energy quadratic form}. The bilinear pairing underlying this form is called the \emph{energy pairing}  \cite{BakSho13}.
At the heart of our approach is a \emph{Cheeger-like inequality} (Lemma \ref{coveglg}) for the covering radius of ${\rm Crit}_{\triangle}(L_G)$ with respect to the energy quadratic form.  This inequality is in terms of the spectral gap of $G$ and per se, does not involve the Picard group or reduced divisors.  Via this Cheeger inequality, we show \emph{asymptotic versions of Brill-Noether existence near the half-canonical degree} in various natural families of graphs.  These families are: 

\begin{itemize}

\item Spectral expander graphs of fixed even valence. 

\item Almost-Ramanujan graphs of fixed valence (at least five).

\item Almost-Ramanujan graphs of unbounded valence.

\item Regular graphs of a fixed valence at least five, drawn according to a random model. Two random models we treat are the types $\mathcal{U}$ and $\mathcal{I}$. Type $\mathcal{U}$ consists of uniform distributions $\mathcal{U}_{n,k}$ on simple, $k$-regular, connected graphs on $n$ vertices.  Type $\mathcal{I}$ is defined in terms of perfect matchings, we refer to Subsection \ref{expramran_subsect} for more details.

\end{itemize}

{\bf Notation and Terminology:}  Let $I \subseteq \mathbb{N}$ be an infinite set.  This set will index families of objects: typical objects are graphs, divisors and probability distributions. In the case of a family of graphs $\{G_n\}_{n \in I}$ indexed by $I$, we shall assume that $|V(G_n)|=n$. By the term ``graph", we mean a finite, undirected, connected, loop-free graph with possibly multiple edges unless stated otherwise. The genus $g$ of a graph $G$ is its first Betti number a.k.a. cyclomatic number given by $|E(G)|-|V(G)|+1$.

 In the half-canonical degree, the Brill-Noether existence conjecture asserts the existence of a divisor of rank at least $\lfloor \sqrt{g} \rfloor-1$.  This leads to the following asymptotic notions. We say that a family $\mathcal{F}=\{G_n\}_{n \in I}$ of graphs satisfies \emph{asymptotic Brill-Noether existence (ABNE) at half-canonical degree up to a factor $C(n)$} if there exists a sequence $\{D_n\}_{n \in I}$ of divisors $D_n$ on $G_n$ such that the degree of $D_n$ is $g(n)-1$ and rank $r(D_n) \in \Omega{(C(n) \sqrt{g(n)}})$ where $g(n)$ is the genus of $G_n$. In cases where $C(n)$ can be chosen to be a constant, we do not mention the factor explicitly.

A probabilistic version of this notion is as follows.  Let $\{\mathfrak{D}_n\}_{n \in I}$ be a sequence where each $\mathfrak{D}_n$ is a probability distribution on a finite set of regular graphs of valence at least three.  Let $\mathcal{F}=\{G_n\}_{n \in I}$ be a sequence of graphs where each $G_n$ is drawn according to $\mathfrak{D}_n$.   For a real $C>0$,  let $p_C(n)$ be the probability that there exists a divisor $D_n$ on $G_n$ of half-canonical degree and  rank $r(D_n) \geq C \cdot \sqrt{g(n)}$ (conditioned on $G_n$ being connected). We say that $\mathcal{F}$ satisfies \emph{ABNE at half-canonical degree with high probability} if there exists a constant $C>0$ such that $p_C(n) \rightarrow 1$ as $n \rightarrow \infty$.







\begin{theorem}\label{ramranintro_theo}
 Let $\mathcal{F}=\{G_n\}_{n \in I}$ be a family of regular graphs such that $|V(G_n)|=n$ for all $n \in I$ and let $k(n) \geq 3$ be the valence of $G_n$.  The family $\mathcal{F}$ satisfies ABNE at half-canonical degree  
  in the following cases:
 
 \begin{enumerate}
 
 \item \label{exp_cas} If $k(n)=k$ is an even constant (independent of $n$) and  $\mathcal{F}$ is a family of $k$-regular spectral expander graphs.
 
\item\label{ramcon_cas} If $k(n)=k \geq 5$ is a constant and $\mathcal{F}$ is a family of $k$-regular  almost-Ramanujan graphs.
 
 \item \label{ran_cas}  With high probability, if $k(n)=k \geq 5$ is a constant and $\mathcal{F}$  is a family of $k$-regular random graphs of either Type $\mathcal{U}$ or Type $\mathcal{I}$.
 
 \item \label{ramunb_cas}  Up to a factor $1/\sqrt{\log_{k(n)}n}$, if it is a family of almost-Ramanujan graphs of unbounded valence $k(n) \rightarrow \infty$.


 \end{enumerate} 
\end{theorem}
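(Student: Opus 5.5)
The plan is to reduce all four cases to a single lower bound for the covering radius of ${\rm Crit}_{\triangle}(L_G)$ with respect to the energy quadratic form, supplied by the Cheeger-style inequality (Lemma \ref{coveglg}). Then we convert that covering-radius bound into a rank bound through the covering radius reformulation of \cite{Man-crc22} (Conjecture \ref{crc_conj} and its equivalence with the $\mathbb{R}$-divisor version of Brill-Noether existence).

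Concretely, for a $k$-regular graph $G$ on $n$ vertices, $g = n(k-2)/2+1$, so $g \asymp nk$. The rank of a divisor of degree $g-1$ is governed by how far, in the generalised permutahedron norm, a point of the degree-$(g-1)$ hyperplane can lie from ${\rm Crit}_{\triangle}(L_G)$. I would first compare the permutahedral distance with the energy quadratic form. The energy form of $L_G$ restricted to the degree-zero hyperplane is $x^{T} L_G^{+} x$, which is controlled by the spectral gap $\lambda_2$ and by $k$. This gives an inequality of the form: rank at half-canonical degree $\gtrsim$ (energy covering radius) $\times$ (norm comparison factor). Lemma \ref{coveglg} lower bounds the energy covering radius in terms of $\lambda_2$. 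Since $\lambda_2 \geq ck$ for expanders, and $\lambda_2 \geq k-2\sqrt{k-1}-o(1)$ for almost-Ramanujan graphs, I expect the product to come out of order $\sqrt{n k}\asymp\sqrt{g}$ when $k$ is fixed.

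Next I would check each case. For case \ref{exp_cas}, I need the even-valence hypothesis. I would use it to guarantee that a half-canonical divisor class (degree $g-1 = n(k-2)/2$) can be realised by an integral divisor close to the real optimiser, for instance $\tfrac{k-2}{2}\cdot\mathbf{1}$, which is integral precisely when $k$ is even. Then I would round the $\mathbb{R}$-divisor bound to an integral one, losing only a constant. For case \ref{ramcon_cas}, the Ramanujan bound on $\lambda_2$ should make the constant in the Cheeger inequality large enough to survive the rounding loss even for odd $k$. The threshold $k\geq 5$ is where $k-2\sqrt{k-1}$ beats the loss incurred by the parity mismatch; I would verify this numerically, noting $5-2\cdot 2=1>0$ while $k=3,4$ fail. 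Case \ref{ran_cas} then follows from Friedman's theorem, and its analogues for the model $\mathcal{I}$: with high probability a random $k$-regular graph is almost-Ramanujan, so case \ref{ramcon_cas} applies on that event, with connectivity holding w.h.p. For case \ref{ramunb_cas}, the same argument with $k\to\infty$ should give an extra loss from the diameter or norm-comparison factor of order $\sqrt{\log_k n}$. This is because the energy form over distances is controlled by the graph diameter $\asymp \log_k n$, which produces the stated factor $1/\sqrt{\log_{k(n)} n}$.

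The main obstacle is the passage from the energy covering radius to the covering radius in the permutahedral norm, and from $\mathbb{R}$-divisors to integral divisors of exact degree $g-1$. Comparing the quadratic form with the polyhedral norm loses a factor depending on $n$ and $k$. The proof needs this loss to be only a constant when $k$ is fixed, and $\sqrt{\log_k n}$ otherwise. I would first try to bound it by Cauchy--Schwarz on the degree-zero hyperplane, using the eigenvalues of $L_G$. If that loses a power of $n$, I would instead work locally, using balls of radius comparable to the diameter.
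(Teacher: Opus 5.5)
You have the right ingredients (a Cheeger-type energy bound, norm conversion, a parity fix, Friedman's theorem, a diameter bound), but the mechanism that turns these into a rank bound for an actual integral divisor is missing. The gaps are concrete.

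\textbf{Gap 1: the covering radius is the wrong object.} You work with the covering radius, i.e.\ $\max_{\bf p} h_{\mathcal{E}_G,{\rm Crit}_{\triangle}(L_G)}({\bf p})$. A lower bound on it only produces \emph{some} $\mathbb{R}$-divisor of degree $g-1$ and large rank, at an unknown point. Rounding that ``real optimiser'' does not lose ``only a constant'': a rounding vector can move the point by order $n$ in $\ell_1$, and by order $\sqrt{n/\lambda_{\rm min}}$ in the energy metric. Both are as large as the main term. You mention $\frac{k-2}{2}\mathbf{1}$, but you treat it as an approximation of an optimiser rather than as the point itself.

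The missing idea is the pointwise statement at the origin. For regular $G$, $\pi_0(K_G/2)$ is the origin. By Theorem \ref{lapenehol_theo}, ${\rm Crit}_{\triangle}(L_G)$ is exactly the hole set of $(L_G,\mathcal{E}_G)$. Hence Lemma \ref{coveglg} really bounds $h_{\mathcal{E}_G,{\rm Crit}_{\triangle}(L_G)}(O)=\tau_G$, the sh-radius. Proposition \ref{ell1rank_prop} then gives exactly $r(K_G/2)=\frac12\min_{c\in{\rm Crit}_{\triangle}(L_G)}\|c\|_1-1$. So for even $k$ there is nothing to round.

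\textbf{Gap 2: the norm conversion is not an obstacle.} You list comparing $\|c\|_1$ with $\sqrt{\mathcal{E}_G(c)}$ as the main obstacle, but it costs no power of $n$: on $H_0$ one has $\|x\|_1\ge\|x\|_2\ge\sqrt{\lambda_{\rm min}\,\mathcal{E}_G(x)}$, since $\mathcal{E}_G$ has eigenvalues $1/\lambda_i$. This already gives $\Omega(\sqrt n)$ for expanders of fixed valence. For unbounded valence, your diameter remark should become the paper's argument:
\begin{enumerate}
\item $\mathcal{E}_G(e_i-e_j)\le{\rm diam}(G)$, via the Green's function (Corollary \ref{enedia_cor});
\item convexity over the vertices $\frac12(e_i-e_j)$ of the $\ell_1$-ball in $H_0$, which gives $\|x\|_1\ge 2\sqrt{\mathcal{E}_G(x)/{\rm diam}(G)}$;
\item Chung's bound ${\rm diam}(G_n)=O(\log_{k(n)}n)$.
\end{enumerate}
Your treatment of Case \ref{ran_cas} via Friedman's theorem matches the paper.

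\textbf{Gap 3: no construction in the odd case.} This is where integrality genuinely matters. Since odd $k$ forces $n$ even, the paper takes a balanced $S\subset V(G)$ and the divisor $K_G/2+D_S$, with $D_S=\pm\frac12$ on $S$ and $S^c$ respectively. This divisor is integral, has degree exactly $g-1$, and satisfies $\pi_0(K_G/2+D_S)=D_S$.

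The loss is controlled in the \emph{energy} metric through the triangle inequality $h(D_S)\ge h(O)-\sqrt{\mathcal{E}_G(D_S)}$ (Lemma \ref{pwis-triine_lem}), and only afterwards converted to $\ell_1$. Measuring it in $\ell_1$ would cost $\|D_S\|_1=n/2$, which is fatal.

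Because $\sqrt{\mathcal{E}_G(D_S)}$ and $\tau_G\ge\sqrt{n\lambda_{\rm min}}/4$ are both of order $\sqrt n$, the valence threshold comes from comparing constants. It does not come from the sign of $k-2\sqrt{k-1}$, which is positive for every $k\ge 3$. Also, $k=4$ is even and has no parity issue at all, so your statement that ``$k=3,4$ fail'' is not the right explanation.

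When you carry out this comparison, note the following. For an arbitrary balanced $S$, the Rayleigh principle only gives $\mathcal{E}_G(D_S)\le n/(4\lambda_{\rm min})$. The quantity $n/(4\lambda_{\rm max})$ appearing in Lemma \ref{enenorup_lem} is, by the same principle, a \emph{lower} bound for $\mathcal{E}_G(D_S)$. With the valid upper bound, $\tau_G-\sqrt{\mathcal{E}_G(D_S)}$ is guaranteed positive only when $\lambda_{\rm min}>2$, i.e.\ for odd $k\ge 7$ in almost-Ramanujan families. So $k=5$ needs a sharper estimate, for example a well-chosen $S$.
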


Theorem \ref{ramranintro_theo} confirms the predictions of the Brill-Noether existence conjecture, up to a constant, in Cases (\ref{exp_cas}), (\ref{ramcon_cas}) and (\ref{ran_cas}), and up to $C/\sqrt{\log_{k(n)}n}$, for an absolute constant $C$, in Case (\ref{ramunb_cas}).
An informal version of Case (\ref{ran_cas}) for Type $\mathcal{U}$ is as follows: for any fixed $k \geq 5$, almost all simple, $k$-regular, connected graphs satisfy Brill-Noether existence at half-canonical degree up to a constant factor (possibly depending on $k$ but not on $n$).

{\bf Significance of the Half-Canonical Degree:}  The half-canonical degree houses several important divisor classes.  Namely, theta characteristics and the locus $W_{g-1}$ of special divisor classes of degree $g-1$.
These topics are rooted in the classical theory of theta functions \cite[Chapter I]{Mum07}. They have also been studied more recently in the context of algebraic curves \cite[Chapter II]{Mum07} and tropical curves \cite{MikZha08}. Recall that a divisor class $[D]$ (on either a smooth algebraic curve, a tropical curve or a graph) is called a \emph{theta characteristic} if $[2D]$ is the canonical class. One difficultly in handling the half-canonical degree, from the viewpoint of Brill-Noether theory, is that the Riemann-Roch formula yields relatively little information about the rank. For instance, Riemann-Roch yields no information at all about the rank of a theta characteristic. In general, for a divisor $D$ of half-canonical degree, it only says $r(D)=r(K-D)$ where $K$ is an element in the canonical divisor class.  Furthermore, we believe that the half-canonical degree case serves as a ``proof of concept" offering a framework for the Brill-Noether existence conjecture in its full generality.  We refer to Section \ref{bey_sect} for more on this.



 {







\subsection{Proof Technique}\label{prooftech_sect}

Our proof is geometric and is based on the \emph{covering radius conjecture (CRC)}. At the half-canonical degree, the CRC asserts a lower bound of $\sqrt{g}$ on the covering radius of ${\rm Crit}_{\triangle}(L_G)$ with respect to the $\ell_1$-norm.\footnote{equivalently, in its original formulation, the CRC asserts a lower bound of $\frac{\sqrt{g}}{n}$ on the covering radius of ${\rm Crit}_{\triangle}(L_G)$ with respect to the Minkowski sum $\triangle+\bar{\triangle}$ of opposite simplices, see Subsection \ref{laplatdisfun_subsect} for more details.} A direct approach to the CRC is to compute the Voronoi diagram of ${\rm Crit}_{\triangle}(L_G)$ with respect to the  $\ell_1$-norm. However, this currently seems out of reach. The article \cite{Man-crc22} studies the CRC via simplicial distance functions. As remarked in \cite[Page 7]{Man-crc22}, this approach proves the CRC for dense graphs ($g$ quadratic in the number of vertices), it is ineffective for sparse graphs such as regular graphs with a fixed valence.

Instead, we use the energy quadratic form $\mathcal{E}_G$ as a ``proxy" for the $\ell_1$-norm. The involvement of a quadratic form is not apparent in either the Brill-Noether existence conjecture or the CRC. How to sense it then? The \emph{square root} in the CRC suggests this hidden hand of quadratic forms: ``how else can a square root come rather than as a covering radius with respect to a quadratic form?". The next natural question is: Why the \emph{energy} quadratic form? If $G$ is regular, then this quadratic form has the desirable property that the set of holes of $L_G$ with respect to it is precisely ${\rm Crit}_{\triangle}(L_G)$. Determining the Voronoi diagram of ${\rm Crit}_{\triangle}(L_G)$ or even its covering radius with respect to $\mathcal{E}_G$ also seems out of reach. Instead, we derive a \emph{Cheeger-style lower bound} (Lemma \ref{coveglg}) for this covering radius. 

We then employ norm conversion inequalities to derive lower bounds on the covering radius of ${\rm Crit}_{\triangle}(L_G)$ with respect to the $\ell_1$-norm (Section \ref{bnhc_sect}). We use two types of inequalities: i spectral-based for the constant valence case, ii. Poisson equation-based for unbounded valence. These methods, combined with previously known results on expander and Ramanujan graphs (see Subsection \ref{expramran_subsect}), show that the sequence of half-canonical divisors satisfies Theorem \ref{ramranintro_theo}.  
However, in general, the half-canonical divisor is only a $\mathbb{Q}$-divisor. 

 For a $k$-regular graph, it is a ($\mathbb{Z}$)-divisor if and only if $k$ is even. In the odd case, we add another $\mathbb{Q}$-divisor that is ``close enough'' to the half-canonical divisor with respect to the (metric induced by the) energy quadratic form. The result is a sequence of $\mathbb{Z}$-divisors that satisfies Theorem \ref{ramranintro_theo}. These divisors are constructed from vertex subsets of size $|V(G)|/2$, see Subsection \ref{qtoz_sect} for more details. 
 
 Figure \ref{prftec_fig} depicts the main steps in the proof of Theorem  \ref{ramranintro_theo}.

\begin{figure}
\begin{center}
\begin{tikzpicture}[node distance=2cm]
\node (holeener_box) [box] {Characterise the Holes of $(L_G, \mathcal{E}_G)$ (Subsection \ref{lapenehol_subsect})};
\node (che_box) [box, below of=holeener_box] {Cheeger Inequality (Subsection \ref{che_subsect})};
\node (lbr_box) [box, below of=che_box] {Rank of the Half-Canonical Divisor via Norm Conversion (Section \ref{bnhc_sect}, Prop. \ref{halfcanrank_prop} and \ref{halfcanrankdia_prop})};
\node (qtoz_box) [box, below of=lbr_box] {From $\mathbb{Q}$-divisors to $\mathbb{Z}$-divisors (Subsection \ref{qtoz_sect})};
\draw [arrow] (holeener_box) -- (che_box);
\draw [arrow] (che_box) -- (lbr_box);
\draw [arrow] (lbr_box) -- (qtoz_box);
 \end{tikzpicture}
\caption{Main Steps of the Proof of Theorem  \ref{ramranintro_theo}}\label{prftec_fig}
\end{center}
 \end{figure}

\subsection{Applications of Theorem \ref{ramranintro_theo}}

We present the following applications of Theorem \ref{ramranintro_theo}.

{\bf BNE at Nearly Half-Canonical Degrees:}
Theorem \ref{ramranintro_theo} can also handle degrees that are ``nearly" half-canonical. 

\begin{corollary}\label{nhc_cor}
Let $d_0: I \rightarrow \mathbb{N}$, write $d_0(n)=g(n)-1-\psi(n)$ for all $n \in I$. The following cases hold:
\begin{itemize}
\item  If $\psi(n) \in o(\sqrt{g(n)})$ and the family $\mathcal{F}$ satisfies either Item (\ref{exp_cas}), (\ref{ramcon_cas}) or (\ref{ran_cas}), Theorem \ref{ramranintro_theo}, then there exists a sequence $\{D_n\}_{n \in I}$ where $D_n$ is a divisor on $G_n$ with degree $d_0(n)$ and $r(D_n) \in \Omega(\sqrt{g(n)})$.
 \item If  $\psi(n) \in o(\sqrt{g(n)/\log_{k(n)}n})$and $\mathcal{F}$ satisfies Item (\ref{ramunb_cas}), Theorem \ref{ramranintro_theo}, then there exists such a sequence $\{D_n\}_{n \in I}$ with $d_0(n)$ as the degree of $D_n$ and $r(D_n) \in \Omega(\sqrt{g(n)/\log_{k(n)}n})$.
 \end{itemize}
\end{corollary}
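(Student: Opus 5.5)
The plan is to derive the corollary directly from Theorem \ref{ramranintro_theo}, using only the elementary fact that subtracting an effective divisor of degree $m$ drops the rank by at most $m$: for any divisor $D$ and any vertex $v$, $r(D-v)\geq r(D)-1$. This holds by the definition of rank. If every effective divisor $E$ of degree $r(D)$ has $D-E$ equivalent to an effective divisor, then for any effective $E'$ of degree $r(D)-1$ we may take $E=E'+v$, which gives that $(D-v)-E'$ is equivalent to an effective divisor. Iterating, $r(D-E)\geq r(D)-\deg(E)$ for every effective $E$, and this holds even when the right-hand side is negative.

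\textbf{Step 1.} Apply Theorem \ref{ramranintro_theo} to obtain divisors $D'_n$ of degree $g(n)-1$ on $G_n$.
\begin{itemize}
\item In Cases (\ref{exp_cas}) and (\ref{ramcon_cas}), there is a constant $C>0$ with $r(D'_n)\geq C\sqrt{g(n)}$ for all large $n$.
\item In Case (\ref{ran_cas}), the same bound holds with probability $p_C(n)\to 1$, so the conclusion transfers in the same probabilistic sense.
\item In Case (\ref{ramunb_cas}), we get $r(D'_n)\geq C\sqrt{g(n)/\log_{k(n)}n}$.
\end{itemize}

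\textbf{Step 2.} Fix a vertex $v_n$ of $G_n$.
\begin{itemize}
\item If $\psi(n)\geq 0$, set $D_n=D'_n-\psi(n)\,v_n$. This has degree $d_0(n)$, and by the fact above $r(D_n)\geq r(D'_n)-\psi(n)$.
\item If $\psi(n)<0$, set $D_n=D'_n+|\psi(n)|\,v_n$. Adding an effective divisor never decreases rank, so $r(D_n)\geq r(D'_n)$.
\end{itemize}
The statement presumably has $\psi\geq 0$ in mind, but the negative case costs nothing.

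\textbf{Step 3.} Conclude from the growth hypothesis on $\psi$. In the first bullet of the corollary, $\psi(n)\in o(\sqrt{g(n)})$ gives $\psi(n)\leq \tfrac{C}{2}\sqrt{g(n)}$ for large $n$. Hence $r(D_n)\geq \tfrac{C}{2}\sqrt{g(n)}\in\Omega(\sqrt{g(n)})$. The second bullet is identical with $\sqrt{g(n)/\log_{k(n)}n}$ in place of $\sqrt{g(n)}$. Finitely many small $n$ do not affect the asymptotic statement.

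There is no real obstacle here. The only points needing care are the rank-monotonicity fact and, in the random case, reading "there exists a sequence" in the with-high-probability sense inherited from Theorem \ref{ramranintro_theo}.
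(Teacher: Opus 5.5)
Your proposal is correct and is essentially the paper's own argument: the paper likewise obtains $D_n$ by subtracting an effective divisor of degree $\psi(n)$ from the half-canonical-degree divisors of Theorem \ref{ramranintro_theo}, and it notes that the rank drops by at most $\psi(n)$. Your extra care (the proof of $r(D-v)\geq r(D)-1$, the case $\psi(n)<0$, and the with-high-probability reading in Case (\ref{ran_cas})) only fills in details that the paper leaves implicit.
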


The sequence $\{D_n\}_{n \in I}$ in Corollary \ref{nhc_cor} is constructed by subtracting effective divisors $E_n$ of degree $\psi(n)$ from the corresponding sequence obtained from Theorem \ref{ramranintro_theo} and noting that the rank cannot drop by more than $\psi(n)$.  Corollary \ref{nhc_cor} matches the predictions of the Brill-Noether existence conjecture, up to a constant, in Items (\ref{exp_cas}), (\ref{ramcon_cas})  and (\ref{ran_cas}), and up to $C/\sqrt{\log_{k(n)}n}$ in Item (\ref{ramunb_cas}).

{\bf Reversal Systems on Graphs:} Theorem \ref{ramranintro_theo} can be used to lower bound the diameter of certain graphs arising from these dynamical systems. This topic is treated in Section \ref{revsys_sect}.

{\bf Acknowledgements:} We thank the participants of the workshop ``Combinatorial and Algebraic Aspects on Lattice Polytopes'', Kobe, Japan, February 2023 for their interest in this topic. We thank Martin Ulirsch for the valuable discussions in March 2025.

\section{Preliminaries}

In this section, we introduce the key players of this article and set up their notation. We shall use this freely in the rest of the article. Let $G$ be an undirected, connected (multi) graph without loops on $n$ vertices, with $m$ edges and of genus $g$. 

\subsection{Oriented Edges, Cochains and Laplacians}
In the following, we adapt notation from \cite{BacLarNag97}. Let $\mathbb{E}(G)$ be the set of ordered pairs $(u,v)$ and $(v,u)$ for each edge $\{u,v\}$ of $G$. 
The elements of $\mathbb{E}(G)$ are called \emph{oriented edges}. For each oriented edge $e=(u,v) \in \mathbb{E}(G)$, we refer to $(v,u)$ as the \emph{inverse} of $e$ and denote it by $\bar{e}$. 

For a commutative ring $A$ with unity, let the \emph{$0$-cochains} $C^0(G,A)$ be the free $A$-module of all functions $f:V(G) \rightarrow A$ and let the \emph{$1$-cochains} $C^1(G,A)$ be the free $A$-module of all functions $h: \mathbb{E}(G) \rightarrow A$ such that $h(\bar{e})=-h(e)$ for all oriented edges.  For both $C^0(G,A)$ and  $C^1(G,A)$, the indicator functions at the vertices and the oriented edges~\footnote{an indicator function at $e$ takes value one at $e$, minus one at $\bar{e}$ and zero at all other oriented edges.}: one for each edge, respectively form a basis, and are orthonormal with respect to the inner products $\langle f_1, f_2 \rangle_v=\sum_{v \in V(G)}f_1(v)\cdot f_2(v)$ and $\langle h_1,h_2 \rangle_e=(\sum_{e \in \mathbb{E}(G)} h_1(e) \cdot h_2(e))/2$.  

The \emph{exterior differential} $\delta: C^0(G,A) \rightarrow C^{1}(G,A)$ is given by  $(\delta(f))(e)=f(e_+)-f(e_-)$ where $e_+$ and $e_-$ are the head and tail vertices of the oriented edge $e$, respectively. Note that $(\delta(f))(\bar{e})=-(\delta(f))(e)$ for every oriented edge $e$ and this makes $\delta$ well-defined. Its adjoint $\delta^*: C^1(G,A) \rightarrow C^0(G,A)$ is given by $(\delta^*(h))(v)=\sum_{e \in \mathbb{E}(G)|~e_+=v} h(e)$. 

\begin{definition}{\rm {\bf(Laplacian Operator)}} 
The Laplacian operator is the composite $\delta^* \circ \delta: C^0(G,A) \rightarrow C^0(G,A)$ and is denoted by $\Delta$.
\end{definition}
 Recall the formula $(\Delta(f))(v)=\sum_{(v,u) \in \mathbb{E}(G)} (f(v)-f(u))={\rm val}(v) \cdot f(v)- \sum_{(v,u) \in \mathbb{E}(G)} f(u)$ where ${\rm val}(.)$ is the valence.  

\subsection{The Laplacian Lattice and Distance Functions}\label{laplatdisfun_subsect}  Recall that the \emph{Laplacian matrix} is the matrix of the Laplacian operator with $A=\mathbb{R}$ and with respect to the standard basis on $C^0(G,\mathbb{R})$.

\begin{definition}{\rm {\bf(Laplacian Lattice)}} 
The Laplacian lattice $L_G$ of $G$ is the lattice generated by the rows (or equivalently, the columns) of the Laplacian matrix of $G$.
\end{definition}

Since the row sum of the Laplacian matrix is zero, the Laplacian lattice $L_G$ is contained in the hyperplane $H_0=(1,\dots,1)^{\perp}$. Furthermore, it is a finite index sublattice of  the root lattice $A_{n-1}$ and $[A_{n-1}:L_G]=c_G$ where 
$c_G$ is the number of spanning trees of $G$.

A geometric approach to Riemann-Roch and Brill-Noether theories in the graph theoretic setting involves Laplacian latttices equipped with suitable distance functions \cite{AmiMan10,Man-sim13, Man-crc22}. Let $\mathfrak{C}$ be a convex body (compact, convex set of full dimension) in $H_0$ and containing the origin in its interior. Let $H_d$ be the hyperplane \begin{center}$\{(p_1,\dots,p_n)|~\sum_{i=1}^{n}p_i=d\}$. \end{center}
Fix a real number $d$, the distance function $d_{\mathfrak{C}}:H_d \times H_d \rightarrow \mathbb{R}_{\geq 0}$ induced by $\mathfrak{C}$ is
\begin{center}
$d_{\mathfrak{C}}({\bf p_1},{\bf p_2})={\rm inf} \{ \mu|~ {\bf p_2} \in {\bf p_1}+ \mu \cdot \mathfrak{C}\}$.
\end{center}
The function $d_{\mathfrak{C}}$ satisfies all properties of a metric except possibly symmetry, i.e. $d_{\mathfrak{C}}({\bf p_1},{\bf p_2})=d_{\mathfrak{C}}({\bf p_2},{\bf p_1})$.  

Chief examples of such convex bodies featuring in this article are the following:

\begin{enumerate}
\item The standard simplices $\triangle$ and $\bar{\triangle}$. They are defined as $\triangle={\rm CH}({\bf b_1},\dots,{\bf b_n})$ where ${\rm CH}(.)$ is the convex hull, each ${\bf b_i}=(n-1) \cdot {\bf e_i}-\sum_{j \neq i} {\bf e_j}$ and ${\bf e_k}$ is the $k$-th standard basis vector. The standard simplex $\bar{\triangle}$ is defined as $-\triangle$.

\item Their scaled Minkowski sum $P_{\alpha,\bar{\alpha}}:=\alpha \cdot \triangle + \bar{\alpha} \cdot \bar{\triangle}$ for non-negative reals $\alpha,\bar{\alpha}$. These are instances of generalised permutahedra.

\item Ellipsoid given as the unit ball of a quadratic form. 
\end{enumerate}

Let $\mathcal{L}$ be a lattice in $H_0$ of full rank. The lattice $\mathcal{L}$ acts on $H_d$ via translation. Let $T$ be a subset of $H_d$, for some real $d$, that is discrete with respect to the Euclidean topology and  inheriting the translation action of  $\mathcal{L}$. The distance function $h_{\mathfrak{C},T}: H_d \rightarrow \mathbb{R}_{\geq 0}$ on $H_d$ with respect to $\mathfrak{C}$ and $T$ is defined as
\begin{center}
$h_{\mathfrak{C},T}({\bf p})={\rm min}_{{\bf r} \in T} d_{\mathfrak{C}}({\bf p},{\bf r})$.
\end{center}

The discreteness of $T$ ensures that $h_{\mathfrak{C},T}$ is well-defined.  

\begin{lemma}\label{pwis-triine_lem}
Let $d_{\mathfrak{C}_1}$ and $d_{\mathfrak{C}_2}$ be distance functions and let ${\bf p} \in H_d$.  The following properties hold: 
\begin{enumerate}
\item\label{pwis_it} If $d_{\mathfrak{C}_1}({\bf p}, {\bf r}) \geq d_{\mathfrak{C}_2}({\bf p}, {\bf r})$ for every ${\bf r} \in T$, then $h_{\mathfrak{C}_1,T}({\bf p}) \geq h_{\mathfrak{C}_2,T}({\bf p}) $. 

\item\label{triine_it}  {\rm {\bf (Triangle Inequality)}} For any ${\bf p},{\bf q}  \in H_d$, $h_{\mathfrak{C},T}({\bf p}) \leq h_{\mathfrak{C},T}({\bf q})+d_{\mathfrak{C}}({\bf p}, {\bf q})$.

\end{enumerate}
\end{lemma}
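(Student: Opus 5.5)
Both items follow directly from the definition of $h_{\mathfrak{C},T}$ as a minimum over the discrete set $T$, together with the fact that $d_{\mathfrak{C}}$ obeys the triangle inequality. The plan is to treat the two items separately.

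For Item (\ref{pwis_it}), fix ${\bf p} \in H_d$. Because $T$ is discrete and $\mathcal{L}$-periodic, the minimum defining $h_{\mathfrak{C}_1,T}({\bf p})$ is attained; pick ${\bf r_1} \in T$ with $h_{\mathfrak{C}_1,T}({\bf p})=d_{\mathfrak{C}_1}({\bf p},{\bf r_1})$. By hypothesis $d_{\mathfrak{C}_1}({\bf p},{\bf r_1}) \geq d_{\mathfrak{C}_2}({\bf p},{\bf r_1})$. The right-hand side is at least $\min_{{\bf r}\in T} d_{\mathfrak{C}_2}({\bf p},{\bf r})=h_{\mathfrak{C}_2,T}({\bf p})$, which gives the claim. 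If one prefers not to rely on attainment, the same argument with infima works verbatim: the pointwise inequality of the functions ${\bf r}\mapsto d_{\mathfrak{C}_i}({\bf p},{\bf r})$ on $T$ passes to their infima.

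For Item (\ref{triine_it}), first record the triangle inequality for $d_{\mathfrak{C}}$, which the paper asserts but which I would check quickly. Suppose ${\bf q} \in {\bf p}+\mu\mathfrak{C}$ and ${\bf r} \in {\bf q}+\nu\mathfrak{C}$. Then ${\bf r} \in {\bf p}+\mu\mathfrak{C}+\nu\mathfrak{C}={\bf p}+(\mu+\nu)\mathfrak{C}$ by convexity of $\mathfrak{C}$. Taking infima gives $d_{\mathfrak{C}}({\bf p},{\bf r}) \leq d_{\mathfrak{C}}({\bf p},{\bf q})+d_{\mathfrak{C}}({\bf q},{\bf r})$. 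Note the order: the first leg runs from ${\bf p}$ to ${\bf q}$ and the second from ${\bf q}$ to ${\bf r}$. This matters because $d_{\mathfrak{C}}$ need not be symmetric.

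Now choose ${\bf r_q}\in T$ with $h_{\mathfrak{C},T}({\bf q})=d_{\mathfrak{C}}({\bf q},{\bf r_q})$. Then
\[
h_{\mathfrak{C},T}({\bf p}) \leq d_{\mathfrak{C}}({\bf p},{\bf r_q}) \leq d_{\mathfrak{C}}({\bf p},{\bf q})+d_{\mathfrak{C}}({\bf q},{\bf r_q}) = d_{\mathfrak{C}}({\bf p},{\bf q})+h_{\mathfrak{C},T}({\bf q}),
\]
which is exactly the stated inequality. The only delicate point is this asymmetry: the statement has $d_{\mathfrak{C}}({\bf p},{\bf q})$ with ${\bf p}$ first, and the chaining ${\bf p}\to{\bf q}\to{\bf r_q}$ produces precisely that orientation. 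No step poses a real obstacle. The main thing to verify is that this orientation convention agrees with the definition $d_{\mathfrak{C}}({\bf p_1},{\bf p_2})=\inf\{\mu : {\bf p_2}\in{\bf p_1}+\mu\mathfrak{C}\}$, and it does.
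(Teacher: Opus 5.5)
Your proof is correct. The pointwise comparison of ${\bf r}\mapsto d_{\mathfrak{C}_i}({\bf p},{\bf r})$ on $T$ passes to the minimum, and the triangle inequality for $d_{\mathfrak{C}}$ follows from $\mu\mathfrak{C}+\nu\mathfrak{C}=(\mu+\nu)\mathfrak{C}$ (convex $\mathfrak{C}$, $\mu,\nu\geq 0$), with the chain ${\bf p}\to{\bf q}\to{\bf r}$ giving exactly the orientation $d_{\mathfrak{C}}({\bf p},{\bf q})$ that the asymmetric definition requires. The paper states this lemma without proof, treating both items as immediate from the definitions, and your argument is the routine verification it leaves implicit.
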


The function $h_{\mathfrak{C},T}$ is an $\mathcal{L}$-periodic, continuous function. Hence, it induces a continuous function on the topological torus $H_d/\sim_\mathcal{L}$. Thus, it is bounded and attains its infimum and supremum. 
Its minimum is zero and is attained on the set $T$. Its maximum is called the \emph{covering radius} of $T$ with respect to $\mathfrak{C}$.

\begin{definition}{\rm {\bf(Covering Radius)}} 
The maximum value of the function $h_{\mathfrak{C},T}: H_d \rightarrow \mathbb{R}_{\geq 0}$ is called the covering radius of $T$ with respect to $\mathfrak{C}$ and will be denoted by ${\rm Cov}_{\mathfrak{C}}(T)$.
\end{definition}
In cases where $\mathfrak{C}$ is given implicitly as a unit ball of either a metric $||.||$ or a quadratic form $Q$, we also use the notations ${\rm Cov}_{||.||}(T),~h_{||.||,T}$ and ${\rm Cov}_{Q}(T),~h_{Q,T}$, respectively.
The set of \emph{local maxima} of $h_{\mathfrak{C},T}$ is denoted by ${\rm Crit}_{\mathfrak{C}}(T)$. The elements in ${\rm Crit}_{\mathfrak{C}}(T)$ are called the \emph{holes of $(\mathfrak{C},T)$}. The case when $\mathfrak{C}$ is a standard simplex and $T=L_G$ plays a central role in this article. In particular, lower bounding the covering radius of ${\rm Crit}_{\triangle}(L_G)$ with respect to $P_{\alpha,\bar{\alpha}}$ (for certain $\alpha,\bar{\alpha}$) is equivalent to the $\mathbb{R}$-divisor version of Brill-Noether existence. This lower bound is the \emph{covering radius conjecture} (CRC).

\begin{conjecture}{\rm {\bf (CRC)}}\label{crc_conj}
Let $G$ be a graph of genus $g \geq 1$.  For positive reals $\alpha,\bar{\alpha}$ such that $\alpha/{\bar{\alpha}} \in [1/g,g]$, the covering radius ${\rm Cov}_{P_{\alpha,\bar{\alpha}}}({\rm Crit}_{\triangle}(L_G)) \geq \dfrac{\sqrt{g}}{n \cdot \sqrt{\alpha \cdot \bar{\alpha}}}$.
\end{conjecture}

\begin{remark} {\rm {\bf (Symmetries of the CRC)}} 
\rm
 The predicted lower bound remains invariant under  $\alpha \leftrightarrow \bar{\alpha}$. By the \emph{reflection invariance} of $L_G$ \cite[Definition 5.1]{AmiMan10}, ${\rm Crit}_{\bar{\triangle}}(L_G)$ is a translate of ${\rm Crit}_{\triangle}(L_G)$ and hence, their covering radii, with respect to any fixed distance function, are equal. \qed
\end{remark}


By \cite[Item (ii), Theorem 6.9]{AmiMan10}, the set ${\rm Crit}_{\triangle}(L_G)$ has the following combinatorial description.  Let $\mathcal{N}_G$ be the set of \emph{non-special} divisors on $G$, i.e. divisors of degree $g-1$ and rank minus one.   In the following, we regard $\mathcal{N}_G$
as a subset of $H_{g-1}$ and for $d \in \mathbb{R}$, let $\pi_d$ be the projection map $\pi_d({\bf p})={\bf p}-(\frac{\sum_i p_i-d}{n})\cdot (1,\dots,1)$ where ${\bf p}=(p_1,\dots,p_n)$.

\begin{theorem}\label{critnonspe_theo}\cite[Item (ii), Theorem 6.9]{AmiMan10} 
The set ${\rm Crit}_{\triangle}(L_G)$ is equal to $-\pi_0(\mathcal{N}_G)$.   If $G$ is regular, then ${\rm Crit}_{\triangle}(L_G)=\pi_0(\mathcal{N}_G)$.
\end{theorem}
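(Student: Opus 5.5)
The first assertion, ${\rm Crit}_{\triangle}(L_G)=-\pi_0(\mathcal{N}_G)$, is \cite[Item (ii), Theorem 6.9]{AmiMan10}, and I would take it as given. The plan is to deduce the regular case from it using the Riemann--Roch theorem for graphs. The idea is that $\mathcal{N}_G$ is symmetric under $D\mapsto K-D$, and that for a regular graph $K$ is a multiple of the all-ones vector, so $\pi_0$ kills it.

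\emph{Step 1 (Riemann--Roch symmetry).} Let $K$ be the canonical divisor, $K(v)={\rm val}(v)-2$. Let $D$ have degree $g-1$. Then $K-D$ also has degree $2g-2-(g-1)=g-1$. Riemann--Roch gives $r(D)-r(K-D)=\deg(D)-g+1=0$, so $r(D)=-1$ if and only if $r(K-D)=-1$. Hence the map $D\mapsto K-D$ is an involution of $\mathcal{N}_G$, and $K-\mathcal{N}_G=\mathcal{N}_G$.

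\emph{Step 2 (linearity of the projection).} The map $\pi_0$ is the orthogonal projection onto $H_0$ along $(1,\dots,1)$, so it is linear: $\pi_0(K-D)=\pi_0(K)-\pi_0(D)$. If $G$ is $k$-regular, then $K=(k-2)\cdot(1,\dots,1)$, and therefore $\pi_0(K)=0$. This gives $-\pi_0(D)=\pi_0(K-D)$ for every divisor $D$.

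\emph{Step 3 (conclusion).} Combining the two steps,
\[
{\rm Crit}_{\triangle}(L_G)=-\pi_0(\mathcal{N}_G)=\pi_0(K-\mathcal{N}_G)=\pi_0(\mathcal{N}_G).
\]

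The only substantive input is the first assertion itself, which I am taking from \cite{AmiMan10}. If one wanted to reprove it, the key step would be the following characterisation of the local maxima of $h_{\triangle,L_G}$. One identifies non-effective classes, i.e.\ points outside $L_G+\mathbb{Z}_{\geq 0}^n$, with points far from $L_G$ in the simplicial distance. The extremal such points are the maximal non-effective divisors, and these are exactly the non-special divisors of degree $g-1$. This is the main obstacle, and it is where the reflection invariance of $L_G$ enters, which accounts for the sign in $-\pi_0$. The regular case then follows formally from Steps 1--3.
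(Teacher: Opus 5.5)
Your argument is correct, and it matches the paper, which simply cites \cite[Item (ii), Theorem 6.9]{AmiMan10} and leaves the regular case implicit; that case rests on the same involution $\nu\mapsto K_G-\nu$ of $\mathcal{N}_G$ (combinatorially $K_G=D_{\mathcal{O}}+D_{\bar{\mathcal{O}}}$, i.e.\ reflection invariance of $L_G$) together with $\pi_0(K_G)=0$ for regular $G$. One small correction to your closing aside: the minus sign in $-\pi_0(\mathcal{N}_G)$ is a convention coming from measuring with $\triangle$ rather than $\bar{\triangle}$ (indeed ${\rm Crit}_{\bar{\triangle}}(L_G)=-{\rm Crit}_{\triangle}(L_G)=\pi_0(\mathcal{N}_G)$ since $L_G=-L_G$), and reflection invariance is what \emph{removes} the sign up to the translate $\pi_0(K_G)$, not what produces it.
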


\subsection{Lattices of Integral Cuts and Integral Flows}

Next, we recall the \emph{lattice of integral cuts} that will play a key role in this story and the \emph{lattice of integral flows}.


\begin{definition}{\rm {\bf (Lattices of Integral Cuts and Integral Flows)}}\cite{BacLarNag97}
The lattice of integral cuts $N^1_G$ is the image of the exterior differential $\delta$ with $A=\mathbb{Z}$, i.e. the image of $\delta:C^{0}(G,\mathbb{Z}) \rightarrow C^1(G,\mathbb{Z})$. The lattice of integral flows $\Lambda^1_G$ is the kernel of  $\delta^*: C^1(G,\mathbb{Z}) \rightarrow C^0(G,\mathbb{Z})$.
\end{definition}

The following proposition records basic properties of the two lattices (\cite[Propositions 1 and 2]{BacLarNag97}).  

\begin{proposition}
The lattices $N^1_G$ and $\Lambda^1_G$  are both sublattices of $C^1(G,\mathbb{Z})$ of rank $n-1$ and $g$, respectively.  The volumes of $N^1_G$ and $\Lambda^1_G$ are both equal to  $\sqrt{c_G}$ (recall that $c_G$ is the number of spanning trees). The two lattices are orthogonal complements of each other with respect to the inner product $\langle,\rangle_e$, i.e. $N^1_G=(\Lambda^1_G)^{\perp} \cap C^1(G,\mathbb{Z})$ and $\Lambda^1_G=(N^1_G)^{\perp} \cap C^1(G,\mathbb{Z})$. 
\end{proposition}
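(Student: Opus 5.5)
We need to prove the proposition: $N^1_G$ and $\Lambda^1_G$ have ranks $n-1$ and $g$, both have volume $\sqrt{c_G}$, and each is the orthogonal complement of the other inside $C^1(G,\mathbb{Z})$ with respect to $\langle,\rangle_e$.

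\textbf{Ranks.} The map $\delta\colon C^0(G,\mathbb{Z})\to C^1(G,\mathbb{Z})$ has kernel the constant functions, since $G$ is connected. Hence $N^1_G\cong \mathbb{Z}^n/\mathbb{Z}\mathbf{1}$, which is free of rank $n-1$. The map $\delta^*$ has image contained in the sum-zero functions. Over $\mathbb{Q}$ this image has dimension $n-1$, because $\delta^*$ and $\delta$ are adjoint and so have equal rank. Therefore $\ker\delta^*$ has rank $m-(n-1)=g$.

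\textbf{Orthogonality.} The identity $\langle \delta f,h\rangle_e=\langle f,\delta^*h\rangle_v$ shows that $N^1_G\otimes\mathbb{R}$ and $\Lambda^1_G\otimes\mathbb{R}$ are orthogonal complements in $C^1(G,\mathbb{R})$, by the rank count.

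The inclusion $\Lambda^1_G=(N^1_G)^\perp\cap C^1(G,\mathbb{Z})$ is immediate, since $h\perp \mathrm{im}\,\delta$ iff $\delta^*h=0$.

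The harder equality is $N^1_G=(\Lambda^1_G)^\perp\cap C^1(G,\mathbb{Z})$, i.e.\ that every integral cochain orthogonal to all integral flows is $\delta$ of an integral function. Fix a spanning tree $T$ and an integral $h$ orthogonal to every integral flow. Define $f$ by $f(v_0)=0$ and integrate $h$ along the paths of $T$; this $f$ is integral. For any non-tree edge $e$, its fundamental cycle is an integral flow, and orthogonality to it gives $h(e)=f(e_+)-f(e_-)$. Thus $h=\delta f$ with $f\in C^0(G,\mathbb{Z})$, so $h\in N^1_G$.

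\textbf{Volumes.} The volume of $N^1_G$ is $\sqrt{\det}$ of its Gram matrix in a basis. Take the basis $\delta(\chi_v)$ for $v\neq v_0$, where $\chi_v$ is the indicator function of $v$. This is a basis because $N^1_G\cong\mathbb{Z}^n/\mathbb{Z}\mathbf{1}$ and the classes of $\chi_v$, $v\ne v_0$, generate that quotient freely. The Gram matrix is the reduced Laplacian, whose determinant is $c_G$ by the matrix-tree theorem. So $\mathrm{vol}(N^1_G)=\sqrt{c_G}$.

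For $\Lambda^1_G$, we use that $C^1(G,\mathbb{Z})$ is unimodular. In a unimodular lattice, a primitive sublattice and its orthogonal complement have equal covolumes. Both $N^1_G$ and $\Lambda^1_G$ are primitive, since each is an orthogonal complement intersected with the lattice, as shown above. Hence $\mathrm{vol}(\Lambda^1_G)=\mathrm{vol}(N^1_G)=\sqrt{c_G}$.

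Alternatively, take the basis of fundamental cycles of $T$. These span $\Lambda^1_G$: subtracting the appropriate integer multiples of fundamental cycles from a flow kills all non-tree edges, and a flow supported on a tree is zero. The Gram determinant is then $\det(I+BB^T)=\det(I+B^TB)$, which the Cauchy–Binet formula evaluates as the number of spanning trees, again giving $c_G$.

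The equal-covolume lemma for unimodular lattices can be taken as standard, or checked via the Cauchy–Binet route above.
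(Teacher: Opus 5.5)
Your proof is correct. The paper does not actually prove this proposition; it only cites \cite[Propositions 1 and 2]{BacLarNag97}. So your argument is a self-contained replacement for that citation rather than a variant of a proof in the text.

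Your route uses the standard ingredients:
\begin{itemize}
\item adjointness of $\delta$ and $\delta^*$, for the real orthogonality and the easy inclusion $\Lambda^1_G=(N^1_G)^\perp\cap C^1(G,\mathbb{Z})$;
\item integration along a spanning tree, for the integral equality $N^1_G=(\Lambda^1_G)^\perp\cap C^1(G,\mathbb{Z})$. This is equivalent to torsion-freeness of $C^1(G,\mathbb{Z})/\delta C^0(G,\mathbb{Z})\cong H^1(G,\mathbb{Z})$;
\item Kirchhoff's matrix-tree theorem, for the Gram matrix of $N^1_G$ in the basis $\{\delta(\chi_v)\}_{v\neq v_0}$;
\item equality of determinants of complementary primitive sublattices of the unimodular lattice $C^1(G,\mathbb{Z})$, to transfer the volume to $\Lambda^1_G$.
\end{itemize}

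The citation buys brevity. Your version makes explicit that the tree-integration step (primitivity of $N^1_G$) is exactly what the volume transfer needs.

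One caveat concerns your alternative Cauchy--Binet route. Evaluating $\det(I+BB^T)$ as $c_G$ needs an extra fact: the maximal minors of $[I\mid B]$ are $\pm1$ exactly when the complementary columns form a spanning tree, and $0$ otherwise. This is total unimodularity of the fundamental cycle matrix, which you did not state. Your main argument does not rely on it, so this is not a gap.
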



\subsection{ The Energy Quadratic Form}
Recall that $A$ is a commutative ring with unity. Let ${\rm Div}(G,A)$ be the free $A$-module generated by the symbols $(v)$ over all vertices $v \in V(G)$. A typical element in ${\rm Div}(G,A)$ is $\sum_{v \in V(G)} \alpha_v(v)$ where $\alpha_v \in A$ and is called an \emph{$A$-divisor}. The \emph{degree} homomorphism ${\rm deg}:{\rm Div}(G,A) \rightarrow A$ takes $\sum_{v \in V(G)} \alpha_v(v)$ to $\sum_{v \in V(G)} \alpha_v$.
For an element $d \in A$, define the subgroup ${\rm Div}^d(G,A)$ of ${\rm Div}(G,A)$ given by 
$\{ \sum_{v \in V(G)} \alpha_v (v) |~ \sum_{v \in V(G)}\alpha_v=d\}$. The free modules ${\rm Div}(G,A)$ and $C^0(G,A)$ are dual to each other and via the isomorphism given by their standard bases, we regard $\delta^{*}$ as a map from $C^1(G,A)$ to  ${\rm Div}(G,A)$. Furthermore, the image of $\delta^{*}$ is contained in ${\rm Div}^0(G,A)$. In the case when $A=\mathbb{R}$, the map $\delta^{*}$ when restricted to the cut space of $G$, namely $N^1_G \otimes_{\mathbb{Z}} \mathbb{R}$,  induces an isomorphism 
$\delta^{*}_{\rm res}:N^1_G \otimes_{\mathbb{Z}} \mathbb{R} \rightarrow  {\rm Div}^{0}(G,\mathbb{R})$.  

\begin{definition}{\rm {\bf (Energy Quadratic Form)}}\cite[Section 3.3]{BakSho13}
The energy quadratic form $\mathcal{E}_G$ on ${\rm Div}^{0}(G,\mathbb{R})$ is the pullback of the squared Euclidean norm $||.||_e^2$ on $C^{1}(G,\mathbb{R})$ along the map $(\delta^{*}_{\rm res})^{-1}: {\rm Div}^{0}(G,\mathbb{R}) \rightarrow N^1_G \otimes_{\mathbb{Z}} \mathbb{R}$.  
\end{definition}

Hence, $\mathcal{E}_G(D)=||(\delta^{*}_{\rm res})^{-1}(D)||_e^2$ for all $D \in {\rm Div}^{0}(G,\mathbb{R})$. Baker and Shokrieh \cite{BakSho13} formulate the energy quadratic form (actually, the energy pairing) in terms of inverse Laplacians.
They construct the energy quadratic form in terms of the Laplacian matrix. In terms of the Laplacian operator, their construction is as follows. The image of the Laplacian operator $\Delta: {\rm Div}(G,\mathbb{R}) \rightarrow {\rm Div}(G,\mathbb{R})$ is precisely ${\rm Div}^0(G,\mathbb{R})$. Furthermore, $\Delta$ induces an isomorphism $\Delta_{\rm res}$ when restricted to ${\rm Div}^0(G,\mathbb{R})$. Hence, $\Delta_{\rm res}^{-1}: {\rm Div}^0(G,\mathbb{R}) \rightarrow {\rm Div}^0(G,\mathbb{R})$ is well-defined. 
The energy quadratic form  $\mathcal{E}_G$ is defined as $\mathcal{E}_G(D)=\langle D, \Delta_{\rm res}^{-1}(D) \rangle_v$. The equivalence of these two definitions follows from the adjointness of $\delta$ and $\delta^{*}$ along with the definition of the Laplacian operator. 



{\bf Lattice-Metric Pairs:} Typically, a (Euclidean) lattice is defined as a discrete subgroup of Euclidean space. Implicit in this definition is a positive definite quadratic form: the squared Euclidean norm.  For the approach to Brill-Noether theory taken in this article, 
\emph{the Laplacian-energy pair}, i.e. the Laplacian lattice with respect to the energy quadratic form is central. Hence, in order to spell out the lattice and the underlying quadratic form explicitly, we formulate the notion of \emph{lattice-metric pairs}. 

Let $V_1,V_2$ be finite dimensional real vector spaces and let $\Lambda_1, \Lambda_2$ be lattices in $V_1,V_2$ respectively. Furthermore, let $Q_1,Q_2$ be positive definite quadratic forms on $V_1,V_2$ respectively. 
The pair $(V_i,Q_i)$ is usually referred as a \emph{quadratic space} in literature \cite[Chapter 2]{Cas08}. The quadratic spaces $(V_1,Q_1)$ and $(V_2,Q_2)$ are said to be \emph{isometric} if there is an isomorphism  $\sigma: V_1 \rightarrow V_2$ such that $Q_1({\bf x})=Q_2(\sigma({\bf x}))$ for all ${\bf x}\in V_1$. The pair $(\Lambda_i,Q_i)$ is called a \emph{lattice-metric pair}, in short L-M pair. 

\begin{definition}{\rm {\bf (Isomorphism of L-M Pairs)}}
Lattice-metric pairs $(\Lambda_1,Q_1)$ and $(\Lambda_2,Q_2)$ are said to be isomorphic if there is an isomorphism between $V_1$ and $V_2$ that induces a group isomorphism between $\Lambda_1$ and  $\Lambda_2$, and an isometry between the quadratic spaces $(V_1,Q_1)$ and $(V_2,Q_2)$.
\end{definition}

The \emph{sh-radius} (``shallow hole radius") of an L-M  pair $(\Lambda,Q)$ is the minimum norm $\sqrt{Q(c)}$ over all holes $c$ of $(\Lambda,Q)$, i.e. over all local maxima of $h_{Q,\Lambda}$.

Isomorphic lattice-metric pairs have congruent Voronoi cells. Hence, they have the same metric invariants such as covering radius and packing radius. In particular, the equality of the sh-radius will be important for this article and hence, we record it.

\begin{proposition} \label{sheq_prop}
Isomorphisms of lattice-metric pairs preserve the set of holes and also the set of holes supported at the origin. In particular, 
isomorphic lattice-metric pairs have the same sh-radius. 
\end{proposition}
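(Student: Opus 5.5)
The plan is to make the isomorphism explicit and push the function $h_{Q_1,\Lambda_1}$ forward along it. Let $\sigma: V_1 \rightarrow V_2$ be a linear isomorphism with $\sigma(\Lambda_1)=\Lambda_2$ and $Q_2(\sigma({\bf x}))=Q_1({\bf x})$ for all ${\bf x}\in V_1$. The first step is to note that $\sigma$ maps the unit ball of $Q_1$ onto the unit ball of $Q_2$. Since $\sigma$ is linear, this gives, for all ${\bf p},{\bf r}\in V_1$,
\[ d_{Q_2}(\sigma({\bf p}),\sigma({\bf r}))=d_{Q_1}({\bf p},{\bf r}). \]
Concretely, both sides equal $\sqrt{Q_1({\bf r}-{\bf p})}$.

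Next I will take the minimum over lattice points. Because $\sigma$ restricts to a bijection $\Lambda_1\rightarrow\Lambda_2$, minimizing over ${\bf r}\in\Lambda_1$ on one side is the same as minimizing over $\sigma({\bf r})\in\Lambda_2$ on the other. This yields the identity
\[ h_{Q_2,\Lambda_2}\circ\sigma = h_{Q_1,\Lambda_1} \]
on all of $V_1$.

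The third step transfers local maxima. A linear isomorphism of finite-dimensional real vector spaces is a homeomorphism, so it maps neighbourhoods to neighbourhoods. Hence ${\bf c}$ is a local maximum of $h_{Q_1,\Lambda_1}$ if and only if $\sigma({\bf c})$ is a local maximum of $h_{Q_2,\Lambda_2}$. So $\sigma$ restricts to a bijection between the sets of holes of the two pairs.

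For the holes "supported at the origin", I interpret these as holes that are vertices of the Voronoi cell of the origin, i.e. holes ${\bf c}$ with $h_{Q,\Lambda}({\bf c})=\sqrt{Q({\bf c})}$, so that $0$ is a nearest lattice point. Both sides of this equation are preserved by $\sigma$: the left by the identity above, the right by the isometry condition together with $\sigma(0)=0$. So this subset is also carried bijectively onto its counterpart.

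Finally, the sh-radius is the minimum of $\sqrt{Q(c)}$ over holes $c$. Since $\sigma$ is a bijection on holes and preserves $Q$, the two minima range over the same set of values and therefore coincide.

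I do not expect a genuine obstacle. The only point needing care is making precise what "supported at the origin" means, and checking that the chosen meaning is expressed purely in terms of $h$, $Q$ and the origin, so that it is evidently invariant under $\sigma$.
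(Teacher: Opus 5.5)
Your proof is correct and is essentially the paper's argument made explicit. The paper justifies the proposition only by remarking that isomorphic lattice-metric pairs have congruent Voronoi cells; your identity $h_{Q_2,\Lambda_2}\circ\sigma=h_{Q_1,\Lambda_1}$ is the precise form of that remark, and it transfers holes directly in the paper's own sense of local maxima of $h$. Your reading of ``supported at the origin'' (the origin is a nearest lattice point, i.e.\ $h(\mathbf{c})=\sqrt{Q(\mathbf{c})}$) also matches the paper's usage in Proposition~\ref{latcuthol_prop}, where these holes are the circumcentres of maximal Delaunay simplices having the origin as a vertex.
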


The following isomorphism of L-M pairs will be important  to this approach. It will be used to translate problems on the Laplacian lattice $L_G$ with $\sqrt{\mathcal{E}_G}$ as the underlying metric to corresponding ones on $N^1_G$ with respect to the Euclidean metric.

\begin{proposition}\label{lapcutiso_prop}
The lattice-metric pairs $(L_G,\mathcal{E}_G)$ and $(N^1_G,||.||_e^2)$ are isomorphic via $\delta^{*}_{\rm res}: N^1_G \otimes_{\mathbb{Z}} \mathbb{R} \rightarrow  {\rm Div}^{0}(G,\mathbb{R})$.
\end{proposition}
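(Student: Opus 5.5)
We need to show that the linear map $\delta^{*}_{\rm res}: N^1_G \otimes_{\mathbb{Z}} \mathbb{R} \rightarrow {\rm Div}^{0}(G,\mathbb{R})$ has three properties: it is a linear isomorphism, it is an isometry from $\|\cdot\|_e^2$ to $\mathcal{E}_G$, and it maps $N^1_G$ bijectively onto $L_G$. We check these in order.

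\textbf{Step 1: linear isomorphism.} This was already noted when $\delta^{*}_{\rm res}$ was introduced, but we recall the argument. The cut space $N^1_G\otimes\mathbb{R}$ is the image of $\delta$ over $\mathbb{R}$. It has dimension $n-1$, since $\ker\delta$ consists of the constants because $G$ is connected. The composite $\delta^*\delta=\Delta$ maps $C^0(G,\mathbb{R})$ onto ${\rm Div}^0(G,\mathbb{R})$. Hence $\delta^*$ restricted to ${\rm im}\,\delta$ is surjective onto ${\rm Div}^0(G,\mathbb{R})$. Both spaces have dimension $n-1$, so this restriction is bijective.

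\textbf{Step 2: isometry.} By the definition of the energy form, $\mathcal{E}_G(D)=\|(\delta^*_{\rm res})^{-1}(D)\|_e^2$. Hence, for $x$ in the cut space, $\mathcal{E}_G(\delta^*_{\rm res}(x))=\|x\|_e^2$, which is exactly the isometry condition. If one prefers the Baker--Shokrieh description $\mathcal{E}_G(D)=\langle D,\Delta_{\rm res}^{-1}D\rangle_v$, the same identity follows by writing $x=\delta f$ with $f\perp\mathbf{1}$. Then $D=\Delta f$, so $\Delta_{\rm res}^{-1}D=f$, and adjointness gives $\langle \Delta f,f\rangle_v=\langle\delta f,\delta f\rangle_e=\|x\|_e^2$.

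\textbf{Step 3: lattice correspondence.} This is the only step needing care, namely that $\delta^*_{\rm res}(N^1_G)=L_G$.
\begin{itemize}
\item Inclusion $\subseteq$: an element of $N^1_G$ has the form $\delta f$ with $f\in C^0(G,\mathbb{Z})$. Its image is $\Delta f$, an integer combination of the columns of the Laplacian matrix, so it lies in $L_G$.
\item Inclusion $\supseteq$: every element of $L_G$ is $\Delta f$ for some integral $f$, since $L_G$ is generated by $\Delta(\mathbf{e}_v)$. It therefore equals $\delta^*(\delta f)$ with $\delta f\in N^1_G$.
\end{itemize}
Thus $\delta^*_{\rm res}$ maps $N^1_G$ onto $L_G$. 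Injectivity on $N^1_G$ is inherited from Step 1, so the restriction is a group isomorphism $N^1_G\to L_G$.

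One subtlety is that the ambient space for $L_G$ should be ${\rm Div}^0(G,\mathbb{R})\cong H_0$. That is indeed where $L_G$ lives, so the three steps together give the isomorphism of lattice-metric pairs.
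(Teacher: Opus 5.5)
Your proof is correct and follows the paper's approach: a linear isomorphism from connectivity, the isometry directly from the definition of $\mathcal{E}_G$, and the restriction $N^1_G \to L_G$ being a group isomorphism. You simply supply the details the paper leaves implicit, namely the dimension count and the identity $\delta^{*}\delta=\Delta$, which gives $\delta^{*}_{\rm res}(N^1_G)=L_G$.
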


This fact has been noted in \cite[Remark 5.13]{MohSho16}. 
We nevertheless include a proof for ease of reference. 

\begin{proof}
Since $G$ is connected, $\delta^{*}_{\rm res}$ is an isomorphism between $N^1_G \otimes_{\mathbb{Z}} \mathbb{R}$ and ${\rm Div}^{0}(G,\mathbb{R})$. Furthermore, $\delta^{*}_{\rm res}$ restricted to $N^1_G$ is an isomorphism between $N^1_G$  and $L_G$.  Finally, the property that  $\delta^{*}_{\rm res}$ preserves the underlying quadratic forms follows from the definition of $\mathcal{E}_G$. 
\end{proof}

\subsection{Riemann-Roch}

Classically, the Riemann-Roch theorem for a smooth algebraic curve $X$ asserts that the difference between the dimensions of the space of global sections of a line bundle on $X$ and its dual is  ``simple'' (essentially topological), depending only on the genus of the curve and the degree of the line bundle.  Let $K_X$ be the canonical bundle on $X$ and $g_X$ be the genus of $X$.  For any line bundle $L$ on $X$
\begin{center}
$h^0(L)-h^0(K_X \otimes L^{-1})={\rm deg}(L)-(g_X-1)$.
\end{center}
In the language of divisors, it states that for any divisor $D$ on $X$

\begin{center}
$r(D)-r([K_X]-D)={\rm deg}(D)-(g_X-1)$.
\end{center}
where $[K_X]$ is an element in the canonical divisor class. Baker and Norine in 2007 \cite{BakNor07} discovered an analogue of the Riemann-Roch theorem for graphs. \emph{The canonical divisor} $K_G$ (or simply $K$) of a graph $G$ is defined as $\sum_{v \in V(G)}({\rm val}(v)-2)(v)$. They formulated a notion of rank $r(.)$ of a divisor and showed  for any divisor $D$ on $G$,

\begin{center}
$r(D)-r(K_G-D)={\rm deg}(D)-(g-1)$.
\end{center}

Baker and Norine's original definition of rank \cite[Page 771]{BakNor07} is in terms of linear systems of divisors. We shall not state it here since we do not work with it directly. Instead, we use their degree-plus formula for rank, stated in the following. Two key components of their proof are the following:

\begin{itemize}
\item A combinatorial interpretation for the set $\mathcal{N}_G$. In particular,  every divisor class in $\mathcal{N}_G$ contains a divisor $D_{\mathcal{O}}:=\sum_{v \in V(G)}({\rm indeg}_{\mathcal{O}}(v)-1)(v)$ where $\mathcal{O}$ is an \emph{acyclic orientation} on $G$ and ${\rm indeg}_{\mathcal{O}}(v)$ is the indegree of $v$ with respect to $\mathcal{O}$. For any orientation $\mathcal{O}$,  the canonical divisor $K_G=D_{\mathcal{O}}+D_{\bar{\mathcal{O}}}$ where $\bar{\mathcal{O}}$ is the orientation opposite to $\mathcal{O}$.

\item The following degree-plus formula for rank:
\begin{equation}\label{rank_eqn} r(D)={\rm min}_{\nu \in \mathcal{N}_G} {\rm deg}^{+}(D-\nu)-1 \end{equation}
where ${\rm deg}^{+}(\tilde{D})=\sum_{v: \tilde{D}(v)>0}\tilde{D}(v)$. 
\end{itemize}

 Formula (\ref{rank_eqn}) for rank extends to $\mathbb{R}$-divisors and the Riemann-Roch formula also holds in this more general setting \cite[Theorem A.6]{Man-crc22}. 

 If ${\rm deg}(\tilde{D})=0$, then ${\rm deg}^{+}(\tilde{D})=\dfrac{\sum_{v \in V(G)}|\tilde{D}(v)|}{2}$. This observation along with Theorem \ref{critnonspe_theo} and that if $G$ is regular, then $\pi_0(\frac{K_G}{2})$ is the origin gives the following formula for the rank of the half-canonical divisor on a regular graph. 

\begin{proposition}\label{ell1rank_prop}
If $G$ is regular, then  $r(K_G/2)={\rm min}_{\nu \in \mathcal{N}_G} {\rm deg}^{+}(K_G/2-\nu)-1=\dfrac{{\rm min}_{c \in {\rm Crit}_{\triangle}(L_G)} ||c||_1}{2}-1$.
\end{proposition}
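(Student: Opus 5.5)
The plan is to begin from the degree-plus formula (\ref{rank_eqn}) applied to $D=K_G/2$, which is an $\mathbb{R}$-divisor of degree $g-1$. By \cite[Theorem A.6]{Man-crc22}, formula (\ref{rank_eqn}) remains valid for $\mathbb{R}$-divisors, so
\[
r(K_G/2)={\rm min}_{\nu \in \mathcal{N}_G} {\rm deg}^{+}(K_G/2-\nu)-1 .
\]
This already gives the first equality. For every $\nu \in \mathcal{N}_G$ the divisor $K_G/2-\nu$ has degree $(g-1)-(g-1)=0$. By the observation preceding the statement, ${\rm deg}^{+}(K_G/2-\nu)=\|K_G/2-\nu\|_1/2$.

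The next step uses regularity to move the problem into $H_0$. Let $G$ be $k$-regular, so that $K_G/2=\frac{k-2}{2}(1,\dots,1)$ is a constant vector. Each $\nu\in\mathcal{N}_G$ lies in $H_{g-1}$, and $K_G/2$ lies in $H_{g-1}$ as well. The projection $\pi_0$ subtracts the same multiple of $(1,\dots,1)$ from every point of a fixed hyperplane $H_d$. Hence $\pi_0(K_G/2)-\pi_0(\nu)=K_G/2-\nu$. Moreover $\pi_0(K_G/2)=0$, because a constant vector projects to the origin. It follows that
\[
K_G/2-\nu=-\pi_0(\nu) \qquad \text{and} \qquad \|K_G/2-\nu\|_1=\|\pi_0(\nu)\|_1 .
\]

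Finally, Theorem \ref{critnonspe_theo} gives ${\rm Crit}_{\triangle}(L_G)=\pi_0(\mathcal{N}_G)$ for regular $G$, and the first description $-\pi_0(\mathcal{N}_G)$ would serve equally well since the $\ell_1$-norm is symmetric. So as $\nu$ ranges over $\mathcal{N}_G$, the point $c=\pi_0(\nu)$ ranges over ${\rm Crit}_{\triangle}(L_G)$. Taking the minimum yields $\min_{c}\|c\|_1/2-1$.

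The only delicate point is the validity of (\ref{rank_eqn}) for the rational divisor $K_G/2$ when $k$ is odd, together with the fact that the minimum is taken over the integral set $\mathcal{N}_G$. Both are covered by the cited extension to $\mathbb{R}$-divisors, so I expect no real obstacle beyond bookkeeping with the projections.
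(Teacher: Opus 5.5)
Your proposal is correct and follows the paper's own argument. You combine the degree-plus formula extended to $\mathbb{R}$-divisors (\cite[Theorem A.6]{Man-crc22}), the identity ${\rm deg}^{+}=\|\cdot\|_1/2$ in degree zero, $\pi_0(K_G/2)=0$ for regular $G$, and Theorem \ref{critnonspe_theo}, and you only make explicit the bookkeeping $K_G/2-\nu=-\pi_0(\nu)$ that the paper leaves implicit.
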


\subsection{Expanders, Ramanujans and Random Graphs}\label{expramran_subsect}


Let $\lambda_{\rm min}(G)$ and $\lambda_{\rm max}(G)$ be the minimum and maximum non-zero eigenvalues of the Laplacian matrix of $G$, respectively.  Note that $\lambda_{\rm min}(G)$ is also called the \emph{spectral gap}.


\begin{definition}{\rm {\bf (Spectral Expanders)}}
Fix an integer $k \geq 3$. A family  $\mathcal{F}=\{G_n\}_{n \in I}$ of $k$-regular graphs is called \emph{spectral expander} if there is a positive real $\mu$ such that the spectral gap $\lambda_{\rm min}(G_n) \geq \mu$ for every $n$.
\end{definition}

We refer to \cite{HooLinWig06} for more on this topic. Note that the definition of expander family \cite[Definition 2.2]{HooLinWig06} seems different: there the edge expansion ratio is to be bounded away from zero (instead of the spectral gap).  However, the two definitions are equivalent by the Cheeger inequalities \cite[Theorem 2.4]{HooLinWig06}.   Chief examples of expander graphs are \emph{Ramanujan graphs} \cite[Definition 1.1]{LubPhiSar88} (infinite families thereof). 
Before defining them, note that the vertex adjacency matrix of a $k$-regular graph ($k \geq 2$) has at least two distinct absolute eigenvalues with $k$ as the maximum.

\begin{definition}{\rm {\bf (Ramanujan Graph)}}
A $k$-regular graph is called a \emph{Ramanujan graph} if the minimum absolute second largest eigenvalue of its vertex adjacency matrix is at most $2 \cdot \sqrt{k-1}$.
\end{definition}

By definition, the spectral gap $\lambda_{\rm min}$ of a Ramanujan graph satisfies $\lambda_{\rm min} \geq k-2 \cdot \sqrt{k-1}$.  Hence, for any fixed $k \geq 3$, any infinite family of $k$-regular Ramanujan graphs is spectral expander.
In the following, we give examples of expander graphs. 

\begin{itemize}
\item The Margulis expander graph \cite[Section 8]{HooLinWig06} is a family of $8$-regular spectral expander graphs. 

  \item Let $p,q$ be odd distinct primes both congruent to one modulo four.  The Lubotzky-Phillips-Sarnak (LPS) Ramanujan graph $X^{p,q}$ is a $(p+1)$-regular graph on $\dfrac{q(q^2-1)}{2}$ or $q(q^2-1)$ vertices based on whether $\legendre[q]{p}$ is one or minus one, respectively. 
  
    \item Marcus, Spielman and Srivastava \cite{MarSpiSri15} construct families of $k$-regular graphs for any fixed integer $k \geq 3$ where each member is a bipartite, Ramanujan graph. 

\end{itemize}

Most of this article requires a slightly weaker notion than Ramanujan, namely \emph{almost-Ramanujan}.  Let $\mathcal{F}=\{G_n\}_{n \in I}$ be a family of regular graphs and let $k(n) \geq 3$ be the valence of $G_n$.

\begin{definition}{\rm {\bf(Almost-Ramanujan)}}
The family $\mathcal{F}$ is said to be \emph{almost-Ramanujan} if for every $\epsilon>0$, there is an $n_0  \in I$ such that  $\lambda_{\rm min}(G_n) \geq k(n)-2 \cdot \sqrt{k(n)-1}-\epsilon$  for every $n \in I$ that is at least $n_0$. 
\end{definition}
Note that an almost-Ramanujan family is spectral expander.  Also, any family of Ramanujan graphs is almost-Ramanujan.  Another main source of almost-Ramanujan graphs are \emph{random graphs}. More precisely, graphs drawn according to a suitable \emph{random model}.  Following Friedman \cite{Fri08}, we consider two 
random models, namely type $\mathcal{U}$ and type $\mathcal{I}$. 

A random model is a sequence  $\{\mathfrak{D}_{n,k}\}_{n \in \mathbb{N},k \in \mathbb{N}_{\geq 3}}$ where each  $\mathfrak{D}_{n,k}$ is a probability distribution on the set of $k$-regular graphs on $F(n)$ vertices for some function $F$. Type $\mathcal{U}$ consists of uniform distributions $\mathcal{U}_{n,k}$ on the set of simple, $k$-regular, connected graphs on $n$ vertices.  Note that  type $\mathcal{U}$ is supported on simple graphs while multigraphs are also included in Brill-Noether theory. To address this issue, we consider type $\mathcal{I}$. Let $\mathcal{I}_{n,k}$ be the probability distribution induced on $k$-regular graphs on $2n$ vertices formed by a union of $k$ perfect matchings on $\{1,\dots,2n\}$ drawn independently and uniformly at random. Type $\mathcal{I}$ is $\{\mathcal{I}_{n,k}\}_{n,k}$. Note that this model is supported on an even number of vertices and includes disconnected graphs.  Fix an integer $k \geq 3$,  a family $\mathcal{F}=\{G_n\}_{n \in \mathbb{N}}$ of $k$-regular graphs is said to be of 
\emph{Type $\mathcal{U}$ (or Type $\mathcal{I}$, resp.)} if each $G_n$ is drawn according to either $\mathcal{U}_{n,k}$ (or $\mathcal{I}_{n,k}$, resp.).
Friedman showed that a random family of either Type $\mathcal{U}$ or Type $\mathcal{I}$ is almost-Ramanujan \cite[Theorem 1.3 and Page 5, resp.]{Fri08}. More precisely,

\begin{theorem}\label{ranalmram_theo}
Fix an integer $k \geq 3$.  Let $\mathcal{F}$ be a family of $k$-regular random graphs of either Type $\mathcal{U}$ or Type $\mathcal{I}$.  The family  $\mathcal{F}$ is almost-Ramanujan with high probability, i.e. for every $\epsilon>0$, the probability that $\lambda_{\rm min}(G_n) \geq k-2 \cdot \sqrt{k-1}-\epsilon$ tends to one as $n \rightarrow \infty$.
\end{theorem}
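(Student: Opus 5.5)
The plan is to deduce the statement directly from Friedman's resolution of Alon's second eigenvalue conjecture \cite{Fri08}, translating his adjacency-spectrum bound into a bound on the Laplacian spectral gap. For a $k$-regular graph $G$ with adjacency matrix $A_G$, the Laplacian is $kI - A_G$. So the Laplacian eigenvalues are $k-\mu_i$, where $k=\mu_1 \geq \mu_2 \geq \dots \geq \mu_n$ are the adjacency eigenvalues. If $G$ is connected, the eigenvalue $k$ of $A_G$ is simple, and then
\begin{equation*}
\lambda_{\rm min}(G) = k - \mu_2 \geq k - \max\{|\mu_2|,|\mu_n|\}\ \text{(taking only nontrivial eigenvalues)}.
\end{equation*}
Hence it suffices to show: for every $\epsilon>0$, with probability tending to one, every nontrivial adjacency eigenvalue has absolute value at most $2\sqrt{k-1}+\epsilon$.

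First I would handle Type $\mathcal{I}$, which is the model Friedman treats most directly. His main theorem states that for fixed $k\geq 3$ and any $\epsilon>0$, a union of $k$ independent uniform perfect matchings on $2n$ vertices has all nontrivial eigenvalues of absolute value at most $2\sqrt{k-1}+\epsilon$ with probability $1-o(1)$; he in fact gets $1-O(n^{-\tau})$. This bound forces the eigenvalue $k$ to be simple, so it also forces $G_n$ to be connected. Therefore the event has probability tending to one whether or not we condition on connectivity: conditioning on an event of probability $1-o(1)$ changes probabilities by only $o(1)$. Combining this with the identity above gives $\lambda_{\rm min}(G_n)\geq k-2\sqrt{k-1}-\epsilon$ with high probability.

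Next I would treat Type $\mathcal{U}$ by a transfer argument. I would pass to the configuration (or permutation/matching) model used by Friedman. For fixed $k$, the probability that such a random multigraph is simple is bounded below by a positive constant $c_k$; classically this is about $e^{-(k^2-1)/4}$. Conditioned on being simple, the graph is uniform on simple $k$-regular graphs. So the probability that a uniform simple $k$-regular graph violates the eigenvalue bound is at most $o(1)/c_k = o(1)$. The further conditioning in $\mathcal{U}_{n,k}$ on connectedness is harmless, since random $k$-regular graphs are connected with high probability. This is precisely the route Friedman indicates for his Theorem 1.3.

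The main obstacle, the eigenvalue bound itself, is Friedman's deep trace-method theorem and is not reproved here; I would simply cite it. The only care needed on our side is bookkeeping. One point is that Friedman's statement for Type $\mathcal{I}$ appears for $2n$ vertices, whereas the family is indexed by $n$; this reindexing is harmless. The other is the contiguity step, where the simplicity probability must stay bounded away from zero uniformly in $n$; this holds because $k$ is fixed.
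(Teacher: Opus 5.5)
Your overall route matches the paper's. The paper gives no argument beyond citing Friedman (Theorem~1.3 for Type $\mathcal{U}$, and the perfect-matching model for Type $\mathcal{I}$), leaving the identity $\Delta = kI - A_G$ implicit. Your translation of the adjacency bound into a bound on the Laplacian spectral gap is correct, and so is your connectivity bookkeeping for Type $\mathcal{I}$.

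The step that does not work is your own derivation of the Type $\mathcal{U}$ case. The claim ``conditioned on being simple, the graph is uniform on simple $k$-regular graphs'' holds for the configuration (pairing) model, where each simple graph arises from exactly $(k!)^n$ pairings. It is false for the permutation and matching models for which you are citing Friedman's bound, such as $\mathcal{I}_{n,k}$. Under $\mathcal{I}_{n,k}$, a simple graph $H$ has probability proportional to the number of ordered decompositions of $H$ into $k$ perfect matchings. That number varies with $H$ and can even be zero: the Petersen graph is $3$-regular but is not a union of three perfect matchings. So the estimate $o(1)/c_k$ does not follow from what you cite, and you cannot move freely between ``configuration'' and ``permutation/matching'' models.

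To close the step yourself you would need one of two things. One is an eigenvalue bound proved for the pairing model itself, which is not what you are citing. The other is a contiguity theorem stating that the uniform model is contiguous to $\mathcal{I}_{n,k}$ conditioned on simplicity, so that events of probability $o(1)$ transfer; this is a small-subgraph-conditioning result of Janson and of Molloy, Robalewska, Robinson and Wormald. The simplest repair is to do what the paper does and cite Friedman's statement for the uniform model (his Theorem~1.3) directly.
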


The following proposition on the growth of certain quantities involving the spectrum will come in handy.

\begin{lemma}\label{almramgro_lem}
Let $\mathcal{F}$ be a family of almost-Ramanujan graphs (of either fixed or unbounded valence) and let $g(n)$ be the genus of $G_n$. The following asymptotics hold:
\begin{enumerate}
\item \label{prod_it}  The product $n \cdot \lambda_{\rm min}(G_n) \in \Omega(g(n))$.
\item \label{diff_it}  If  $\mathcal{F}$ has either unbounded or constant valence $k \geq 5$, then $\dfrac{\sqrt{n \cdot \lambda_{\rm min}(G_n)}}{4}-\dfrac{\sqrt{n}}{2\sqrt{\lambda_{\rm max}(G_n)}} \in \Omega(\sqrt{g(n)})$.
\end{enumerate}
\end{lemma}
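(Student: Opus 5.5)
We prove the two items directly from the definition of almost-Ramanujan together with elementary facts about regular graphs. Throughout, $k=k(n)$ is the valence of $G_n$. Since $G_n$ is $k$-regular, $|E(G_n)|=kn/2$, so $g(n)=kn/2-n+1=n(k-2)/2+1$. In particular $g(n)\in\Theta(nk)$ because $k\geq 3$, and it suffices to compare everything with $nk$.

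For Item (\ref{prod_it}) we use a uniform lower bound on the spectral gap. Almost-Ramanujan with, say, $\epsilon=1/10$ gives $\lambda_{\rm min}(G_n)\geq k-2\sqrt{k-1}-1/10$ for all large $n$. The function $k-2\sqrt{k-1}$ is increasing for $k\geq 2$; it equals $3-2\sqrt2\approx 0.17$ at $k=3$ and grows like $k$. Hence there is an absolute constant $c>0$ with $k-2\sqrt{k-1}-1/10\geq c\,k$ for all $k\geq 3$:
\begin{itemize}
\item for small $k$ this holds because the left side is positive;
\item for large $k$ it holds because the left side is asymptotic to $k$.
\end{itemize}
Therefore $n\lambda_{\rm min}(G_n)\geq c\,nk\in\Omega(g(n))$. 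The finitely many small $n$ do not affect the asymptotics.

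For Item (\ref{diff_it}) we bound both terms. The Laplacian of a $k$-regular graph satisfies $\lambda_{\rm max}\leq 2k$, and $\lambda_{\rm max}\geq \lambda_{\rm min}$. So the negative term is at most $\sqrt n/(2\sqrt{\lambda_{\rm min}})$, and it suffices to show
\[
\frac{\sqrt{n\lambda_{\rm min}}}{4}-\frac{\sqrt n}{2\sqrt{\lambda_{\rm min}}}=\frac{\sqrt n}{4\sqrt{\lambda_{\rm min}}}\bigl(\lambda_{\rm min}-2\bigr)\in\Omega(\sqrt{nk}).
\]
This reduces the whole item to showing $\lambda_{\rm min}-2\geq c'\lambda_{\rm min}$ eventually, for some constant $c'>0$. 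Given that, the expression is at least $c'\sqrt{n\lambda_{\rm min}}/4\in\Omega(\sqrt{g(n)})$ by Item (\ref{prod_it}).

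This reduction is the crux, and it is where $k\geq 5$ enters. At $k=5$ the Ramanujan bound gives $k-2\sqrt{k-1}=1$, so $\lambda_{\rm min}$ is only guaranteed to be near $1<2$, and the estimate above fails. For constant $k\geq 5$ we would therefore keep the exact term $\sqrt n/(2\sqrt{\lambda_{\rm max}})$ and use a lower bound on $\lambda_{\rm max}$ rather than $\lambda_{\rm max}\geq\lambda_{\rm min}$:
\begin{itemize}
\item Bipartite or not, $\lambda_{\rm max}\geq k+1$. This follows from a Rayleigh quotient on a test vector supported on a vertex and its neighbours, or from the fact that the trace of the adjacency matrix squared forces an adjacency eigenvalue $\leq -1$ with multiplicity argument. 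Using $\lambda_{\rm max}\geq k$, which follows from the trace, is simpler.
\item This gives $\frac{\sqrt n}{2\sqrt{\lambda_{\rm max}}}\leq\frac{\sqrt n}{2\sqrt k}$.
\item Hence we need $\frac{\sqrt{\lambda_{\rm min}}}{4}-\frac{1}{2\sqrt k}>0$ uniformly, that is $\lambda_{\rm min}>4/k$.
\item For $k\geq 5$ we have $k-2\sqrt{k-1}\geq 1>4/5\geq 4/k$, so taking $\epsilon$ small, for example $\epsilon=1/10$, leaves a positive constant gap $\delta_k$.
\end{itemize}
The expression is then at least $\delta_k\sqrt n\in\Omega(\sqrt{g(n)})$ since $k$ is constant. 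The same argument explains why $k=3,4$ fail: there $k-2\sqrt{k-1}<4/k$.

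For unbounded $k$ the estimate is cruder. We have $\lambda_{\rm min}\geq k/2$ eventually and $\lambda_{\rm max}\geq k$. So the expression is at least $\sqrt{nk/2}/4-\sqrt{n}/(2\sqrt{k})$, which is $\Omega(\sqrt{nk})=\Omega(\sqrt{g(n)})$ once $k$ is large.

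The step needing care is the lower bound $\lambda_{\rm max}\geq k$. It follows since $\sum_i\lambda_i=\operatorname{tr}\Delta=nk$ over $n-1$ nonzero eigenvalues, so the largest is at least $nk/(n-1)\geq k$.
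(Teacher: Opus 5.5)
Your argument is correct and is essentially the paper's own (omitted) proof. You combine the almost-Ramanujan bound $\lambda_{\rm min}\ge k-2\sqrt{k-1}-\epsilon$ with a lower bound on $\lambda_{\rm max}$ of order $k$. The paper suggests $\lambda_{\rm max}\ge k+1$, which follows at once from the test vector $e_u-e_v$ on an edge; your trace bound $\lambda_{\rm max}\ge k$ also suffices, since $k-2\sqrt{k-1}=1>4/5$ at $k=5$.

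Your first reduction via $\lambda_{\rm max}\ge\lambda_{\rm min}$ is a dead end that you rightly abandon. Your unbounded case implicitly reads ``unbounded'' as $k(n)\to\infty$, which is the intended meaning, as in Case 4 of the main theorem.
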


The proof of Lemma \ref{almramgro_lem} is straightforward and is hence, omitted. Note that the lower bound $\lambda_{\rm max}(G_n) \geq k(n)+1$ can be used for Item (\ref{diff_it}). 
 We refer to \cite{HauMckYau24} for recent progress on the proportion of Ramanujan graphs.









\section{Geometric Foundations}

As we mentioned in Section \ref{prooftech_sect}, a key geometric input to Brill-Noether existence is a Cheeger-like inequality (Lemma \ref{coveglg}) for the covering radius of ${\rm Crit}_{\triangle}(L_G)$ with respect to the energy quadratic form. This section is dedicated to it. We start by showing that the set of holes of the Laplacian-energy pair $(L_G,\mathcal{E}_G)$ of a regular graph is precisely ${\rm Crit}_{\triangle}(L_G)$.

\subsection{Holes of the Laplacian-Energy Pair}\label{lapenehol_subsect}

Recall the divisor $D_{\mathcal{O}}=\sum_{v \in V(G)}({\rm indeg}_{\mathcal{O}}(v)-1)(v)$ associated to an acyclic orientation $\mathcal{O}$.

\begin{theorem}\label{lapenehol_theo}
Let $G$ be a regular, connected graph. The set of holes of the Laplacian-energy pair is ${\rm Crit}_{\triangle}(L_G)$. In particular, the set of holes supported at the origin is $\{c_{\pi}\}_{\pi \in S_n}$ where $c_{\pi}=\pi_0(D_{\mathcal{O}})$ where $\mathcal{O}$ is the acyclic orientation induced by $\pi$.
\end{theorem}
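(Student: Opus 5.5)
The plan is to transport the problem through Proposition \ref{lapcutiso_prop} to the cut lattice $N^1_G$ with the Euclidean norm $\|\cdot\|_e$. By Proposition \ref{sheq_prop}, the holes of $(L_G,\mathcal{E}_G)$ are exactly the images under $\delta^{*}_{\rm res}$ of the holes of $(N^1_G,\|\cdot\|_e^2)$. For a Euclidean lattice, every local maximum of the distance function $h_{\|\cdot\|_e,N^1_G}$ is a vertex of some Voronoi cell, since a point in the relative interior of a positive-dimensional face can be moved along that face to increase its distance. So the holes supported at the origin form a subset of the vertex set of the Voronoi cell $V(0)$ of $N^1_G$ inside the cut space $N^1_G\otimes\mathbb{R}$. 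The first step is therefore to identify $V(0)$.

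I would show that $V(0)$ is the orthogonal projection $p$ of the cube $[-\tfrac12,\tfrac12]^{E}\subset C^1(G,\mathbb{R})$ onto the cut space. The inequality description should follow from the following observation: for $x=\delta f\in N^1_G$ and $h$ in the cube, $\langle p(h),x\rangle_e=\langle h,\delta f\rangle_e$. This identity says $p(\text{cube})$ lies in every half-space $\langle y,x\rangle_e\le \tfrac12\|x\|_e^2$. For the reverse containment I would use the orthogonality $N^1_G=(\Lambda^1_G)^{\perp}$ together with the total unimodularity of $\delta$: a point of the cut space outside $p(\text{cube})$ is separated from it by a functional $\delta f$, and by total unimodularity this $f$ can be taken integral, which gives a lattice vector strictly closer.

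This projected cube is a zonotope generated by the segments $p(\chi_e)$. Its vertices are the maximisers of generic linear functionals $\langle\cdot,\delta f\rangle_e$ with $f$ injective. Such an $f$ is the same data as a permutation $\pi$, and the maximiser is $p(h_{\mathcal{O}}/2)$, where $h_{\mathcal{O}}$ is the $\pm1$ cochain of the acyclic orientation $\mathcal{O}$ induced by $\pi$.

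Next I push these vertices back with $\delta^{*}$. The flow space is $\ker\delta^{*}$, so $\delta^{*}p(h)=\delta^{*}h$. Hence
\[
\delta^{*}(h_{\mathcal{O}}/2)(v)=\tfrac12\bigl({\rm indeg}_{\mathcal{O}}(v)-{\rm outdeg}_{\mathcal{O}}(v)\bigr)={\rm indeg}_{\mathcal{O}}(v)-\tfrac{k}{2}.
\]
On a $k$-regular graph this equals $D_{\mathcal{O}}(v)-\tfrac{k-2}{2}$, that is, $D_{\mathcal{O}}-K_G/2$. Because $K_G/2$ is a constant vector, this divisor equals $\pi_0(D_{\mathcal{O}})=c_\pi$. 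Translating by $L_G$ and invoking Theorem \ref{critnonspe_theo} (${\rm Crit}_{\triangle}(L_G)=\pi_0(\mathcal{N}_G)$ in the regular case, where every class of $\mathcal{N}_G$ contains some $D_{\mathcal{O}}$) then gives that the set of all Voronoi vertices equals ${\rm Crit}_{\triangle}(L_G)$. This establishes one inclusion: every hole of $(L_G,\mathcal{E}_G)$ lies in ${\rm Crit}_{\triangle}(L_G)$.

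The main obstacle is the converse: every Voronoi vertex $c=p(h_{\mathcal{O}}/2)$ is a genuine local maximum. Equivalently, $0$ must lie in the interior, within the cut space, of ${\rm conv}\{c-x\}$, where $x$ ranges over the lattice points equidistant from $c$ and $0$. These $x$ should be the cuts $\delta(\mathbf{1}_S)$, with $S$ a subset on which $\mathcal{O}$ orients every edge of the cut the same way, for instance initial segments of $\pi$ and their complements, in the sense that reversing such a directed cut gives another vertex representation of $c$. My first attempt would be by contradiction. Suppose a direction $u=\delta f\neq 0$ satisfies $\langle u,c-x\rangle_e\ge 0$ for all such $x$. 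Taking $S$ to be the upper level sets of $f$, which can be arranged to be $\mathcal{O}$-directed cuts after comparing $f$ with $\pi$, should force a strict sign violation and hence $u=0$. If that fails, I would fall back on the regularity of $G$: the group generated by reversing directed cuts acts transitively enough that the centroid of the nearest-point set is $c$ itself.
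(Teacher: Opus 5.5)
\textbf{Verdict: genuine gap.} The converse you flag (every Voronoi vertex is a local maximum) cannot be proved. With the paper's definition of holes it is false, and the statement fails as written.

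\textbf{What you do prove.} Your first half is sound, and it takes a different route from the paper.
\begin{itemize}
\item The paper builds the flag simplices $\mathcal{S}_\sigma$, checks that they are Delaunay, and reads off their circumcentres.
\item You instead identify the Voronoi cell of $N^1_G$ as the zonotope $p([-\tfrac12,\tfrac12]^{E})$. The reverse containment is cleanest via the coarea decomposition $\delta g=\int\delta\mathbf{1}_{\{g>\lambda\}}\,d\lambda$, which yields a violated cut inequality; total unimodularity alone is vague here. You then get the vertices $p(\eta_{\mathcal{O}}/2)$ from generic functionals.
\item Your remark that every local maximum of $h$ is a Voronoi vertex is correct.
\end{itemize}
Together these show that every hole lies in $\mathrm{Crit}_{\triangle}(L_G)$. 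They also show that the set of \emph{Voronoi vertices} of $(L_G,\mathcal{E}_G)$ is exactly $\mathrm{Crit}_{\triangle}(L_G)$, with those supported at $0$ being the $c_\pi$.

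\textbf{Why the converse fails.} Here holes are local maxima of $h_{\mathcal{E}_G,L_G}$.
\begin{itemize}
\item Let $q=\mathrm{bal}_{\mathcal{O}}/2$. For integral $f$ one has $\mathcal{E}_G(q-Lf)=\mathcal{E}_G(q)+\sum_{e\in\mathcal{O}}\bigl(\delta f(e)^2-\delta f(e)\bigr)$. So the nearest lattice points are the $Lf$ with $\delta f(e)\in\{0,1\}$ on every $\mathcal{O}$-edge.
\item Move $q$ by $\varepsilon(\chi_t-\chi_s)$. Each of these squared distances changes at rate $2\bigl((\phi(t)-\phi(s))-(f(t)-f(s))\bigr)$, where $L\phi=q$.
\item Take $G=K_{4,4}$ with sides $\{s,b_1,b_2,b_3\}$ and $\{t,a_1,a_2,a_3\}$. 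Use the acyclic orientation $s\to t$, $s\to a_i$, $a_i\to b_j$, $b_j\to t$.
\item Then $q$ takes the values $-2,2,-1,1$ at $s,t,a_i,b_j$. The potential $\phi=(0,\tfrac{17}{16},\tfrac{5}{16},\tfrac{3}{4})$ on $(s,t,a_i,b_j)$ solves $L\phi=q$.
\item Because of the edge $s\to t$, every nearest point has $f(t)-f(s)\in\{0,1\}$. So every rate is at least $2(\tfrac{17}{16}-1)>0$.
\end{itemize}
Hence $h_{\mathcal{E}_G,L_G}$ strictly increases from $q$ along $\chi_t-\chi_s$. So $q=\pi_0(D_{\mathcal{O}})=c_\pi\in\mathrm{Crit}_{\triangle}(L_G)$ is a Voronoi vertex supported at the origin that is not a hole. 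The same happens in $K_{k,k}$ for every $k\ge 4$, where $\phi(t)-\phi(s)=\tfrac12+(\tfrac{k-1}{k})^2$.

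Geometrically, the circumcentre has $st$-coordinate $\tfrac{17}{16}$, while its whole Delaunay cell lies in the slab $0\le y_{st}\le 1$. So $0$ is not even in $\mathrm{conv}\{c-x\}$. Neither your level-set contradiction nor the centroid/symmetry fallback can succeed. Regularity, and here even vertex-transitivity, does not help.

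\textbf{The paper does not get past this point either.} It settles the converse by citing the principle that every Voronoi vertex of a lattice is a hole. That principle is right only if ``hole'' means ``Voronoi vertex'', not local maximum. What is provable is the Voronoi-vertex version, which your first half establishes. That version is also all the later Cheeger argument needs, since Lemma \ref{taulb_lem} bounds $\|\pi_N(\eta_{\mathcal{O}}/2)\|_e$ over all acyclic $\mathcal{O}$.
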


We prove Theorem \ref{lapenehol_theo} by first obtaining an analogous description for the lattice of integral cuts of $G$.  Proposition \ref{lapcutiso_prop} then allows us to transfer this to a description of the holes of the Laplacian-energy pair.

{\bf Holes of the Lattice of Integral Cuts:} We provide an explicit description of the holes of $N^1_G$ in terms of acyclic orientations of $G$. On a related note, Mohammadi and Shokrieh \cite[ Section 6.2, Theorem 5.6]{MohSho16} describe the Delaunay decomposition of $(N^1_G,||.||_e^2)$ (and of $(L_G,\mathcal{E}_G)$). Amini and Esteves \cite[Theorem A]{AmiEst20} show a bijection between the facet poset of the Voronoi cell of $(N^1_G,||.||_e^2)$ and the  poset of acyclic orientations of ``cut subgraphs of $G$".  The following description of the holes of  $N^1_G$  is implicit in \cite[Lemma 3.42 ]{AmiEst20}.

We start by formalising the notion of orientation. An orientation on $G$ is a function $\mathcal{O}: \mathbb{E}(G) \rightarrow \{1,-1\}$ such that $\mathcal{O}(e)+\mathcal{O}(\bar{e})=0$ for every $e \in \mathbb{E}(G)$. Informally, an orientation is a choice of oriented edges such that, for each $e$, precisely one of $e$ or $\bar{e}$ is picked.  For a nonempty subset $S$ of $\mathcal{O}^{-1}(1)$, let $\eta_S:=\sum_{e \in S} (e^{*}) \in C^{1}(G,\mathbb{Z})$ where $(e^{*})$ is the indicator function at $e$. We denote $\eta_{\mathcal{O}^{-1}(1)}$ by the shorthand $\eta_{\mathcal{O}}$.
 An orientation $\mathcal{O}$ is said to be \emph{acyclic} if for every nonempty subset $S$ of $\mathcal{O}^{-1}(1)$, the $1$-cochain $\eta_S \notin \Lambda^1_G$ (recall that this is the lattice of integral flows). Intuitively, $\mathcal{O}$ does not contain any directed cycles.

  Let $\{v_1,\dots,v_n\}$ be the set of vertices $V(G)$ of $G$. 
 For a permutation $\sigma$ on $V(G)$, define an orientation $\mathcal{O}_{\sigma}$ as 
 
 \[
  \mathcal{O}_{\sigma}(e)=
 \begin{cases}
 ~~1, \text{if $e=(v_{\sigma(i)},v_{\sigma(j)})$ and $i<j$}, \\
 -1, \text{otherwise}. 
 \end{cases}
 \]
 
 A straightforward fact is that an orientation is acyclic if and only if it arises from a permutation on the vertices.  Let $\pi_N: C^1(G,\mathbb{R}) \rightarrow N^1_G \otimes_{\mathbb{Z}} \mathbb{R}$ be the Euclidean orthogonal projection map. 


\begin{proposition}\label{latcuthol_prop}
 The set of holes of $(N^1_G,||.||_e^2)$ is $\{\pi_N(\eta_{\mathcal{O}}/2)+ {\bf q}|~ \mathcal{O}$ is an acyclic orientation on $G$ and ${\bf q} \in N^1_G\}$. The set of holes of $(N^1_G,||.||_e^2)$ supported at the origin is 
 $\{\pi_N(\eta_{\mathcal{O}}/2)|~ \mathcal{O}$ is an acyclic orientation on $G \}$. 
\end{proposition}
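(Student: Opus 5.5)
Since $h_{\|\cdot\|_e,N^1_G}$ is $N^1_G$-periodic, the set of holes is invariant under translation by $N^1_G$. It therefore suffices to determine the holes of $(N^1_G,\|\cdot\|_e^2)$ that are supported at the origin, meaning those local maxima $x$ whose set of nearest lattice points contains $0$. We work inside the cut space $W:=N^1_G\otimes_{\mathbb{Z}}\mathbb{R}$. A point $x\in W$ is a local maximum of the distance function exactly when it is a vertex of the Voronoi diagram, i.e. when $x$ lies in the relative interior of the convex hull of its set $S(x)\subset N^1_G$ of closest lattice points and $S(x)$ affinely spans $W$. So the plan has two parts. First, we describe the Voronoi cell $\mathrm{Vor}(0)$ of $N^1_G$ in $W$. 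Second, we identify its vertices with the points $\pi_N(\eta_{\mathcal{O}}/2)$, where $\mathcal{O}$ ranges over acyclic orientations.

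\textbf{Step 1 (the Voronoi cell via the unit cube).} Let $Q=\{h\in C^1(G,\mathbb{R}) : |h(e)|\le 1/2 \text{ for all } e\}$ be the cube of $1$-cochains. We claim that $\mathrm{Vor}(0)=\pi_N(Q)$. For the inclusion $\mathrm{Vor}(0)\subseteq\pi_N(Q)$, fix $x\in\mathrm{Vor}(0)$ and write $x=\delta f$ with $f\in C^0(G,\mathbb{R})$. The cut space is the orthogonal complement of the flow space, and for $y=\delta\phi\in N^1_G$ we have $\langle x,y\rangle_e=\langle \delta f,\delta\phi\rangle_e$. The Voronoi inequalities $\langle x,y\rangle_e\le \|y\|_e^2/2$ applied to $y=\delta\mathbf{1}_U$ (cuts of vertex subsets $U$) say that the "potential" $f$ cannot push more than half of each cut. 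By max-flow/min-cut (Hoffman's circulation theorem), these inequalities yield a flow $\lambda\in\ker\delta^*\otimes\mathbb{R}$ with $|x(e)+\lambda(e)|\le 1/2$ on every edge. Hence $x=\pi_N(x+\lambda)$ with $x+\lambda\in Q$. The reverse inclusion is elementary: for $h\in Q$ and $y\in N^1_G$ integral, $\langle\pi_N h,y\rangle_e=\langle h,y\rangle_e\le\sum_e|y(e)|/2\le\|y\|_e^2/2$, since $|y(e)|\le y(e)^2$ for integers. This shows in addition that only cuts $y=\delta\mathbf{1}_U$ whose values lie in $\{0,\pm1\}$ give facets.

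\textbf{Step 2 (vertices of the cell).} Because $\mathrm{Vor}(0)=\pi_N(Q)$ is a linear projection of the cube, it is a zonotope. Every vertex of it is $\pi_N$ of some vertex $\eta_{\mathcal{O}}/2$ of $Q$, and $\eta_{\mathcal{O}}/2$ is a cube vertex for every orientation $\mathcal{O}$. A cube vertex $\eta_{\mathcal{O}}/2$ projects to a vertex of the zonotope exactly when there is a linear functional $\langle\cdot,\delta f\rangle_e$, with $\delta f$ in $W$, whose sign pattern on the edges is $\mathcal{O}$ and which has no zero values. That is, we need $f(e_+)-f(e_-)>0$ exactly on the edges in $\mathcal{O}^{-1}(1)$. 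Such an $f$ exists precisely when $\mathcal{O}$ is acyclic: take $f$ to be the position in a linear extension $\sigma$, so that $\mathcal{O}=\mathcal{O}_\sigma$; conversely, a directed cycle $S$ forces $\langle \eta_S,\delta f\rangle_e=0$ because $\eta_S\in\Lambda^1_G$. We also check that distinct acyclic orientations give distinct vertices, or at least that the stated set is the correct set of points. Combined with Step 1, this identifies the vertices of $\mathrm{Vor}(0)$, which are exactly the holes supported at $0$, with $\{\pi_N(\eta_{\mathcal{O}}/2)\}$. The first assertion of the proposition then follows by $N^1_G$-translation.

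\textbf{Main obstacle.} The hard step is the inclusion $\mathrm{Vor}(0)\subseteq\pi_N(Q)$ in Step 1. There are two things to control. First, the Voronoi cell is a priori cut out by all lattice vectors, so we must show that only the $\{0,\pm1\}$-valued cuts matter; for this we would decompose an integral cut $\delta\phi$ along the level sets of $\phi$ into a conformal sum of elementary cuts $\delta\mathbf{1}_{\{\phi\ge t\}}$, which reduces its inequality to theirs. Second, we must carry out the feasible-circulation (Hoffman) argument carefully with capacities $[-1/2-x(e),\,1/2-x(e)]$. If that route becomes messy, we would instead cite the description in \cite{AmiEst20} or the Delaunay decomposition in \cite{MohSho16} for this inclusion.
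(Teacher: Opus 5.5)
\textbf{Verdict: genuine gap, shared with the paper.} The step that fails is in your first paragraph, where you identify local maxima of the distance function with Voronoi vertices. A point $x\in W$ is a local maximum exactly when $x\in\mathrm{int}\,\mathrm{conv}\,S(x)$. That condition implies $S(x)$ affinely spans $W$, but the converse does not hold. Your Steps 1--2 only produce Voronoi vertices (vertices of $\mathrm{Vor}(0)$), and you never check the interior condition.

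Here the interior condition can actually fail. For $x=\pi_N(\eta_{\mathcal O}/2)$, the Pythagoras argument shows that the nearest lattice points are exactly the $q\in N^1_G$ with $q(e)\in\{0,1\}$ for all $e\in\mathcal O^{-1}(1)$. So a hole must satisfy $0<x(e)<1$ on $\mathcal O^{-1}(1)$; by total unimodularity this condition is also sufficient.

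A counterexample: take $G=K_{4,4}$ with sides $\{u,a_2,a_3,a_4\}$ and $\{v,b_2,b_3,b_4\}$, and let $\mathcal O$ be induced by the order $u<b_2<b_3<b_4<a_2<a_3<a_4<v$. Solving $\Delta\phi={\rm bal}_{\mathcal O}/2$ gives $\phi(u)=-\tfrac{17}{32}$, $\phi(b_j)=-\tfrac{7}{32}$, $\phi(a_i)=\tfrac{7}{32}$, $\phi(v)=\tfrac{17}{32}$. Hence $x=\delta\phi$ has $x(uv)=\tfrac{17}{16}>1$. Then $\langle \pi_N(e_{uv}^*),q-x\rangle_e=q(uv)-\tfrac{17}{16}<0$ for every nearest lattice point $q$. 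Moving from $x$ in direction $\pi_N(e^*_{uv})$ therefore strictly increases the distance to $N^1_G$. So $x$ is a Voronoi vertex but not a hole.

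With holes defined as local maxima, the proposition itself therefore fails for $K_{4,4}$, and this step cannot be repaired. The paper's proof has the same gap: it appeals to Conway--Sloane for the claim that every Voronoi vertex of a lattice is a hole. What both arguments actually establish is the description of the Voronoi vertices. That description is what the paper's later estimates rely on, since they bound $\|\pi_N(\eta_{\mathcal O}/2)\|_e$ over all acyclic $\mathcal O$.

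If "hole" is read as "Voronoi vertex", your argument is correct and takes a genuinely different route from the paper's.
\begin{itemize}
\item The paper takes the triangulation $\{\mathcal S_\sigma+{\bf q}\}$ of the cut space by flag simplices from Amini--Manjunath. It shows, via Pythagoras and integrality, that each $\mathcal S_\sigma$ has an empty circumsphere centred at $\pi_N(\eta_{\mathcal O_\sigma}/2)$. Hence the triangulation refines the Delaunay subdivision, and the origin is handled by cyclic shifts of $\sigma$.
\item You instead identify $\mathrm{Vor}(0)$ with the zonotope $\pi_N(Q)$ and read off its vertices from the sign vectors of $W$. This is self-contained, gives the full Voronoi cell and its facets, and makes the origin-supported case automatic. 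The paper's route gives the Delaunay cells instead.
\end{itemize}

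Your ``main obstacle'' is also lighter than you fear. The inclusion $\pi_N(Q)\subseteq\mathrm{Vor}(0)$ was already proved against all lattice vectors. For the converse you only need two things:
\begin{itemize}
\item the cut inequalities $\langle x,\delta\iota_U\rangle_e\le\tfrac12|\partial U|$, which points of $\mathrm{Vor}(0)$ satisfy a fortiori;
\item Gale's feasibility theorem applied to $\delta^*h=\delta^*x$ with $|h|\le\tfrac12$.
\end{itemize}
No conformal decomposition is needed.
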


The proof strategy is to construct a simplicial refinement of the Delaunay decomposition of $(N^1_G,||.||_e^2)$. The holes are then the circumcentres of the maximal simplices of this refinement. The holes supported at the origin are 
the circumcentres of maximal simplices with the origin as a vertex. 
\begin{proof}
Let $\{v_1,\dots,v_n\}$ be the set of vertices $V(G)$ of $G$.  For a permutation $\sigma$ on $V(G)$, consider the flag $S_1 \subseteq S_2 \subseteq \dots \subseteq S_n$ where $S_i=\{v_{\sigma(1)},\dots,v_{\sigma(i)}\}$ for $i$ from $1$ to $n$.  Let $\iota_{S_j}: V(G) \rightarrow \{0,1\}$ be the indicator function of $S_j$. Note that the $1$-cochain $\delta(\iota_{S_j}) \in N^1_G$ for all $j$. Furthermore, $\delta(\iota_{S_j})=\sum_{e \in \mathbb{E}(S_j,S_j^c)}(e^{*})$ where $S_j^c:=V(G) \setminus S_j$ and $\mathbb{E}(S_j,S_j^c)$ is the set of oriented edges across the cut $(S_j,S_j^c)$, i.e. $(u,v)$ such that $u \in S_j$ and $v \in S_j^c$. Let $\mathcal{S}_{\sigma}$ be the convex hull of $\{\delta(\iota_{S_j})\}_{j=1}^{n}$. By construction, the polytope $\mathcal{S}_{\sigma}$ is a simplex \cite[Lemma 6.4]{AmiMan10}. Furthermore, the set $\{\mathcal{S}_{\sigma}+ {\bf q}\}_{\sigma \in {S_n},{\bf q} \in N^1_G}$ is a simplicial decomposition of the cut space that is supported on $N^1_G$ \cite[Lemma 6.6]{AmiMan10}. 

In the following, we show that each $\mathcal{S}_{\sigma}$ is a Delaunay simplex of $(N^1_G,||.||_e^2)$, i.e. the circumscribing ball of $\mathcal{S}_{\sigma}$ contains no lattice point ${\bf q} \in N^1_G$ in its interior. Suppose that 
$\mathcal{O}_{\sigma}$ (or simply $\mathcal{O}$) is the acyclic orientation induced by $\sigma$. By construction, for every valid $j$, every oriented edge $e \in \mathbb{E}(S_j,S_j^c)$ is contained in $\mathcal{O}^{-1}(1)$. 
Hence, \begin{center} $||\delta(\iota_{S_j})-\pi_N(\eta_{\mathcal{O}}/2)||_e=||\pi_N(\delta(\iota_{S_j})-\eta_{\mathcal{O}}/2)||_e=||\pi_N(\eta_{\mathcal{O},S_j})||_e/2$ \end{center} where $\eta_{\mathcal{O},S_j}:=\sum_{e \in  \mathbb{E}(S_j,S_j^c)} (e^{*})- \sum_{e \in \mathcal{O}^{-1}(1) \setminus \mathbb{E}(S_j,S_j^c)}(e^{*})$ for all valid $j$. Note that for all $j$, we have $||\eta_{\mathcal{O},S_j}||_e=\sqrt{m}$ where $m$ is the number of edges of $G$.  Furthermore, every point ${\bf q} \in N^1_G$ has integer coordinates and hence, 
$||{\bf q}-\eta_{\mathcal{O}}/2||_e \geq \sqrt{m}/2$. Hence, $||\delta(\iota_{S_j})-\pi_N(\eta_{\mathcal{O}}/2)||_e$ are all equal and by the Pythagoras theorem on $\pi_N$, we deduce that $||\delta(\iota_{S_j})-\pi_N(\eta_{\mathcal{O}}/2)||_e \leq ||{\bf q}-\pi_N(\eta_{\mathcal{O}}/2)||_e$ for all valid $j$ and ${\bf q} \in N^1_G$. Hence,  $\pi_N(\eta_{\mathcal{O}}/2)$ is the circumcentre of the simplex  $\mathcal{S}_{\sigma}$ and the corresponding circumscribing ball contains no lattice point in its interior. 
We conclude that $\pi_N(\eta_{\mathcal{O}}/2)$ is a Voronoi vertex of $(N^1_G,||.||_e^2)$. By general principles \cite[Chapter II, Page 33, Para. 3]{ConSlo99}, every Voronoi vertex of a lattice is a hole.

 Since the set of holes of $N^1_G$ is $N^1_G$-periodic (with respect to translation), we deduce that $\pi_N(\eta_{\mathcal{O}}/2)+{\bf q}$ is a hole for all ${\bf q} \in N^1_G$. 
Furthermore, $\mathcal{S}_{\sigma}$ is contained in the Delaunay cell corresponding to the hole $\pi_N(\eta_{\mathcal{O}}/2)$. Hence, $\cup_{\sigma {\rm ~induces~} \mathcal{O}} \{\mathcal{S}_{\sigma}\}$ decomposes the  Delaunay cell corresponding to $\pi_N(\eta_{\mathcal{O}}/2)$. We conclude that $\{\mathcal{S}_{\sigma}+{\bf q}\}_{\sigma \in S_n,{\bf q} \in  N^1_G}$ is a simplicial refinement of the Delaunay decomposition of $(N^1_G,||.||_e^2)$.  From this, we conclude that there are no other holes of $(N^1_G,||.||_e^2)$.

In the following, we characterise the holes supported at the origin. Note that if the simplex $\mathcal{S}_{\sigma}+{\bf q}$ has the origin as its vertex, then ${\bf q}=-\delta(\iota_{S_i})$ for some $i$ from $1$ to $n$.  In this case, observe that 
$\mathcal{S}_{\sigma}+{\bf q}=\mathcal{S}_{\tilde{\sigma}}$ where $\tilde{\sigma}$ is an $i$-th cyclic shift of $\sigma$. More precisely, $\tilde{\sigma}(1)=\sigma(i+1),\tilde{\sigma}(2)=\sigma(i+2),\dots,\tilde{\sigma}(n-i)=\sigma(n),\tilde{\sigma}(n-i+1)=\sigma(1), \dots,\tilde{\sigma}(n)=i$. Hence, the subset of simplices with the origin as a vertex is $\{\mathcal{S}_{\sigma}\}_{\sigma \in {S_n}}$ and the corresponding holes are $\{\pi_N(\eta_{\mathcal{O}}/2)|~ \mathcal{O}$ is an acyclic orientation on $G \}$.
\end{proof}  


For an orientation $\mathcal{O}$, define ${\rm bal}_{\mathcal{O}}=\sum_{v \in V(G)}({\rm indeg}_{\mathcal{O}}(v)-{\rm outdeg}_{\mathcal{O}}(v))(v)$ where ${\rm indeg}$ and ${\rm outdeg}$ are the indegree and outdegree, respectively.

\begin{corollary}\label{holeslapene_cor}
 The set of holes of the Laplacian-energy pair $(L_G,\mathcal{E}_G)$ is $\{ \dfrac{{\rm bal}_{\mathcal{O}}}{2} + {\bf p}|~ \mathcal{O}$ is an acyclic orientation on $G$ and ${\bf p} \in L_G \}$. The set of holes of $(L_G,\mathcal{E}_G)$  supported at the origin is 
 $\{ \dfrac{{\rm bal}_{\mathcal{O}}}{2}|~ \mathcal{O}$ is an acyclic orientation on $G \}$.
 \end{corollary}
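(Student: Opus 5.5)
We transport Proposition \ref{latcuthol_prop} along the isomorphism of lattice-metric pairs in Proposition \ref{lapcutiso_prop}. By Proposition \ref{sheq_prop}, or directly because an isomorphism of L-M pairs is an isometry carrying the lattice onto the lattice, $\delta^{*}_{\rm res}$ maps the function $h_{||.||_e,N^1_G}$ to $h_{\mathcal{E}_G,L_G}$. Hence it sends local maxima to local maxima, and holes supported at the origin to holes supported at the origin. So the holes of $(L_G,\mathcal{E}_G)$ are exactly the images $\delta^{*}_{\rm res}(\pi_N(\eta_{\mathcal{O}}/2)+{\bf q})$, with ${\bf q}$ ranging over $N^1_G$ and $\mathcal{O}$ over acyclic orientations. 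Since $\delta^{*}_{\rm res}(N^1_G)=L_G$, it remains only to show that $\delta^{*}_{\rm res}(\pi_N(\eta_{\mathcal{O}}/2))={\rm bal}_{\mathcal{O}}/2$.

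For this, we first observe that $\delta^{*}$ kills the orthogonal complement of the cut space. That complement is $\Lambda^1_G\otimes\mathbb{R}=\ker\delta^{*}$, because the two lattices are orthogonal complements of each other and have complementary ranks $n-1$ and $g$ inside the $m$-dimensional space $C^1(G,\mathbb{R})$. Consequently $\delta^{*}\circ\pi_N=\delta^{*}$ on $C^1(G,\mathbb{R})$, and therefore
\[
\delta^{*}_{\rm res}(\pi_N(\eta_{\mathcal{O}}/2))=\tfrac12\,\delta^{*}(\eta_{\mathcal{O}}).
\]

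Now compute $\delta^{*}(\eta_{\mathcal{O}})$ directly from the formula $(\delta^{*}h)(v)=\sum_{e_+=v}h(e)$. The cochain $\eta_{\mathcal{O}}$ takes value $1$ on the oriented edges in $\mathcal{O}^{-1}(1)$ and $-1$ on their inverses. So at a vertex $v$, each edge of $\mathcal{O}$ pointing into $v$ contributes $+1$ and each edge pointing out of $v$ contributes $-1$, since its inverse has head $v$. This gives $\delta^{*}(\eta_{\mathcal{O}})(v)={\rm indeg}_{\mathcal{O}}(v)-{\rm outdeg}_{\mathcal{O}}(v)$, that is, $\delta^{*}(\eta_{\mathcal{O}})={\rm bal}_{\mathcal{O}}$.

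The only point needing care is the sign convention relating heads and tails in $\delta^{*}$ to in- and out-degrees. Even this is harmless: reversing the convention replaces ${\rm bal}_{\mathcal{O}}$ by ${\rm bal}_{\bar{\mathcal{O}}}$, and $\bar{\mathcal{O}}$ is acyclic whenever $\mathcal{O}$ is, so the set of holes is unchanged. The rest is bookkeeping. Translating back through $\delta^{*}_{\rm res}$ gives both descriptions in Corollary \ref{holeslapene_cor}: the full set $\{{\rm bal}_{\mathcal{O}}/2+{\bf p}\}$ with ${\bf p}\in L_G$, and the holes supported at the origin $\{{\rm bal}_{\mathcal{O}}/2\}$.
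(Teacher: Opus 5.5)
Your proposal is correct and follows the same route as the paper. Like the paper, you transport Proposition \ref{latcuthol_prop} through the isomorphism $\delta^{*}_{\rm res}$ of Proposition \ref{lapcutiso_prop} and use $\delta^{*}_{\rm res}(\pi_N(\eta_{\mathcal{O}}))=\delta^{*}(\eta_{\mathcal{O}})={\rm bal}_{\mathcal{O}}$; you also justify the step the paper leaves implicit, namely that $\delta^{*}\circ\pi_N=\delta^{*}$ because $\ker\delta^{*}$ is the orthogonal complement of the cut space, and you check the head/tail convention in the in-degree minus out-degree computation.
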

 
 \begin{proof}
By Propositions \ref{sheq_prop} and \ref{lapcutiso_prop}, these two sets are the images of the corresponding sets of $(N^1_G,||.||_e^2)$ under the map $\delta^{*}_{\rm res}$.  We note that $\delta^{*}_{\rm res}(\pi_N(\eta_{\mathcal{O}}))=\delta^{*}(\eta_{\mathcal{O}})={\rm bal}_{\mathcal{O}}$ and apply Proposition \ref{latcuthol_prop} to complete the proof. 
 \end{proof}

 {\bf Proof of Theorem \ref{lapenehol_theo}:}
  Follows from Corollary \ref{holeslapene_cor} and noting that if $G$ is regular, then $c_{\pi}=\dfrac{{\rm bal}_{\mathcal{O}}}{2}$ for any permutation $\pi$ that induces the acyclic orientation $\mathcal{O}$. \qed


\subsection{A Cheeger Inequality}\label{che_subsect}

In the following, we derive the Cheeger lower bound on the covering radius of ${\rm Crit}_{\triangle}(L_G)$ with respect to $\mathcal{E}_G$. This goes via lower bounds on the sh-radius of $(L_G,\mathcal{E}_G)$. We first attend to this.
By Propositions \ref{sheq_prop} and \ref{lapcutiso_prop}, the sh-radii of $(N^1_G,||.||_e^2)$ and $(L_G,\mathcal{E}_G)$ are equal.   We denote this invariant by  $\tau_G$. 

\begin{lemma}\label{taulb_lem}
The invariant $\tau_G$   satisfies the following lower bounds:
\begin{enumerate}
\item \label{cheeg_ineq} The number $\tau_G  \geq \dfrac{\sqrt{\lfloor n/2 \rfloor \cdot \lambda_{\rm min}}}{2 \sqrt{2}}$.
\item \label{crit_ineq} If $G$ is $k$-regular for an odd $k$, then $\tau_G \geq \dfrac{\sqrt{n}}{2 \sqrt{\lambda_{\rm max}}}$.

\end{enumerate}
where $\lambda_{\rm min}$ and $\lambda_{\rm max}$  are the minimum and maximum non-zero eigenvalues of the Laplacian matrix of $G$, respectively.
\end{lemma}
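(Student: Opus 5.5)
The plan is to reduce $\tau_G$ to the holes supported at the origin and then bound their norms from below in two ways: via a test cut in the cut space for Item (\ref{cheeg_ineq}), and via the spectrum of $\Delta$ together with a parity argument for Item (\ref{crit_ineq}).

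\textbf{Reduction to holes at the origin.} First I will show that the sh-radius is attained at a hole supported at the origin. By Proposition \ref{latcuthol_prop}, every hole of $(N^1_G,||.||_e^2)$ has the form $c=\pi_N(\eta_{\mathcal{O}}/2)+{\bf q}$. Its distance to the nearest lattice point is the circumradius $R_{\mathcal{O}}$ of the Delaunay simplex $\mathcal{S}_\sigma+{\bf q}$. The origin is a lattice point, so $||c||_e\geq h_{||.||_e,N^1_G}(c)=R_{\mathcal{O}}=||\pi_N(\eta_{\mathcal{O}}/2)||_e$. The last equality holds because the origin is a vertex of $\mathcal{S}_\sigma$, which is the case $j=n$, where $\delta(\iota_{S_n})=0$. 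Hence
\[ \tau_G=\min_{\mathcal{O}\ \text{acyclic}} ||\pi_N(\eta_{\mathcal{O}}/2)||_e=\min_{\mathcal{O}\ \text{acyclic}}\sqrt{\mathcal{E}_G({\rm bal}_{\mathcal{O}}/2)}, \]
where the second form uses Corollary \ref{holeslapene_cor}.

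\textbf{Item (\ref{cheeg_ineq}).} I will work in the cut space and lower bound the projection by testing against a single cut vector. For any nonzero $x\in N^1_G\otimes\mathbb{R}$ we have $||\pi_N(y)||_e\geq \langle y,x\rangle_e/||x||_e$. Fix an acyclic $\mathcal{O}$ induced by $\sigma$. Take $x=\delta(\iota_{S})$ with $S=S_j$ the initial segment of size $j=\lfloor n/2\rfloor$. As observed in the proof of Proposition \ref{latcuthol_prop}, every oriented edge of $\mathbb{E}(S,S^c)$ lies in $\mathcal{O}^{-1}(1)$. Therefore $\langle \eta_{\mathcal{O}}/2,\delta(\iota_S)\rangle_e=|E(S,S^c)|/2$ and $||\delta(\iota_S)||_e=\sqrt{|E(S,S^c)|}$, which gives $\tau_G\geq \sqrt{|E(S,S^c)|}/2$. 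The Rayleigh quotient bound with the test function $\iota_S-\frac{|S|}{n}\mathbf{1}$ then gives the standard spectral Cheeger-type estimate
\[ |E(S,S^c)|=\langle \iota_S,\Delta\iota_S\rangle\geq \lambda_{\min}\frac{|S||S^c|}{n}\geq \lambda_{\min}\frac{\lfloor n/2\rfloor}{2}, \]
since $|S^c|\geq n/2$. Substituting yields $\tau_G\geq \sqrt{\lfloor n/2\rfloor\lambda_{\min}}/(2\sqrt2)$.

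\textbf{Item (\ref{crit_ineq}).} I will use the divisor side. On ${\rm Div}^0(G,\mathbb{R})$ the operator $\Delta_{\rm res}^{-1}$ has eigenvalues at least $1/\lambda_{\max}$, so $\mathcal{E}_G(D)\geq ||D||_v^2/\lambda_{\max}$. When $k$ is odd, ${\rm bal}_{\mathcal{O}}(v)={\rm indeg}-{\rm outdeg}\equiv k \pmod 2$ is odd at every vertex. Hence $|{\rm bal}_{\mathcal{O}}(v)/2|\geq 1/2$, so $||{\rm bal}_{\mathcal{O}}/2||_v^2\geq n/4$ and $\tau_G\geq \sqrt{n}/(2\sqrt{\lambda_{\max}})$.

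\textbf{Main obstacle.} The only delicate step is the reduction showing that the minimum norm over all holes is achieved at a hole supported at the origin. The remaining steps are routine inequalities.
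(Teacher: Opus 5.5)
Your proposal is correct and follows essentially the same route as the paper. For Item (\ref{cheeg_ineq}) you test $\eta_{\mathcal{O}}/2$ against the cut vector $\delta(\iota_{S_{\lfloor n/2 \rfloor}})$ of the initial segment of $\sigma$ and bound the cut size spectrally, and for Item (\ref{crit_ineq}) you combine $\mathcal{E}_G(D)\geq ||D||_v^2/\lambda_{\rm max}$ with the parity of ${\rm bal}_{\mathcal{O}}$; the only differences are that you justify explicitly why the sh-radius is attained at holes supported at the origin (the paper reads this off Proposition \ref{latcuthol_prop} directly), and that you derive $|E(S,S^c)|\geq \lambda_{\rm min}|S||S^c|/n$ from the Rayleigh quotient instead of citing the Cheeger inequality, which makes that step self-contained and valid for non-regular $G$ as well.
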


\begin{proof}
({\bf Lower Bound \ref{cheeg_ineq}}:) By Proposition \ref{latcuthol_prop}, the sh-radius of $N^1_G$ is equal to the minimum $||\pi_N(\eta_{\mathcal{O}}/2)||_e$ over all acyclic orientations $\mathcal{O}$ of $G$. In the following, we lower bound $||\pi_N(\eta_{\mathcal{O}}/2)||_e$
for an arbitrary acyclic orientation $\mathcal{O}$.

For a non-zero $\mathcal{C} \in N^1_G \otimes_{\mathbb{Z}} \mathbb{R}$, consider the Rayleigh quotient $R( \mathcal{C}):=\dfrac{|\langle \eta_{\mathcal{O}}/2, \mathcal{C}\rangle_e|}{||\mathcal{C}||_e}$ and 
 note that, by linear algebra,  $||\pi_N(\eta_{\mathcal{O}}/2)||_e \geq R( \mathcal{C})$. Next, we construct an appropriate element 
$\mathcal{C}$ that realises the prescribed lower bound.  Let $\sigma$ be a permutation on $V(G)$ such that $\mathcal{O}$ is the acyclic orientation $\mathcal{O}_\sigma$ induced by $\sigma$. Let $S_{\lfloor n/2 \rfloor}=\{v_{\sigma(1)},\dots,v_{ \sigma( \lfloor n/2 \rfloor)}\}$ and let $\iota_{S_{\lfloor n/2 \rfloor}}$ be its indicator function.  Consider $\langle \eta_{\mathcal{O}}/2, \delta(\iota_{S_{\lfloor n/2 \rfloor}}) \rangle_e$. By construction, $\mathcal{O}^{-1}(1)$ contains the support of $\delta(\iota_{S_{\lfloor n/2 \rfloor}})$ and hence, $\langle \eta_{\mathcal{O}}/2, \delta(\iota_{S_{\lfloor n/2 \rfloor}}) \rangle_e=\dfrac{||\delta(\iota_{S_{\lfloor n/2 \rfloor}})||_e^2}{2}=\dfrac{|\mathbb{E}(S_{\lfloor n/2 \rfloor}, S_{\lfloor n/2 \rfloor}^c)|}{2}$.  Hence, the Rayleigh quotient $R(\delta(\iota_{S_{\lfloor n/2 \rfloor}})) \geq \dfrac{\sqrt{|\mathbb{E}(S_{\lfloor n/2 \rfloor}, S_{\lfloor n/2 \rfloor}^c)|}}{2}$.  By the Cheeger inequality \cite[Theorem 2.4]{HooLinWig06},  $|\mathbb{E}(S_{\lfloor n/2 \rfloor}, S_{\lfloor n/2 \rfloor}^c)| \geq \dfrac{\lambda_{\rm min} \cdot \lfloor n/2 \rfloor}{2}$. Putting these inequalities together, $||\pi_N(\eta_{\mathcal{O}}/2)||_e \geq R(\delta(\iota_{S_{\lfloor n/2 \rfloor}})) \geq \dfrac{\sqrt{\lfloor n/2 \rfloor \cdot \lambda_{\rm min}}}{2 \sqrt{2}}$ and this completes the proof. 

({\bf Lower Bound \ref{crit_ineq}:}) By Theorem \ref{lapenehol_theo},  we have 
\begin{center} $\tau_G={\rm min}_{\pi \in S_n} \sqrt{\mathcal{E}_G(c_\pi)}= {\rm min}_{\pi \in S_n} \sqrt{\langle c_{\pi}, \Delta_{\rm res}^{-1} (c_{\pi})  \rangle_v}$. \end{center}
By the Rayleigh principle,  $\langle c_{\pi}, \Delta_{\rm res}^{-1} (c_{\pi})  \rangle_v \geq \lambda_{\rm min}(\Delta_{\rm res}^{-1}) \cdot ||c_{\pi}||_v^2$ where $\lambda_{\rm min}(\Delta_{\rm res}^{-1})$ is the minimum eigenvalue of $\Delta_{\rm res}^{-1}$. 
By the definition of $\Delta_{\rm res}^{-1}$, the eigenvalue  $\lambda_{\rm min}(\Delta_{\rm res}^{-1})=\lambda_{\rm max}^{-1}$.  Hence,  \begin{equation}\label{ray_eq}  \tau_G \geq  \dfrac{||c_{\pi}||_v}{\sqrt{\lambda_{\rm max}}} \end{equation}
holds.  Note that the $j$-th coordinate  $(c_{\pi})_j=\dfrac{({\rm indeg}_{\pi}(v_j)-{\rm outdeg}_{\pi}(v_j))}{2}$ for every $j$.  Since $k$ is odd, $|(c_{\pi})_j| \geq \frac{1}{2}$ for all $j$. Hence, $||c_{\pi}||_v^2 \geq \dfrac{n}{4}$. Combining this with Equation (\ref{ray_eq}), we conclude that $\tau_G \geq \dfrac{\sqrt{n}}{2 \sqrt{\lambda_{\rm max}}}$. \end{proof}  


\begin{remark}\rm  Some related simplifications and improvements are
\rm \begin{enumerate}
\item If $k$ is odd, then $n$ is even and Lower Bound (\ref{cheeg_ineq}) simplifies to $\dfrac{\sqrt{n \cdot \lambda_{\rm min}}}{4}$.

\item Lower Bound (\ref{cheeg_ineq}) can probably be improved while retaining its ``simplicity'' if we have an explicit orthogonal basis for the cut space. However, constructing such a basis via Gram-Schmidt orthogonalisation of the standard basis $\{\delta(\iota_v)\}_{v \in V(G)}$, where $\iota_v$ is the indicator function at $v$, seems rather involved.  We refer to \cite{Wag98}  for related ideas. 

\item The proof of Lower bound (\ref{crit_ineq}) is not valid for even regular graphs since ${\rm outdeg}_{\pi}(v_j)$ and ${\rm indeg}_{\pi}(v_j)$ can be equal. However, note that the $\ell_2$-norm of $c_{\pi}$ measures the ``imbalance'' in the acyclic orientation induced by $\pi$. Lower bound (\ref{crit_ineq}) can possibly be extended to all regular graphs via this interpretation, we refer to \cite{Bor24} for related work.  
\qed
\end{enumerate} 
\end{remark}

Using Lemma \ref{taulb_lem}, we deduce the key Cheeger inequality on the covering radius of $({\rm Crit}_{\triangle}(L_G),\mathcal{E}_G)$. 


\begin{lemma}\label{coveglg}{\rm {\bf(Cheeger Inequality)}} 
The covering radius ${\rm Cov}_{\mathcal{E}_G}({\rm Crit}_{\triangle}(L_G))$ of ${\rm Crit}_{\triangle}(L_G)$ with respect to $\mathcal{E}_G$   satisfies  \begin{center} ${\rm Cov}_{\mathcal{E}_G}({\rm Crit}_{\triangle}(L_G)) \geq \dfrac{\sqrt{\lfloor n/2 \rfloor \cdot \lambda_{\rm min}}}{2 \sqrt{2}}$. 
\end{center}
 Furthermore,  this lower bound is attained at the origin $O$, i.e. $h_{\mathcal{E}_G,{\rm Crit}_{\triangle}(L_G)}(O)$ satisfies it. 
\end{lemma}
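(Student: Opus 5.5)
The plan is to identify the value $h_{\mathcal{E}_G,{\rm Crit}_{\triangle}(L_G)}(O)$ with the sh-radius $\tau_G$ and then apply Lower Bound (\ref{cheeg_ineq}) of Lemma \ref{taulb_lem}. Throughout I work in the setting of Theorem \ref{lapenehol_theo}, namely $G$ regular, which is the only setting where that theorem is available. For a general graph, ${\rm Crit}_{\triangle}(L_G)=-\pi_0(\mathcal{N}_G)$ need not coincide with the set of holes of $(L_G,\mathcal{E}_G)$, and the argument below would need an extra comparison step that I do not supply. In the applications $G$ is regular, so I regard this as the intended scope of the lemma.

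\textbf{Step 1: evaluate $h$ at the origin.} Since $\mathcal{E}_G$ is a positive definite quadratic form on $H_0={\rm Div}^0(G,\mathbb{R})$, its distance function is symmetric, $d({\bf p},{\bf r})=\sqrt{\mathcal{E}_G({\bf r}-{\bf p})}$. Hence
\[
h_{\mathcal{E}_G,{\rm Crit}_{\triangle}(L_G)}(O)=\min_{c\in {\rm Crit}_{\triangle}(L_G)}\sqrt{\mathcal{E}_G(c)}.
\]
By Theorem \ref{lapenehol_theo}, ${\rm Crit}_{\triangle}(L_G)$ is exactly the set of holes of the Laplacian-energy pair. So the right-hand side is the minimum norm $\sqrt{Q(c)}$ over all holes $c$ of $(L_G,\mathcal{E}_G)$, which is by definition the sh-radius. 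By Propositions \ref{sheq_prop} and \ref{lapcutiso_prop} this sh-radius equals $\tau_G$. The minimum exists because the set of holes is discrete and $L_G$-periodic. If one prefers to restrict to holes supported at the origin, $\{c_\pi\}_{\pi\in S_n}$, periodicity shows nothing is lost. Indeed, every hole $c$ is a vertex of some Delaunay simplex $c+\mathcal{S}$ in the refinement used in Proposition \ref{latcuthol_prop}, transported via $\delta^*_{\rm res}$. Conversely, the distance from $O$ to the hole set is governed by the description $\{{\rm bal}_{\mathcal{O}}/2+{\bf p}\}$ of Corollary \ref{holeslapene_cor}, and the bound in Lemma \ref{taulb_lem} applies to every element of this set, as explained in Step 2.

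\textbf{Step 2: apply the lower bound.} Lemma \ref{taulb_lem}(\ref{cheeg_ineq}) gives $\tau_G\ge \sqrt{\lfloor n/2\rfloor\lambda_{\rm min}}/(2\sqrt2)$. The one point to double-check is that this bound controls $\sqrt{\mathcal{E}_G(c)}$ for every hole $c$, not only for the holes $\pi_N(\eta_{\mathcal{O}}/2)$ attached at the origin. I would verify it on the cut side. A general hole has the form $\pi_N(\eta_{\mathcal{O}}/2)+{\bf q}$ with ${\bf q}\in N^1_G$. By Proposition \ref{latcuthol_prop} its distance to the lattice point $-{\bf q}$ equals its distance to the origin, and this common value is the circumradius of the corresponding Delaunay simplex. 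By the Pythagoras computation in the proof of Proposition \ref{latcuthol_prop}, that circumradius is $\|\pi_N(\eta_{\mathcal{O}}/2)\|_e$. The Rayleigh-quotient bound of Lemma \ref{taulb_lem} therefore applies uniformly. If this bookkeeping turns out to be delicate, the cleanest fallback is to use the translation invariance of the Delaunay decomposition: every hole is at distance at least the circumradius of its Delaunay cell from every lattice point, and in particular from $O\in L_G$.

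\textbf{Step 3: conclude.} Combining Steps 1 and 2, $h_{\mathcal{E}_G,{\rm Crit}_{\triangle}(L_G)}(O)\ge \sqrt{\lfloor n/2\rfloor\lambda_{\rm min}}/(2\sqrt2)$. Since the covering radius is the maximum of $h$ over $H_0$, it is at least its value at $O$, which gives both assertions of the lemma. I expect the main obstacle to be Step 1: justifying cleanly that the minimum over all of ${\rm Crit}_{\triangle}(L_G)$ is controlled by $\tau_G$. This is exactly where regularity, through Theorem \ref{lapenehol_theo}, is indispensable. The remaining steps are direct applications of results already established.
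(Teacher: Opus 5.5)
Your proposal is correct and is essentially the paper's proof. You bound the covering radius below by $h_{\mathcal{E}_G,{\rm Crit}_{\triangle}(L_G)}(O)=\min_{c}\sqrt{\mathcal{E}_G(c)}$, identify this with $\tau_G$ via Theorem \ref{lapenehol_theo}, and apply Lemma \ref{taulb_lem}(\ref{cheeg_ineq}); the paper's proof makes the same identification, so it likewise tacitly needs $G$ regular.

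Your Step 2 only re-verifies the reduction to origin-supported holes that the paper asserts inside the proof of Lemma \ref{taulb_lem}. The fallback there is the right argument: an empty Delaunay ball puts every hole at distance at least its circumradius $\|\pi_N(\eta_{\mathcal{O}}/2)\|_e$ from $O$. The sentence equating the hole's distances to $-{\bf q}$ and to $O$ is misstated, however: the distance to ${\bf q}$ equals the circumradius, and the distance to $O$ is at least it.
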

\begin{proof}
By the definition of covering radius 
\begin{center}
${\rm Cov}_{\mathcal{E}_G}({\rm Crit}_{\triangle}(L_G))={\rm max}_{{\bf p \in H_0}} {\rm min}_{c \in {\rm Crit}_{\triangle}(L_G)}\mathcal{E}_G ({\bf p}-c) \geq  {\rm min}_{c \in {\rm Crit}_{\triangle}(L_G)}\mathcal{E}_G (-c)= {\rm min}_{c \in {\rm Crit}_{\triangle}(L_G)}\mathcal{E}_G (c)=\tau_G$.
\end{center} The remaining inequality follows from Lemma \ref{taulb_lem}. By the definition of $\tau_G$, this lower bound is also attained at the origin.
\end{proof}


\section{ABNE at Half-Canonical Degree}\label{bnhc_sect}

Following the outline in  Section \ref{prooftech_sect}, we prove Theorem \ref{ramranintro_theo} by developing the third and the fourth items in Figure \ref{prftec_fig}. The Cheeger inequality (Lemma \ref{coveglg}) will be the main input to this section. 
The following proposition is a straightforward consequence of the Rayleigh principle and that the non-zero eigenvalues of the matrix underlying $\mathcal{E}_G$ are reciprocals of those of the Laplacian.


\begin{proposition}\label{raycov_prop}
For every point ${\bf p} \in H_0(:=(1,\dots,1)^{\perp})$, 
\begin{center}
$\sqrt{\dfrac{\mathcal{E}_G(\bf p)}{\lambda_{\rm max}(G)}} \leq ||{\bf p}||_2 \leq  \sqrt{\dfrac{\mathcal{E}_G(\bf p)}{ \lambda_{\rm min}(G)}}$
\end{center} 
\end{proposition}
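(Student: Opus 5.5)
The plan is to diagonalise the quadratic form and then compare both sides of the statement of Proposition \ref{raycov_prop} coordinate by coordinate.

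\emph{Setting up the eigenbasis.} For a connected $G$, the Laplacian $\Delta$ is symmetric and positive semidefinite with kernel spanned by $(1,\dots,1)$. Hence $\Delta_{\rm res}$ is a positive definite self-adjoint operator on $H_0={\rm Div}^0(G,\mathbb{R})$. I would fix an orthonormal eigenbasis ${\bf u}_2,\dots,{\bf u}_n$ of $H_0$ with respect to $\langle\,,\rangle_v$, where $\Delta_{\rm res}{\bf u}_i=\lambda_i{\bf u}_i$ and $\lambda_{\rm min}(G)=\lambda_2\le\dots\le\lambda_n=\lambda_{\rm max}(G)$. Writing ${\bf p}=\sum_i a_i{\bf u}_i$, the definition $\mathcal{E}_G({\bf p})=\langle {\bf p},\Delta_{\rm res}^{-1}{\bf p}\rangle_v$ gives
\[
\mathcal{E}_G({\bf p})=\sum_{i\ge 2}\frac{a_i^2}{\lambda_i},\qquad \|{\bf p}\|_2^2=\sum_{i\ge 2}a_i^2 .
\]

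\emph{Termwise comparison.} Using $1/\lambda_{\rm max}\le 1/\lambda_i\le 1/\lambda_{\rm min}$ and summing (this is the Rayleigh principle for $\Delta_{\rm res}^{-1}$, whose eigenvalues are the reciprocals $\lambda_i^{-1}$), I get
\[
\frac{\|{\bf p}\|_2^2}{\lambda_{\rm max}}\le \mathcal{E}_G({\bf p})\le \frac{\|{\bf p}\|_2^2}{\lambda_{\rm min}} .
\]

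\emph{The obstacle: this does not match the statement as written.} The step I expect to be the real obstacle is turning this sandwich into the displayed inequality. Rearranged correctly, it reads
\[
\sqrt{\lambda_{\rm min}\,\mathcal{E}_G({\bf p})}\le\|{\bf p}\|_2\le\sqrt{\lambda_{\max}\,\mathcal{E}_G({\bf p})},
\]
with the eigenvalues multiplying rather than dividing. The divided form in the statement does not follow from it.

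My first check would be to test the statement on an eigenvector ${\bf p}={\bf u}_i$. There $\mathcal{E}_G({\bf u}_i)=1/\lambda_i$, so:
\begin{itemize}
\item the claimed lower bound requires $\lambda_i\lambda_{\rm max}\ge 1$;
\item the claimed upper bound requires $\lambda_i\lambda_{\rm min}\le 1$.
\end{itemize}
Any graph with $\lambda_{\rm min}>1$ violates the upper bound at ${\bf u}_2$, since $\lambda_2\lambda_{\rm min}=\lambda_{\rm min}^2>1$. This covers every almost-Ramanujan family of valence at least five, for which $\lambda_{\rm min}\approx k-2\sqrt{k-1}>1$. Graphs with $\lambda_{\rm min}<1$ violate the lower bound at ${\bf u}_2$ whenever $\lambda_{\rm min}\lambda_{\rm max}<1$.

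So I cannot prove the statement literally. I would present the version that does hold, namely the multiplied form
\[
\sqrt{\lambda_{\rm min}\,\mathcal{E}_G({\bf p})}\le\|{\bf p}\|_2\le\sqrt{\lambda_{\max}\,\mathcal{E}_G({\bf p})}.
\]
I would then check that the later applications only use its left half, $\|{\bf p}\|_2\ge\sqrt{\lambda_{\rm min}\mathcal{E}_G({\bf p})}$, to convert the Cheeger bound of Lemma \ref{coveglg} into $\ell_2$ and then $\ell_1$ bounds. That is the direction this proof establishes.
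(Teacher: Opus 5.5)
Your diagnosis is correct. Your computation is exactly the argument the paper's one-line justification invokes: the Rayleigh principle for $\Delta_{\rm res}^{-1}$, whose eigenvalues are the $\lambda_i^{-1}$. Carried out correctly, it gives $\|{\bf p}\|_2^2/\lambda_{\rm max}\le\mathcal{E}_G({\bf p})\le\|{\bf p}\|_2^2/\lambda_{\rm min}$, which is your sandwich $\sqrt{\lambda_{\rm min}\,\mathcal{E}_G({\bf p})}\le\|{\bf p}\|_2\le\sqrt{\lambda_{\rm max}\,\mathcal{E}_G({\bf p})}$.

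The displayed statement is the correct inequality for the Dirichlet form $\langle {\bf p},\Delta{\bf p}\rangle_v$, not for $\langle {\bf p},\Delta_{\rm res}^{-1}{\bf p}\rangle_v$, so you are right not to try to prove it. As your eigenvector test shows, its lower half holds for all ${\bf p}$ only when $\lambda_{\rm min}\lambda_{\rm max}\ge 1$, and its upper half only when $\lambda_{\rm min}\lambda_{\rm max}\le 1$. So the statement holds only in the degenerate case $\lambda_{\rm min}\lambda_{\rm max}=1$.

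The cleanest counterexample is $K_n$. There $\Delta_{\rm res}=n\cdot{\rm id}$ on $H_0$, so $\mathcal{E}_G({\bf p})=\|{\bf p}\|_2^2/n$, and the printed upper bound reads $\|{\bf p}\|_2\le\|{\bf p}\|_2/n$.

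One small slip: at valence five, $k-2\sqrt{k-1}=1$ exactly. By Alon--Boppana, $\lambda_{\rm min}$ of a large $5$-regular graph is at most $1+o(1)$. So ``$\lambda_{\rm min}>1$'' is only guaranteed for almost-Ramanujan families with $k\ge 6$. This does not affect your argument.

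Your planned downstream check is worth doing, because it changes more than cosmetics.
\begin{itemize}
\item Proposition~\ref{covell1delta_prop} needs the corrected left half, $\mathcal{E}_G({\bf p})\le\|{\bf p}\|_2^2/\lambda_{\rm min}$. This gives the factor $\sqrt{\lambda_{\rm min}}$ where the paper writes $1/\sqrt{\lambda_{\rm max}}$. The $\ell_1$ bound at the origin becomes $\lambda_{\rm min}\sqrt{\lfloor n/2\rfloor}/(2\sqrt{2})$. This is still $\Omega(\sqrt{g})$ for fixed-valence expanders, so the even case survives.
\item Lemma~\ref{enenorup_lem} uses the same inversion. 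The correct bound is $\sqrt{\mathcal{E}_G(D_S)}\le\sqrt{n}/(2\sqrt{\lambda_{\rm min}})$; the paper's $\sqrt{n}/(2\sqrt{\lambda_{\rm max}})$ is actually a lower bound.
\item With the correct bound, the difference $\sqrt{n\lambda_{\rm min}}/4-\sqrt{n}/(2\sqrt{\lambda_{\rm min}})$ in Lemma~\ref{hlbint_lem} is positive only when $\lambda_{\rm min}>2$. For odd fixed valence this holds for almost-Ramanujan families with $k\ge 7$, but fails for $k=5$ by Alon--Boppana. So that argument, as written, does not cover the odd case $k=5$.
\end{itemize}
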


\begin{proposition}\label{covell1delta_prop}
The covering radii ${\rm Cov}_{\ell_1}({\rm Crit}_{\triangle}(L_G))$ and ${\rm Cov}_{\triangle+\bar{\triangle}}({\rm Crit}_{\triangle}(L_G))$  of ${\rm Crit}_{\triangle}(L_G)$ with respect to the $\ell_1$-norm and the distance function $d_{\triangle+\bar{\triangle}}$, respectively  satisfy the following lower bounds:

\begin{center}

${\rm Cov}_{\ell_1}({\rm Crit}_{\triangle}(L_G)) \geq  \dfrac{\sqrt{\lfloor n/2 \rfloor \cdot \lambda_{\rm min}}}{2 \sqrt{2 \cdot \lambda_{\rm max}}}$\\ 

${\rm Cov}_{\triangle+\bar{\triangle}}({\rm Crit}_{\triangle}(L_G)) \geq  \dfrac{\sqrt{\lfloor n/2 \rfloor \cdot \lambda_{\rm min}}}{ 4\sqrt{2 \cdot \lambda_{\rm max}} \cdot n}$.

\end{center}
Both these lower bounds are attained at the origin.
\end{proposition}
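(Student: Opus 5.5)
The plan is to evaluate the distance function $h_{\ell_1,{\rm Crit}_{\triangle}(L_G)}$ at the origin $O$ and bound it below by chaining the Cheeger inequality with the norm comparison of Proposition \ref{raycov_prop}. By definition, the covering radius is at least
\[
h_{\ell_1,{\rm Crit}_{\triangle}(L_G)}(O)={\rm min}_{c \in {\rm Crit}_{\triangle}(L_G)} ||c||_1 .
\]
For every $c$ in this set I will use $||c||_1 \geq ||c||_2$. Next, Proposition \ref{raycov_prop} gives $||c||_2 \geq \sqrt{\mathcal{E}_G(c)/\lambda_{\rm max}}$, which applies because $c \in H_0$. Minimising over $c$ gives ${\rm min}_c \sqrt{\mathcal{E}_G(c)}=\tau_G$, by Theorem \ref{lapenehol_theo} and the definition of the sh-radius (this is exactly the quantity computed in the proof of Lemma \ref{coveglg}). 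Finally, Lower Bound (\ref{cheeg_ineq}) of Lemma \ref{taulb_lem} gives $\tau_G \geq \sqrt{\lfloor n/2\rfloor\lambda_{\rm min}}/(2\sqrt 2)$. Dividing by $\sqrt{\lambda_{\rm max}}$ yields the first inequality, and since every step was evaluated at $O$, the bound is attained at the origin.

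For the second inequality I will compare $d_{\triangle+\bar{\triangle}}$ with the $\ell_1$-norm. The goal is to show $d_{\triangle+\bar{\triangle}}({\bf p},{\bf r}) \geq ||{\bf p}-{\bf r}||_1/(2n)$ for all ${\bf p},{\bf r}$, and then invoke Item (\ref{pwis_it}) of Lemma \ref{pwis-triine_lem} at ${\bf p}=O$. Dividing the first bound by $2n$ gives exactly the stated $\sqrt{\lfloor n/2\rfloor\lambda_{\rm min}}/(4\sqrt{2\lambda_{\rm max}}\,n)$, again attained at the origin.

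The comparison itself amounts to showing that every point ${\bf y}$ of the body $\triangle+\bar{\triangle}$ satisfies $||{\bf y}||_1 \leq 2n$. Then ${\bf r}-{\bf p} \in \mu(\triangle+\bar{\triangle})$ forces $||{\bf r}-{\bf p}||_1 \leq 2n\mu$, which is the claimed inequality. To bound the body, write ${\bf y}={\bf a}-{\bf b}$ with ${\bf a},{\bf b} \in \triangle$. The function $({\bf a},{\bf b}) \mapsto ||{\bf a}-{\bf b}||_1$ is convex on $\triangle \times \triangle$, so its maximum is attained at pairs of vertices ${\bf b_i},{\bf b_j}$. For these, ${\bf b_i}-{\bf b_j}=n({\bf e_i}-{\bf e_j})$, whose $\ell_1$-norm is $2n$ when $i \neq j$ and $0$ when $i=j$.

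The only point needing care is the possible asymmetry of $d_{\triangle+\bar{\triangle}}$. Since the $\ell_1$-norm is symmetric, the bound above holds for either order of arguments, so no issue arises. The main obstacle is therefore just getting the sharp constant $2n$, rather than the naive $4(n-1)$ that comes from the triangle inequality ($||{\bf a}||_1,||{\bf b}||_1 \leq 2(n-1)$). The vertex-pair convexity argument is what secures the sharp constant.
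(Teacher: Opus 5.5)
Your argument is the paper's own proof, written pointwise at the origin, and the $\triangle+\bar{\triangle}$ step is fine. Your vertex-pair convexity argument gives $\triangle+\bar{\triangle}\subseteq 2n\cdot B$, where $B$ is the $\ell_1$-unit ball in $H_0$, which is all you need (the paper asserts the two bodies are exact dilates). The genuine gap is the norm conversion $||c||_2\geq\sqrt{\mathcal{E}_G(c)/\lambda_{\rm max}}$, taken from Proposition~\ref{raycov_prop}. Since $\mathcal{E}_G({\bf p})=\langle {\bf p},\Delta_{\rm res}^{-1}({\bf p})\rangle_v$ and $\Delta_{\rm res}^{-1}$ has eigenvalues $1/\lambda$ for the non-zero Laplacian eigenvalues $\lambda$, the Rayleigh principle gives $||{\bf p}||_2^2/\lambda_{\rm max}\leq\mathcal{E}_G({\bf p})\leq||{\bf p}||_2^2/\lambda_{\rm min}$. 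That is, $\sqrt{\lambda_{\rm min}\,\mathcal{E}_G({\bf p})}\leq||{\bf p}||_2\leq\sqrt{\lambda_{\rm max}\,\mathcal{E}_G({\bf p})}$, so Proposition~\ref{raycov_prop} as printed has the eigenvalues inverted. The proof of Lower Bound~(\ref{crit_ineq}) in Lemma~\ref{taulb_lem} uses the correct form $\mathcal{E}_G(c)\geq||c||_2^2/\lambda_{\rm max}$, which bounds $||c||_2$ from above, not below. The inequality you use holds for all ${\bf p}\in H_0$ only if $\lambda_{\rm min}\lambda_{\rm max}\geq 1$, and it fails on elements of ${\rm Crit}_{\triangle}(L_G)$. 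On the cycle $C_n$ with $n$ even, take the acyclic orientation with a single source $t$ and a single sink $s$ at antipodal vertices. It gives $c=e_s-e_t\in{\rm Crit}_{\triangle}(L_G)$ with $||c||_2=\sqrt{2}$, $\mathcal{E}_G(c)$ equal to the effective resistance $n/4$, and $\lambda_{\rm max}=4$. Hence $\sqrt{\mathcal{E}_G(c)/\lambda_{\rm max}}=\sqrt{n}/4>\sqrt{2}$ for $n>32$. This does not refute the proposition for $C_n$ (there the target is tiny), but it shows your chain does not prove it.

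Correctly converted, your argument yields $h_{\ell_1,{\rm Crit}_{\triangle}(L_G)}(O)\geq\sqrt{\lambda_{\rm min}}\,\tau_G\geq\lambda_{\rm min}\sqrt{\lfloor n/2\rfloor}/(2\sqrt{2})$. Via your $2n$ comparison, the same quantity divided by $2n$ bounds the $\triangle+\bar{\triangle}$ covering radius. This implies the stated constants exactly when $\lambda_{\rm min}\lambda_{\rm max}\geq 1$, for example for $k$-regular $G$ with $\lambda_{\rm min}\geq 1/(k+1)$, since always $\lambda_{\rm max}\geq k+1$. For $\lambda_{\rm min}\lambda_{\rm max}<1$, neither your proposal nor the paper's proof (which relies on the same Proposition~\ref{raycov_prop}) establishes the claimed bound. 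You need either that extra hypothesis or a different norm comparison. Note also that $\min_{c}\sqrt{\mathcal{E}_G(c)}=\tau_G$ uses Theorem~\ref{lapenehol_theo} and hence requires $G$ regular. Otherwise ${\rm Crit}_{\triangle}(L_G)$ is only the translate of the set of energy holes by $-\pi_0(K_G/2)$, and the value at the origin no longer reduces to $\tau_G$. The paper leaves this hypothesis implicit as well.
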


\begin{proof}
By Lemma \ref{pwis-triine_lem}, Item (\ref{pwis_it}) and by Proposition \ref{raycov_prop}, ${\rm Cov}_{\ell_1}({\rm Crit}_{\triangle}(L_G)) \geq {\rm Cov}_{\ell_2}({\rm Crit}_{\triangle}(L_G)) \geq  \dfrac{{\rm Cov}_{\mathcal{E}_G}({\rm Crit}_{\triangle}(L_G))}{\sqrt{\lambda_{\rm max}}}$.  The first inequality then follows from the Cheeger lower bound for the covering radius of $(L_G,\mathcal{E}_G)$  (Lemma \ref{coveglg}).  
The second inequality follows from a simple calculation that the $\ell_1$-unit ball intersected with $H_0$ is a dilation of $\triangle+\bar{\triangle}$ by a factor of $\frac{1}{2n}$.
By Lemma \ref{coveglg}, these lower bounds are attained at the origin. \end{proof}

Recall that for a $k$-regular graph, the canonical divisor $K_G=\sum_{v \in V(G)}(k-2)(v)$. The  \emph{half-canonical divisor} on $G$ is the $\mathbb{Q}$-divisor $\frac{K_G}{2}$.


\begin{proposition}\label{halfcanrank_prop}
Let $G$ be a regular graph. The half-canonical divisor $\frac{K_G}{2}$ satisfies:
\begin{center}
$r(\frac{K_G}{2}) \geq \dfrac{\sqrt{\lfloor n/2 \rfloor \cdot \lambda_{\rm min}}}{4 \sqrt{2 \cdot \lambda_{\rm max}}}-1$.
\end{center}
\end{proposition}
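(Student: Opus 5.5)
The bound follows by chaining Proposition \ref{ell1rank_prop} with the $\ell_1$ part of Proposition \ref{covell1delta_prop}, evaluated at the origin. Since $G$ is regular, Proposition \ref{ell1rank_prop} gives
\[
r\left(\tfrac{K_G}{2}\right)=\dfrac{\min_{c \in {\rm Crit}_{\triangle}(L_G)} ||c||_1}{2}-1 ,
\]
so it suffices to show that
\[
\min_{c \in {\rm Crit}_{\triangle}(L_G)} ||c||_1 \geq \dfrac{\sqrt{\lfloor n/2 \rfloor \cdot \lambda_{\rm min}}}{2\sqrt{2 \cdot \lambda_{\rm max}}} .
\]
The left-hand side is precisely $h_{\ell_1,{\rm Crit}_{\triangle}(L_G)}(O)$. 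The $\ell_1$-norm is symmetric, so $d_{\ell_1}(O,c)=||c||_1$, and there is no orientation issue.

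To bound $h_{\ell_1,{\rm Crit}_{\triangle}(L_G)}(O)$, I will run the norm-conversion chain of Proposition \ref{covell1delta_prop} pointwise at $O$, not at the level of covering radii. First, $||c||_1 \geq ||c||_2$ for every $c \in H_0$. Second, Proposition \ref{raycov_prop} gives $||c||_2 \geq \sqrt{\mathcal{E}_G(c)/\lambda_{\rm max}}$. Taking the minimum over $c \in {\rm Crit}_{\triangle}(L_G)$ and applying Lemma \ref{pwis-triine_lem}, Item (\ref{pwis_it}), yields
\[
h_{\ell_1,{\rm Crit}_{\triangle}(L_G)}(O) \geq \dfrac{h_{\mathcal{E}_G,{\rm Crit}_{\triangle}(L_G)}(O)}{\sqrt{\lambda_{\rm max}}} .
\]
By Lemma \ref{coveglg}, specifically its statement that the Cheeger bound is attained at the origin, we have
\[
h_{\mathcal{E}_G,{\rm Crit}_{\triangle}(L_G)}(O) = \min_{c} \sqrt{\mathcal{E}_G(c)} = \tau_G \geq \dfrac{\sqrt{\lfloor n/2 \rfloor \cdot \lambda_{\rm min}}}{2\sqrt{2}} .
\]
This uses Theorem \ref{lapenehol_theo} to identify ${\rm Crit}_{\triangle}(L_G)$ with the holes of $(L_G,\mathcal{E}_G)$, together with Lemma \ref{taulb_lem}, Item (\ref{cheeg_ineq}). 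Combining these inequalities and dividing by $2$ gives the claimed bound $r(\frac{K_G}{2}) \geq \dfrac{\sqrt{\lfloor n/2 \rfloor \cdot \lambda_{\rm min}}}{4 \sqrt{2 \cdot \lambda_{\rm max}}}-1$.

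The main point needing care is that every step must be evaluated at the origin, not merely at the point realising the covering radius. The rank is governed by the distance from $\pi_0(K_G/2)=O$, not by ${\rm Cov}$, so a bound on ${\rm Cov}_{\ell_1}$ alone would be useless. This is exactly why Lemma \ref{coveglg} and Proposition \ref{covell1delta_prop} record attainment at $O$.

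A secondary point is the normalisation of $\mathcal{E}_G$ versus $\sqrt{\mathcal{E}_G}$ in the covering radius notation. I will treat the distances as $\sqrt{\mathcal{E}_G}$, consistently with $\tau_G$ being defined as a norm, so that Proposition \ref{raycov_prop} applies with square roots as stated. I will also check that the $\mathbb{R}$-divisor rank formula (\ref{rank_eqn}) is legitimate for the $\mathbb{Q}$-divisor $K_G/2$, which is guaranteed by the cited extension \cite[Theorem A.6]{Man-crc22}.
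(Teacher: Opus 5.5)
You follow the paper's own route. You apply Proposition \ref{ell1rank_prop}, then the chain $\|\cdot\|_1\geq\|\cdot\|_2\geq$ (energy) from Proposition \ref{covell1delta_prop}, evaluated at $O=\pi_0(K_G/2)$. The bookkeeping you flag is fine: working at the origin rather than with ${\rm Cov}$, using $\sqrt{\mathcal{E}_G}$ as the distance, and $h_{\mathcal{E}_G,{\rm Crit}_{\triangle}(L_G)}(O)=\tau_G$.

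The step that fails is the second link, $\|c\|_2\geq\sqrt{\mathcal{E}_G(c)/\lambda_{\rm max}}$, which you take from Proposition \ref{raycov_prop}. That proposition has the eigenvalues inverted. $\Delta_{\rm res}^{-1}$ has eigenvalues $1/\lambda$ for the non-zero Laplacian eigenvalues $\lambda$, so the Rayleigh principle gives, for ${\bf p}\in H_0$,
\[
\frac{\|{\bf p}\|_2^2}{\lambda_{\rm max}}\leq\mathcal{E}_G({\bf p})\leq\frac{\|{\bf p}\|_2^2}{\lambda_{\rm min}},
\qquad\text{i.e.}\qquad
\sqrt{\lambda_{\rm min}\,\mathcal{E}_G({\bf p})}\leq\|{\bf p}\|_2\leq\sqrt{\lambda_{\rm max}\,\mathcal{E}_G({\bf p})}.
\]
This is also the direction the paper itself uses in the proof of Lemma \ref{taulb_lem}, Item (\ref{crit_ineq}). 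The printed Proposition \ref{raycov_prop} already fails for $K_n$, where $\mathcal{E}_G({\bf p})=\|{\bf p}\|_2^2/n$. The half you need, $\mathcal{E}_G({\bf p})\leq\lambda_{\rm max}\|{\bf p}\|_2^2$, fails at an eigenvector for $\lambda_{\rm min}$ whenever $\lambda_{\rm min}\lambda_{\rm max}<1$ (e.g.\ the prisms $C_m\times K_2$ for large $m$). This pointwise comparison is your only justification of $\min_{c}\|c\|_2\geq\tau_G/\sqrt{\lambda_{\rm max}}$, so the chain as written does not establish the stated bound. The paper's one-line proof inherits the same defect.

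With the correct inequality, the same chain gives
\[
r(K_G/2)\geq\frac{\sqrt{\lambda_{\rm min}}\,\tau_G}{2}-1\geq\frac{\lambda_{\rm min}\sqrt{\lfloor n/2\rfloor}}{4\sqrt{2}}-1.
\]
This implies the stated bound exactly when $\lambda_{\rm min}\lambda_{\rm max}\geq 1$. It still suffices for the downstream use with bounded even valence: there $\lambda_{\rm min}\geq\mu$, so the right-hand side is $\Omega(\sqrt{g})$.

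For odd $k$ the stated bound holds for a trivial reason. Every $c\in{\rm Crit}_{\triangle}(L_G)$ has the form $\nu-\frac{k-2}{2}(1,\dots,1)$ with $\nu$ integral, so all its coordinates are half-integers. Hence $\|c\|_1\geq n/2$ and $r(K_G/2)\geq n/4-1\geq \sqrt{n}/8-1$, which dominates the claimed right-hand side.

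What your argument leaves unproved is the case of even $k$ with $\lambda_{\rm min}\lambda_{\rm max}<1$. There you would need information specific to the points of ${\rm Crit}_{\triangle}(L_G)$. The comparison between $\|\cdot\|_2$ and $\sqrt{\mathcal{E}_G}$ on all of $H_0$ is sharp with $\lambda_{\rm min}$, not with $1/\lambda_{\rm max}$.
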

\begin{proof}
Follows from Proposition \ref{ell1rank_prop} and Proposition \ref{covell1delta_prop}.
\end{proof}




Note that the lower bound for the rank of the half-canonical divisor, that Proposition \ref{halfcanrank_prop} provides, depends on the spectral gap $\lambda_{\rm min}$ of $G$. The spectral gap can be a decaying function of $n$. For example, for the $n$-cycle  $\lambda_{\rm min} \in O(1/n)$ and $\lambda_{\rm max} \xrightarrow{n \rightarrow \infty} 4$. Hence,  in this case Proposition \ref{halfcanrank_prop} only yields a constant lower bound on $r(\frac{K_G}{2})$. However, for highly-connected graphs such as expander graphs this problem does not occur and Proposition \ref{halfcanrank_prop} yields a lower bound on $r(K_G/2)$ that confirms the Brill-Noether existence conjecture asymptotically.
 


  \subsection{Theorem \ref{ramranintro_theo} for Constant Even Valence}
  
  In the following, we prove the parts of Theorem \ref{ramranintro_theo} that involve even regular graphs of constant valence, i.e. the first part along with the second and third parts when $k$ is even.
  
  \begin{proof}[Proof of (\ref{exp_cas}) and (\ref{ramcon_cas}) with $k$ even]
  Since $k$ is even,  the half-canonical divisor is a ($\mathbb{Z}$)-divisor.  Also, $\mathcal{F}$ is a family of spectral expander graphs (recall that an almost-Ramanujan family is spectral expander)  and hence, there is a $\mu>0$ such that the spectral gap $\lambda_{\rm min}(G_n) \geq \mu$ for all $n \in I$. Note that, by the Gershgorin circle bound, $\lambda_{\rm max}(G_n) \leq 2 \cdot k$. The genus $g(n)=\frac{(k-2)\cdot n}{2}+1$ and since $k$ is fixed, $\sqrt{\lfloor n/2 \rfloor} \in \Omega(\sqrt{g(n)})$. The claim then follows from Proposition \ref{halfcanrank_prop}.
  \end{proof}

  \begin{proof}[ Proof of (\ref{ran_cas}), Theorem \ref{ramranintro_theo} with $k$ even]
   By Theorem \ref{ranalmram_theo},  $\mathcal{F}$ (whether of Type $\mathcal{U}$ or Type $\mathcal{I}$) is almost-Ramanujan (and hence, connected) with high probability.  An almost-Ramanujan family is spectral expander.
   The statement then follows from Part (\ref{exp_cas}).
  \end{proof}

 \subsection{Unbounded Valence}

The lower bound for the rank of the half-canonical divisor from Proposition \ref{halfcanrank_prop} asymptotically agrees with the Brill-Noether existence conjecture for (even) expander graphs with constant valence. However, if the valence 
grows with $n$,  then so does  $\lambda_{\rm max}$ resulting in a bound that is asymptotically strictly weaker than the prediction. To alleviate this problem, we derive another lower bound on the rank of the half-canonical divisor that is based on solutions to a graphical analogue of the \emph{Poisson equation} \cite{ChuYau00}. This eliminates the intermediate step of passing through the $\ell_2$-norm in Proposition \ref{halfcanrank_prop}. 

\begin{proposition}\label{halfcanrankdia_prop}
Let $G$ be a regular graph.  The half-canonical divisor $\frac{K_G}{2}$ satisfies:
\begin{center}
$r(\frac{K_G}{2}) \geq \dfrac{\sqrt{\lfloor n/2 \rfloor \cdot \lambda_{\rm min}}}{2 \sqrt{2 \cdot {\rm diam}(G)}}-1$
\end{center}
where ${\rm diam}(G)$ is the diameter of $G$.


\end{proposition}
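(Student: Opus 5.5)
The plan is to follow the proof of Proposition \ref{halfcanrank_prop}, but to replace the spectral conversion $\ell_1 \geq \ell_2 \geq \sqrt{\mathcal{E}_G/\lambda_{\rm max}}$ with a direct inequality between $\|c\|_1$ and $\sqrt{\mathcal{E}_G(c)}$ that involves the diameter. By Proposition \ref{ell1rank_prop}, $r(K_G/2)=\min_{c\in{\rm Crit}_{\triangle}(L_G)}\|c\|_1/2-1$. By Lemma \ref{coveglg} and the definition of $\tau_G$, every $c\in{\rm Crit}_{\triangle}(L_G)$ satisfies $\sqrt{\mathcal{E}_G(c)}\geq\tau_G\geq \sqrt{\lfloor n/2\rfloor\lambda_{\rm min}}/(2\sqrt2)$. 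So it suffices to show, for every $c\in H_0={\rm Div}^0(G,\mathbb{R})$,
\[ \|c\|_1 \geq \frac{2\sqrt{\mathcal{E}_G(c)}}{\sqrt{{\rm diam}(G)}}. \]
Dividing by two and subtracting one then gives exactly the stated bound.

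To prove this, we solve the Poisson equation $\Delta f=c$ with $f=\Delta_{\rm res}^{-1}(c)$. By the definition of the energy form and the adjointness of $\delta,\delta^*$,
\[ \mathcal{E}_G(c)=\langle c,f\rangle_v=\|\delta f\|_e^2=\sum_{\{u,w\}\in E(G)}(f(u)-f(w))^2. \]
Since $\deg c=0$, we have $\langle c,f\rangle_v=\langle c,f-t\rangle_v$ for any constant $t$. We choose $t=(\max f+\min f)/2$, which gives
\[ \mathcal{E}_G(c)\leq \|c\|_1\cdot\frac{\max f-\min f}{2}. \]

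The key step, and the only real content, is to bound the oscillation of $f$ by the energy. Pick vertices $u^*,w^*$ with $f(u^*)=\max f$ and $f(w^*)=\min f$, and a shortest path between them with $\ell\leq{\rm diam}(G)$ edges. Telescoping along the path and applying Cauchy--Schwarz gives
\[ \max f-\min f\leq\sum_{\text{path}}|f(x)-f(y)|\leq\sqrt{\ell}\Big(\sum_{\text{path}}(f(x)-f(y))^2\Big)^{1/2}\leq\sqrt{{\rm diam}(G)}\,\sqrt{\mathcal{E}_G(c)}, \]
since the path edges are a subset of all edges. Combining the two displays yields $\mathcal{E}_G(c)\leq\|c\|_1\sqrt{{\rm diam}(G)\,\mathcal{E}_G(c)}/2$. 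Dividing by $\sqrt{\mathcal{E}_G(c)}$, which is positive for $c\neq0$ (every hole is nonzero because $\tau_G>0$), gives the claimed norm conversion.

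The main thing to check carefully is the identity $\langle c,\Delta_{\rm res}^{-1}c\rangle_v=\|\delta f\|_e^2$ together with the normalisation $\langle\cdot,\cdot\rangle_e$. The factor $1/2$ in $\langle\cdot,\cdot\rangle_e$ makes the sum run once over each unoriented edge, and with multiple edges each parallel edge contributes separately. This only helps the inequality, since a path uses one copy of each edge. Everything else is routine.
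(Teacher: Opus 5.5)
Your proposal is correct and gives exactly the paper's bound. The outer skeleton is the same as the paper's: Proposition \ref{ell1rank_prop}, the lower bound on $\tau_G$ from Lemma \ref{taulb_lem}, and the norm conversion $\|c\|_1\geq 2\sqrt{\mathcal{E}_G(c)}/\sqrt{{\rm diam}(G)}$ on $H_0$. What differs is how you prove the norm conversion.

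The paper argues by convexity. The $\ell_1$ unit ball in $H_0$ is a polytope with vertices $(e_i-e_j)/2$, so it suffices to show $\mathcal{E}_G(e_i-e_j)\leq{\rm diam}(G)$. It gets this from the Green's function $\mathfrak{g}_{i,j}$: the maximum principle and a level-set (divergence) argument give $|(\delta(\mathfrak{g}_{i,j}))(e)|\leq 1$ on every edge, and then it telescopes along a path.

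You instead use a H\"older/duality step, $\mathcal{E}_G(c)=\langle c,f-t\rangle_v\leq\|c\|_1\,(\max f-\min f)/2$. You then control the oscillation of the potential $f=\Delta_{\rm res}^{-1}(c)$ by Cauchy--Schwarz along a geodesic against $\|\delta f\|_e^2=\mathcal{E}_G(c)$.

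Your route is shorter and self-contained. It bypasses Proposition \ref{gre_prop}, whose proof the paper only sketches, and it handles parallel edges transparently.

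The paper's extreme-point route has a different advantage: it pins down the optimal constant exactly. The best $C$ in $\|p\|_1\geq C\sqrt{\mathcal{E}_G(p)}$ is $2/\sqrt{\max_{i,j}\mathcal{E}_G(e_i-e_j)}$. So the constant is governed by the maximal effective resistance, of which ${\rm diam}(G)$ is only an upper bound, and this ties the estimate to the electrical-network picture of Baker--Shokrieh. Your argument recovers the same sharpening if you replace the path Cauchy--Schwarz by the variational inequality $(f(u)-f(w))^2\leq \mathcal{E}_G(e_u-e_w)\,\|\delta f\|_e^2$.
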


{\bf Proof Sketch:} The proof of Proposition \ref{halfcanrankdia_prop} directly lower bounds the $\ell_1$-norm in terms of the energy quadratic form $\mathcal{E}_G$.  Let $B$ be the $\ell_1$-unit ball intersected with $H_0$.
This method proceeds by upper bounding the energy quadratic form on $B$.   The energy quadratic form is a convex function and $B$ is a convex polytope. Hence, attains its maximum at a vertex of $B$. Hence, it suffices to upper bound $\mathcal{E}_G$ on the vertices of $B$. The vertex set of $B$ is, up to a $1/2$-factor, the root system of type $A$, namely $\{\dfrac{e_i-e_j}{2}\}_{i \neq j}$ where $\{e_1,\dots,e_n\}$ is the standard basis of $\mathbb{R}^n$.  This leads to the maximisation problem ${\rm max}_{i,j} \mathcal{E}_G(e_i-e_j)$.
By definition, $\mathcal{E}_G(e_i-e_j)=\langle e_i-e_j, \Delta_{\rm res}^{-1}(e_i-e_j)\rangle_v$. For any valid $i,j$, there is an $f_{i,j} \in C^0(V,\mathbb{Q})$ such that $f_{i,j}=\Delta_{\rm res}^{-1}(e_i-e_j)$. Such an $f_{i,j}$ is a solution to the Poisson equation  $\Delta(f)=e_i-e_j$
and is a discrete analogue of the \emph{Green's function}. This leads to graph theoretic analogues of Green's functions, a topic taken up in \cite{ChuYau00, BakSho13}. Using this, we upper bound $\langle e_i-e_j, f_{i,j} \rangle_v$. \qed

{\bf Poisson Equation on Graphs:} Let $G$ be a graph and $D \in {\rm Div}(G,\mathbb{Z})$ (also common are ${\rm Div}(G,\mathbb{Q})$ or ${\rm Div}(G,\mathbb{R})$), the equation 

\begin{equation}\label{grapoi_eq}
\Delta(f)=D
\end{equation} 

is called the \emph{Poisson equation} on $G$ (here,  the domain of $\Delta$ is $C^0(G,\mathbb{R})$ and its codomain is ${\rm Div}(G,\mathbb{R})$). The Poisson equation has a solution only if the degree of $D$ is zero.  In this case, it has a solution in $C^0(G,\mathbb{Q})$ that is unique modulo the constant function. Consider the case when $D=(s)-(t)$ for vertices $s,~t$ of $G$. The unique solution $f$  with $f(t)=0$  to the corresponding Poisson equation is called the \emph{Green's function} $\mathfrak{g}_{s,t}$.  We refer to \cite[Section 3.2]{BakSho13} for an interpretation of $\mathfrak{g}_{s,t}$ in terms of electrical networks (where it is referred to as the \emph{$j$-function} $j_t(s,.)$).  In the following, we record its well-known key properties:

\begin{proposition}\label{gre_prop}
For any pair of vertices $s,t$ of $G$, the function $\mathfrak{g}_{s,t}$ has the following properties:
\begin{enumerate}
\item\label{gremaxmin_it} It is maximised and minimised at vertices $s$ and $t$, respectively.

\item\label{grediv_it} {\bf (Green's Theorem)}  For any real $\kappa < \mathfrak{g}_{s,t}(s)$, the sublevel set $S_{\leq \kappa}:=\{v \in V(G)|~\mathfrak{g}_{s,t}(v) \leq \kappa\} \subset V(G)$. It satisfies $(\delta(\mathfrak{g}_{s,t}))(e) >0$ for any $e \in \partial{S_{\leq \kappa}}$ and the following divergence property:

 \begin{center} $\sum_{e \in \partial{S_{\leq \kappa}}} (\delta(\mathfrak{g}_{s,t}))(e)=\sum_{v \in S_{\leq \kappa}} (\Delta(\mathfrak{g}_{s,t}))(v)=1$ \end{center}
where $\partial{S_{\leq \kappa}}$ is $\mathbb{E}(S_{\leq \kappa},S_{\leq \kappa}^c)$ (the set of oriented edges across the cut $(S_{\leq \kappa},S_{\leq \kappa}^c)$) and $(\Delta(\mathfrak{g}_{s,t}))(v)$ is the coefficient of $v$ in the divisor $\Delta(\mathfrak{g}_{s,t})$.
\item\label{greabs_it} For any edge $e \in \mathbb{E}(G)$, the absolute value $|\delta(\mathfrak{g}_{s,t}))(e)| \leq 1$.
\end{enumerate}
\end{proposition}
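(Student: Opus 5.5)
We prove the three items of Proposition \ref{gre_prop} together, using the discrete maximum principle and the divergence identity $\sum_{v \in S}(\Delta f)(v)=\sum_{e \in \partial S}(\delta f)(e)$ (up to the sign convention for $\delta$). Throughout, write $f=\mathfrak{g}_{s,t}$, so that $\Delta f=(s)-(t)$. The equation then says $(\Delta f)(v)=\sum_{(v,u)}(f(v)-f(u))=0$ at every $v\neq s,t$, i.e. $f$ is harmonic away from $s$ and $t$.

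\emph{Item (\ref{gremaxmin_it}).} Let $M$ be the maximum of $f$ and $A=\{v: f(v)=M\}$. If $A$ avoided $s$, then every $v\in A$ would have $(\Delta f)(v)\le 0$, since each term $f(v)-f(u)\ge 0$ and the total is $0$ or $-1$. This forces $f(u)=M$ for all neighbours $u$ of $v$. By connectivity we would get $A=V(G)$, and then $\Delta f=0$, which contradicts $\Delta f=(s)-(t)\neq 0$. Hence $s\in A$. The symmetric argument, applied to the minimum, shows $t$ attains the minimum.

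\emph{Item (\ref{grediv_it}).} Take $\kappa<f(s)$ and set $S=S_{\le\kappa}$. Then $s\notin S$, so $S$ is a proper subset. Summing $\Delta f$ over $S$, each edge with both ends inside $S$ contributes $f(v)-f(u)+f(u)-f(v)=0$. Only the boundary edges survive, which gives $\sum_{v\in S}(\Delta f)(v)=\sum_{(v,u):v\in S,u\notin S}(f(v)-f(u))$. The left side is $-1$ or $0$ according to whether $t\in S$ or not. Since $t\in S$ whenever $S\neq\emptyset$ (by Item (\ref{gremaxmin_it})), the left side is $-1$ in the nonempty case. For the oriented edges $e=(v,u)$ crossing the cut we have $f(u)>\kappa\ge f(v)$, so $f(u)-f(v)>0$. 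Therefore, with the paper's orientation convention for $\partial S$ and $\delta$, each boundary term is positive and the boundary values sum to $1$. The only thing needing care here is matching the sign convention of $\mathbb{E}(S,S^c)$ and $(\delta f)(e)=f(e_+)-f(e_-)$; the paper's convention is arranged so that the positive quantity $f(\text{outside})-f(\text{inside})$ appears.

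\emph{Item (\ref{greabs_it}).} Take any edge $\{u,v\}$ and assume without loss of generality $f(v)<f(u)$; if they are equal the bound is trivial. Set $\kappa=f(v)$. Then $\kappa<f(u)\le f(s)$, so Item (\ref{grediv_it}) applies. The oriented edge from $v$ to $u$ crosses the cut, and its term $f(u)-f(v)$ is one of the positive summands adding to $1$. Hence $|(\delta f)(e)|\le 1$.

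The main obstacle is only bookkeeping: keeping the orientation and sign conventions consistent, and handling the degenerate case $S=\emptyset$. When $S$ is empty the sum is empty and the claimed equality to $1$ fails, so the statement should implicitly assume $\kappa\ge f(t)$. I would state and use that assumption explicitly.
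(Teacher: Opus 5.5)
Your proof is correct and follows essentially the same route as the paper's sketch: the maximum principle for Item 1 (which the paper takes from Baker--Shokrieh), the flux of $\delta(\mathfrak{g}_{s,t})$ through sublevel-set cuts for Item 2, and placing each edge with nonzero gradient on a level cut for Item 3. Your direct cancellation of interior edges is a cleaner substitute for the paper's induction on $\kappa$.

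You are right that the statement needs $\kappa \ge \mathfrak{g}_{s,t}(t)$ (and $s\neq t$). Your own computation also shows that $\sum_{v\in S_{\le\kappa}}(\Delta(\mathfrak{g}_{s,t}))(v)=-1$ whatever convention is used for $\delta$. So the displayed chain needs a genuine sign fix: under the orientation that makes each boundary term positive it reads $\sum_{e\in\partial S_{\le\kappa}}(\delta(\mathfrak{g}_{s,t}))(e)=-\sum_{v\in S_{\le\kappa}}(\Delta(\mathfrak{g}_{s,t}))(v)=1$. Say this outright rather than attributing it to sign conventions.
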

\begin{proof}(Sketch)  We refer to \cite[Section 3.1]{BakSho13} for the proof of Item (\ref{gremaxmin_it}). Item (\ref{grediv_it}) follows from induction on the level $\kappa$. Item (\ref{greabs_it}) follows from Item (\ref{grediv_it}) and the observation that for every oriented edge $e$ with $(\delta(\mathfrak{g}_{s,t}))(e) \neq 0$, either $e$ or $\bar{e}$ 
is in $\partial{S_{\leq \kappa}}$ for some $\kappa>0$.\end{proof}

Proposition \ref{gre_prop} yields the following upper bound on ${\rm max}_{i,j} \mathcal{E}_G(e_i-e_j)$.

\begin{corollary}\label{enedia_cor}
For any $i,j$ from $1$ to $n$, the following upper bound holds:
\begin{center}
$\mathcal{E}_G(e_i-e_j) \leq {\rm diam}(G)$.
\end{center}
\end{corollary}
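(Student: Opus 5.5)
We need $\mathcal{E}_G(e_i-e_j)\le {\rm diam}(G)$. Write $s=v_i$, $t=v_j$, and let $f=\mathfrak{g}_{s,t}$ be the Green's function. The plan is to evaluate the energy directly as $f(s)$ and then bound $f(s)$ by walking along a shortest path, using the edge-gradient bound of Proposition \ref{gre_prop}.

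\textbf{Step 1: the energy equals $f(s)$.} Since $\Delta(f)=(s)-(t)$, the function $f$ differs from $\Delta_{\rm res}^{-1}(e_i-e_j)$ only by a constant function. Pairing with the degree-zero divisor $e_i-e_j$ kills constants. Hence
\[
\mathcal{E}_G(e_i-e_j)=\langle e_i-e_j, f\rangle_v = f(s)-f(t)=f(s),
\]
using the normalisation $f(t)=0$.

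\textbf{Step 2: bound $f(s)$ along a shortest path.} Choose a path $t=u_0,u_1,\dots,u_\ell=s$ in $G$ with $\ell={\rm dist}(s,t)\le{\rm diam}(G)$. Telescoping gives
\[
f(s)=\sum_{r=1}^{\ell}\bigl(f(u_r)-f(u_{r-1})\bigr)=\sum_{r=1}^{\ell}(\delta(f))\bigl((u_{r-1},u_r)\bigr).
\]
By Item (\ref{greabs_it}) of Proposition \ref{gre_prop}, each summand has absolute value at most $1$. Therefore $f(s)\le \ell\le {\rm diam}(G)$.

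The main point needing care is Item (\ref{greabs_it}) itself, which was only sketched. If it is in doubt, the cleanest justification goes through the divergence property. Take any edge $e=(u,w)$ with $f(u)<f(w)$ and set $\kappa=f(u)$. Then $e\in\partial S_{\le\kappa}$, and every boundary term $(\delta f)(e')$ is positive. Since these terms sum to $1$, each one, in particular $(\delta f)(e)$, is at most $1$.

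Alternatively, one can avoid Green's functions altogether. The pair $(L_G,\mathcal{E}_G)$ is isometric to the cut space via Proposition \ref{lapcutiso_prop}, so $\mathcal{E}_G(e_i-e_j)$ equals the minimum of $\|h\|_e^2$ over all $1$-cochains $h$ with $\delta^*h=e_i-e_j$. This holds because the minimum-norm preimage is the one lying in the cut space, which is orthogonal to the flows. Taking $h$ to be the unit flow along a shortest path from $t$ to $s$ (with the sign convention for $\delta^*$) gives $\|h\|_e^2=\ell\le{\rm diam}(G)$ directly. I would present this Thomson-principle argument as the backup, or even as the primary proof, since it is the most robust.
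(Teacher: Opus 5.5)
Your main argument is the paper's proof: you identify $\mathcal{E}_G(e_i-e_j)$ with $\mathfrak{g}_{s,t}(s)$ (normalised by $\mathfrak{g}_{s,t}(t)=0$), then telescope along a shortest path using the edge bound of Item (\ref{greabs_it}) of Proposition \ref{gre_prop}, and your derivation of that bound from the divergence property is the same reasoning the paper sketches. Your Thomson-principle alternative is also correct and avoids Green's functions entirely: the minimum-norm preimage under $\delta^{*}$ lies in the cut space, and the unit flow along a path has squared norm equal to the path length.
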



\begin{proof}
By the definition of the energy quadratic form and the Green's function, 
 $\mathcal{E}_G(e_i-e_j)=\mathfrak{g}_{i,j}(i)-\mathfrak{g}_{i,j}(j)=\mathfrak{g}_{i,j}(i)$ and by Item \ref{gremaxmin_it}, Proposition \ref{gre_prop}, this is ${\rm max}_{v \in V(G)}  \mathfrak{g}_{i,j}(v)$. Suppose that 
$P=(\xi_1,\dots, \xi_l)$ is any directed path from $i$ to $j$.  The evaluation $\mathfrak{g}_{i,j}(i)=|(\delta(\mathfrak{g}_{i,j}))(\xi_1)+\dots+(\delta(\mathfrak{g}_{i,j}))(\xi_l)| \leq  |(\delta(\mathfrak{g}_{i,j}))(\xi_1)|+\dots+|(\delta(\mathfrak{g}_{i,j}))(\xi_l)|$
and by Item (\ref{greabs_it}),  Proposition \ref{gre_prop}, $|(\delta(\mathfrak{g}_{i,j}))(\xi_1)|+\dots+|(\delta(\mathfrak{g}_{i,j}))(\xi_l)| \leq l$.  Hence, the corollary. 
\end{proof}

{\bf Proof of Proposition \ref{halfcanrankdia_prop}:} By Corollary \ref{enedia_cor}, for every ${\bf p} \in H_0$, the inequality
\begin{equation}\label{ell1gre_eq}
||{\bf p}||_1 \geq \dfrac{2 \sqrt{\mathcal{E}_G(\bf p})}{\sqrt{{\rm diam}(G)}}
\end{equation}
holds.  By Proposition \ref{ell1rank_prop}, $r(\frac{K_G}{2})=\frac{{\rm min}_{c \in {\rm Crit}_{\triangle}(L_G)} ||c||_1}{2}-1$.  Hence, $r(\frac{K_G}{2}) \geq \dfrac{\sqrt{ {\rm min}_{c \in {\rm Crit}_{\triangle}(L_G)}  \mathcal{E}_G(c)}}{\sqrt{{\rm diam}(G)}}-1=\dfrac{\tau_G}{\sqrt{{\rm diam}(G)}}-1$.  Combining this with Item (\ref{cheeg_ineq}), Lemma \ref{taulb_lem} yields the claim. \qed

The diameter of almost-Ramanujan graphs is at most logarithmic in the number of vertices, more precisely $O(\log_{k(n)}n)$ \cite{Chu89}.  This fact along with Proposition \ref{halfcanrankdia_prop} gives a $\mathbb{Q}$-version of Part (\ref{ramunb_cas}), Theorem \ref{ramranintro_theo}.



\begin{proposition} With the setting of Part (\ref{ramunb_cas}), Theorem \ref{ramranintro_theo}, the family $\{\frac{K_{G_n}}{2}\}_{n \in I}$ of half-canonical $\mathbb{Q}$-divisors satisfies $r(\frac{K_{G_n}}{2}) \in \Omega(\sqrt{\dfrac{g(n)}{\log_{k(n)}n}})$. \end{proposition}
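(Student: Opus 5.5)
We combine Proposition \ref{halfcanrankdia_prop} with the two inputs on almost-Ramanujan graphs: the spectral gap (Lemma \ref{almramgro_lem}) and the logarithmic diameter bound \cite{Chu89}. Proposition \ref{halfcanrankdia_prop} gives, for every $n\in I$,
\begin{center}
$r(\frac{K_{G_n}}{2}) \geq \dfrac{\sqrt{\lfloor n/2 \rfloor \cdot \lambda_{\rm min}(G_n)}}{2 \sqrt{2 \cdot {\rm diam}(G_n)}}-1$.
\end{center}
The plan is to bound the numerator below by a constant times $\sqrt{g(n)}$, bound the denominator above by a constant times $\sqrt{\log_{k(n)} n}$, and finally check that the additive $-1$ is negligible.

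\emph{Numerator.} By Item (\ref{prod_it}) of Lemma \ref{almramgro_lem}, $n\cdot\lambda_{\rm min}(G_n)\in\Omega(g(n))$. Since $\lfloor n/2\rfloor\geq n/3$ for $n\geq 2$, we get $\lfloor n/2\rfloor\lambda_{\rm min}(G_n)\in\Omega(g(n))$. Explicitly, $g(n)=\frac{(k(n)-2)n}{2}+1$, and the almost-Ramanujan condition gives $\lambda_{\rm min}(G_n)\geq k(n)-2\sqrt{k(n)-1}-\epsilon$. For $k(n)\to\infty$ this is $\Omega(k(n))$, so $n\lambda_{\rm min}(G_n)\in\Omega(n k(n))=\Omega(g(n))$.

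\emph{Denominator.} By \cite{Chu89}, the diameter of a $k$-regular graph is at most $\lceil \log(n-1)/\log(k/\lambda)\rceil$ (up to the precise form), where $\lambda$ is the second largest absolute adjacency eigenvalue. For almost-Ramanujan graphs with $k(n)\to\infty$, $\lambda\le 2\sqrt{k(n)-1}+\epsilon$, so $\log(k/\lambda)\geq \frac12\log k(n) - O(1)$. This gives ${\rm diam}(G_n)\in O(\log_{k(n)} n)$. This is the step needing the most care. Two points must be checked: that the "almost-Ramanujan" hypothesis, stated in terms of the Laplacian gap $\lambda_{\rm min}$, controls the absolute second eigenvalue that Chung's bound uses (it only bounds $k-\lambda_2$, so one should either use the version of Chung's bound involving $\lambda_2$ alone, valid for the bipartite-free/non-bipartite case, or the Laplacian-based diameter bound $\lceil \cosh^{-1}(n-1)/\cosh^{-1}\frac{\lambda_{\max}+\lambda_{\min}}{\lambda_{\max}-\lambda_{\min}}\rceil$); and that ceiling effects are absorbed when $\log_{k(n)} n$ stays bounded away from zero, which holds since $k(n)<n$ gives $\log_{k(n)} n>1$. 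I would use the Laplacian-based version of Chung's diameter bound, which needs only $\lambda_{\rm min}$ and $\lambda_{\rm max}\le 2k(n)$. The resulting ratio is $\geq 1 + c/\sqrt{k(n)}$-type behaviour; this must be checked to yield $\Omega(\log k(n))$ in the denominator of the $\cosh^{-1}$ quotient.

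\emph{Conclusion.} Combining the two bounds gives $r(\frac{K_{G_n}}{2})\geq C\sqrt{g(n)/\log_{k(n)}n}-1$. Since $g(n)/\log_{k(n)} n\geq g(n)\log k(n)/\log n\to\infty$ (as $g(n)\ge n$ eventually), the $-1$ is absorbed for large $n$, proving the claim.
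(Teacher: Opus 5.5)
Your skeleton is exactly the paper's proof: Proposition~\ref{halfcanrankdia_prop}, Item~(\ref{prod_it}) of Lemma~\ref{almramgro_lem} for the numerator, a logarithmic diameter bound, and absorbing the $-1$. Your numerator and absorption steps are fine. The gap is the diameter step as you finally propose to carry it out. With only $\lambda_{\rm min}\ge k-2\sqrt{k-1}-\epsilon$ and $\lambda_{\rm max}\le 2k$, the quotient in the Laplacian-based bound satisfies $\frac{\lambda_{\rm max}+\lambda_{\rm min}}{\lambda_{\rm max}-\lambda_{\rm min}}\ge\frac{3k-2\sqrt{k-1}-\epsilon}{k+2\sqrt{k-1}+\epsilon}\to 3$. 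It does not grow with $k$, so $\cosh^{-1}$ of it stays bounded (about $\cosh^{-1}3\approx 1.76$). You therefore only get ${\rm diam}(G_n)\in O(\log n)$, i.e.\ $r(K_{G_n}/2)\in\Omega(\sqrt{g(n)/\log n})$, which is a factor $\sqrt{\log k(n)}$ short of the claim.

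This is not a bookkeeping issue. A bipartite $k$-regular graph (on at least four vertices) has $\lambda_{\rm max}=2k$ exactly and $\lambda_{\rm min}\le k$, so the quotient is at most $3$. More generally, if you only know that the nontrivial adjacency spectrum lies in $[-k,\,2\sqrt{k-1}+\epsilon]$, the best Chebyshev polynomial for that interval, evaluated at $k$, grows like $T_D(3)$. So no polynomial argument using only this information gives better than order $\log n$. Likewise, there is no version of Chung's bound in terms of the second largest eigenvalue alone for non-bipartite graphs: non-bipartiteness gives no quantitative separation of the bottom eigenvalue from $-k$.

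What is needed, and what the paper uses when it invokes \cite[Theorem 1]{Chu89}, is two-sided control $\max_{i\neq 0}|\lambda_i(A)|\le 2\sqrt{k(n)-1}+\epsilon$. With it, your own first computation $\log(k/\lambda)\ge\frac12\log k(n)-\log 2-o(1)$ gives ${\rm diam}(G_n)\in O(\log_{k(n)}n)$, with the ceiling absorbed since $\log_{k(n)}n>1$. For bipartite families whose nontrivial spectrum lies in $\{\pm k\}\cup[-\mu,\mu]$, run the same argument on $A^2$ restricted to one colour class. Its eigenvalues are $k^2$ and values at most $\mu^2$, so the bound costs only a factor $2$ plus one.

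Your remark that the paper's definition of almost-Ramanujan constrains only $\lambda_{\rm min}$, and so does not literally supply the two-sided bound, is a fair criticism of the paper's one-line argument. That argument implicitly reads the hypothesis in the two-sided form matching the paper's definition of Ramanujan graphs. The repair, however, is to impose (or derive) that two-sided bound, not to switch to the Laplacian diameter bound, which cannot deliver $O(\log_{k(n)}n)$.
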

\begin{proof}
 Consider the lower bound for the rank of $\frac{K_{G_n}}{2}$ from Proposition \ref{halfcanrankdia_prop}.  Since $\mathcal{F}$  is an almost-Ramanujan family,  $\sqrt{\lfloor n/2 \rfloor \cdot \lambda_{\rm min}(G_n)} \in  \Omega(\sqrt{g(n)})$ by  Item (\ref{prod_it}), Lemma \ref{almramgro_lem}.  By \cite[Theorem 1]{Chu89}, the diameter of $G_n$ is in $O(\log_{k(n)}n)$. Hence, the claim. 
 \end{proof}

If each $k(n)$ is even, then the half-canonical divisor $\frac{K_{G_n}}{2}$ is a  $\mathbb{Z}$-divisor and we have:

  \begin{corollary}
  In the case where $k(n)$ is even for all $n$, Part (\ref{ramunb_cas}), Theorem \ref{ramranintro_theo} holds.
 \end{corollary}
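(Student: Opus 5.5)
The corollary follows directly from the preceding proposition, which gives the $\mathbb{Q}$-version of Part (\ref{ramunb_cas}). When every $k(n)$ is even, $K_{G_n}=\sum_v (k(n)-2)(v)$ has even coefficients, so $\frac{K_{G_n}}{2}=\sum_v \frac{k(n)-2}{2}(v)$ is a genuine $\mathbb{Z}$-divisor. Its degree is $\frac{(k(n)-2)n}{2}=|E(G_n)|-n=g(n)-1$, so it has half-canonical degree. I will therefore take $D_n:=\frac{K_{G_n}}{2}$ as the required sequence.

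It remains to check that the rank used in the proposition is the Baker--Norine rank of $D_n$ as a $\mathbb{Z}$-divisor. The proposition computes $r(\frac{K_{G_n}}{2})$ through Proposition \ref{ell1rank_prop}, that is, through formula (\ref{rank_eqn}). That formula is the Baker--Norine degree-plus formula for $\mathbb{Z}$-divisors, and it is extended verbatim to $\mathbb{R}$-divisors in \cite[Theorem A.6]{Man-crc22}. The two notions therefore agree on integral divisors, and no rounding or perturbation step is needed.

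Applying the proposition then gives $r(D_n)\in\Omega\big(\sqrt{g(n)/\log_{k(n)}n}\big)=\Omega\big(\sqrt{g(n)}\cdot 1/\sqrt{\log_{k(n)}n}\big)$. This is exactly ABNE at half-canonical degree up to the factor $1/\sqrt{\log_{k(n)}n}$. The ingredients behind the proposition are Item (\ref{prod_it}) of Lemma \ref{almramgro_lem}, the diameter bound $O(\log_{k(n)} n)$ of \cite{Chu89}, and Proposition \ref{halfcanrankdia_prop}; all hold for every almost-Ramanujan family regardless of parity.

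The only point needing care is the compatibility of the $\mathbb{R}$-divisor rank with the integral rank, which is settled by citing the extension result.
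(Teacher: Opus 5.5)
Your proposal is correct and follows the paper, which justifies the corollary in one line: for even $k(n)$ the half-canonical divisor $\frac{K_{G_n}}{2}$ is a $\mathbb{Z}$-divisor of degree $g(n)-1$, so the preceding proposition (the $\mathbb{Q}$-version of Part (\ref{ramunb_cas})) already supplies the required sequence. Your extra checks are the details the paper leaves implicit: the degree computation, and the agreement of the rank from formula (\ref{rank_eqn}) with the Baker--Norine rank on integral divisors.
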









 

\subsection{From $\mathbb{Q}$ to $\mathbb{Z}$: the Odd Case}\label{qtoz_sect}

In this subsection, we construct $\mathbb{Z}$-divisors promised by Theorem \ref{ramranintro_theo} in the case where the $\mathbb{Q}$-versions are already not integral.  This is precisely for odd regular graphs. The key idea is to construct $\mathbb{Z}$-divisors whose projection onto $H_0$ (via $\pi_0$)  is \emph{``close enough" to the origin} with respect to the \emph{energy metric} $\sqrt{\mathcal{E}_G(.)}$. The triangle inequality on the energy metric along with norm conversion  then yields the lower bounds on the rank. These divisors are constructed by adding suitable \emph{multiples of supports of certain vertex subsets} to the half-canonical divisor.

Let $G$ be an odd regular graph. In this case, the half-canonical divisor has half-integral coefficients and the number of vertices $n$ is even. For a subset $S$ of $V(G)$ of cardinality $n/2$, let $D_S \in {\rm Div}^0(G,\mathbb{Q})$ be defined as follows

\begin{center}
$D_S(v)=
\begin{cases}
~~1/2, \text{ if $v \in S$},\\
-1/2, \text{ otherwise}.
\end{cases}
$.\end{center}

The $\mathbb{Q}$-divisor $D_S$ has degree zero and by the Rayleigh inequality, we have

\begin{lemma}\label{enenorup_lem}
The energy norm $\sqrt{\mathcal{E}_G(D_S)}$ at $D_S$ satisfies:
\begin{center}
$\sqrt{\mathcal{E}_G(D_S)} \leq \dfrac{\sqrt{n}}{ 2 \sqrt{\lambda_{\rm max}(G)}}$.
\end{center}
\end{lemma}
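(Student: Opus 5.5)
The plan is to follow the hint in the text (``by the Rayleigh inequality'') and compare $\mathcal{E}_G(D_S)$ with $\|D_S\|_v^2$ through the spectrum of $\Delta_{\rm res}^{-1}$.

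First, $D_S$ has degree zero because $|S|=n/2$. So $D_S \in {\rm Div}^0(G,\mathbb{Q}) \subset H_0$, and $\mathcal{E}_G(D_S)=\langle D_S,\Delta_{\rm res}^{-1}(D_S)\rangle_v$. Every coordinate of $D_S$ equals $\pm\frac12$, so $\|D_S\|_v^2=n/4$. The nonzero eigenvalues of $\Delta_{\rm res}^{-1}$ are the reciprocals $1/\lambda$ of the nonzero Laplacian eigenvalues. I expand $D_S$ in an orthonormal eigenbasis $\{\phi_\lambda\}$ of $\Delta$ on $H_0$, writing $D_S=\sum_\lambda a_\lambda \phi_\lambda$. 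This gives
\[
\mathcal{E}_G(D_S)=\sum_\lambda \frac{a_\lambda^2}{\lambda}, \qquad \sum_\lambda a_\lambda^2=\frac{n}{4}.
\]
This is the same computation as Proposition \ref{raycov_prop}. The target $\mathcal{E}_G(D_S)\le n/(4\lambda_{\rm max})$ then says exactly that the spectral mass of $D_S$ lies where $1/\lambda \le 1/\lambda_{\rm max}$.

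The main obstacle is the direction of the inequality. The Rayleigh principle applied to $D_S$ alone gives $\sum_\lambda a_\lambda^2/\lambda \ge (n/4)/\lambda_{\rm max}$, which is the reverse of the statement. The general upper bound it gives is only $n/(4\lambda_{\rm min})$. So for an arbitrary balanced $S$ the stated bound can only hold with equality, and it fails whenever $D_S$ has any component outside the top eigenspace. To prove the statement as worded, I would therefore use the freedom in choosing $S$. The aim is to show that for a suitable $S$ the vector $D_S$ is an eigenvector for $\lambda_{\rm max}$, so that equality holds.

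The first case I would try is a bipartite $k$-regular $G$, where $S$ is one colour class; then $|S|=n/2$ automatically. In that case $\Delta(D_S)(v)=k\cdot D_S(v)-\sum_{u\sim v}D_S(u)=2k\,D_S(v)$. Moreover $2k=\lambda_{\rm max}$ by Gershgorin, so $\mathcal{E}_G(D_S)=n/(4\cdot 2k)=n/(4\lambda_{\rm max})$ exactly. For non-bipartite graphs I do not expect an exact statement. There I would fall back on proving the weaker inequality actually needed downstream: choose $S$ to maximise $\langle D_S,\Delta D_S\rangle_v$, which is $|\mathbb{E}(S,S^c)|$ up to a constant factor, via a max-cut or bisection argument, and then apply Cauchy--Schwarz in the form $\mathcal{E}_G(D_S)\cdot\langle D_S,\Delta D_S\rangle_v\ge\|D_S\|_v^4$. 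I expect this non-bipartite step to be where the plan is weakest. I would flag that the lemma should likely be read either with a well-chosen $S$ or with $\lambda_{\rm max}$ replaced by $\lambda_{\rm min}$.
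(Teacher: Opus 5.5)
Your central observation is correct, and the error here is the paper's, not yours. The paper's whole justification is ``by the Rayleigh inequality''. Since the nonzero eigenvalues of $\Delta_{\rm res}^{-1}$ are the reciprocals $1/\lambda$, Rayleigh actually gives
\[
\frac{n}{4\lambda_{\rm max}}\;\le\;\mathcal{E}_G(D_S)\;\le\;\frac{n}{4\lambda_{\rm min}}.
\]
This is the reverse of the stated bound, and the best upper estimate Rayleigh offers is $n/(4\lambda_{\rm min})$. Proposition~\ref{raycov_prop} as printed has the same inversion: Rayleigh gives $\sqrt{\lambda_{\rm min}\,\mathcal{E}_G({\bf p})}\le\|{\bf p}\|_2\le\sqrt{\lambda_{\rm max}\,\mathcal{E}_G({\bf p})}$, so your computation is right and the printed proposition is not.

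You should turn ``fails whenever $D_S$ has a component outside the top eigenspace'' into an explicit refutation. Take $K_{3,3}$ with sides $\{a_1,a_2,a_3\}$ and $\{b_1,b_2,b_3\}$; its Laplacian spectrum is $0,3,3,3,3,6$. Let $S=\{a_1,a_2,b_1\}$. Solving $\Delta f=D_S$ in $H_0$ gives $f=\tfrac{1}{36}(5,5,-7,7,-5,-5)$ in the order $(a_1,a_2,a_3,b_1,b_2,b_3)$. Hence $\mathcal{E}_G(D_S)=\langle D_S,f\rangle_v=17/36>1/4=n/(4\lambda_{\rm max})$. The lemma, and its use in Lemma~\ref{hlbint_lem}, quantify over arbitrary $S$ with $|S|=n/2$, so the statement as written cannot be proved.

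Your own proposal breaks down at the non-bipartite fallback. The inequality $\mathcal{E}_G(D_S)\,\langle D_S,\Delta D_S\rangle_v\ge\|D_S\|_v^4$ is again a \emph{lower} bound on $\mathcal{E}_G(D_S)$. Maximising $\langle D_S,\Delta D_S\rangle_v$ (which equals $|\mathbb{E}(S,S^c)|$ exactly) only weakens that lower bound and yields no upper bound at all.

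If you want to pursue a well-chosen $S$, use the chord bound for the convex function $1/\lambda$ on $[\lambda_{\rm min},\lambda_{\rm max}]$:
\[
\mathcal{E}_G(D)\le\frac{(\lambda_{\rm min}+\lambda_{\rm max})\|D\|_v^2-\langle D,\Delta D\rangle_v}{\lambda_{\rm min}\lambda_{\rm max}}.
\]
This does turn a large bisection into an upper bound. You would still need a quantitative max-bisection estimate for the graph families at hand, and the result would be a different lemma about a chosen $S$.

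Your bipartite colour-class computation is correct, but it has the same limitation of fixing $S$. Also, the random graphs of Theorem~\ref{ranalmram_theo} are non-bipartite with high probability.

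Finally, your suggested repair with $\lambda_{\rm min}$ in place of $\lambda_{\rm max}$ is true but does not support the paper's $k\ge 5$ threshold. In Lemma~\ref{hlbint_lem} it gives $h(D_S)\ge\sqrt{n}\,(\lambda_{\rm min}-2)/(4\sqrt{\lambda_{\rm min}})$. For almost-Ramanujan families this is $\Omega(\sqrt{g})$ only when $k-2\sqrt{k-1}>2$, i.e.\ odd $k\ge 7$ or unbounded valence. The odd case $k=5$ is therefore not recovered.
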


This leads to the following lower bound on $h_{\mathcal{E}_G, {\rm Crit}_{\triangle}(L_G)}(D_S)$.


\begin{lemma}\label{hlbint_lem}
Let $\mathcal{F}$ be a  family of almost-Ramanujan graphs of odd valence that is either a fixed $k \geq 5$ or is unbounded. For any family $\{S^{(n)}\}_{n \in I}$ where $S^{(n)}$ is a subset of size $|V(G_n)|/2$, the asymptotic lower bound
\begin{center}
$h_{\mathcal{E}_{G_n}, {\rm Crit}_{\triangle}(L_{G_n})}(D_{S^{(n)}}) \in \Omega(\sqrt{g(n)})$
\end{center}
holds.
\end{lemma}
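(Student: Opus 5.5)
The plan is to use the triangle inequality for the distance function $h_{\mathcal{E}_{G_n},{\rm Crit}_{\triangle}(L_{G_n})}$ (Lemma \ref{pwis-triine_lem}, Item (\ref{triine_it})), with respect to the energy metric $\sqrt{\mathcal{E}_{G_n}}$, which is symmetric. Applying it with ${\bf p}=O$ (the origin) and ${\bf q}=D_{S^{(n)}}$ gives
\begin{center}
$h_{\mathcal{E}_{G_n},{\rm Crit}_{\triangle}(L_{G_n})}(D_{S^{(n)}}) \geq h_{\mathcal{E}_{G_n},{\rm Crit}_{\triangle}(L_{G_n})}(O)-\sqrt{\mathcal{E}_{G_n}(D_{S^{(n)}})}$.
\end{center}
Here $h$ is measured in the energy norm $\sqrt{\mathcal{E}_G}$, consistent with how $\tau_G$ is defined. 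Note that $D_{S^{(n)}}$ lies in $H_0$, since it has degree zero, so the comparison takes place in the right hyperplane.

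Next I bound the two terms separately. For the first, Lemma \ref{coveglg} says the Cheeger bound is attained at the origin: $h(O)=\tau_{G_n}\geq \sqrt{\lfloor n/2\rfloor \lambda_{\rm min}(G_n)}/(2\sqrt 2)$. Since the valence is odd, $n$ is even, so by the remark after Lemma \ref{taulb_lem} this simplifies to $\sqrt{n\lambda_{\rm min}(G_n)}/4$. For the second, Lemma \ref{enenorup_lem} gives $\sqrt{\mathcal{E}_{G_n}(D_{S^{(n)}})}\leq \sqrt n/(2\sqrt{\lambda_{\rm max}(G_n)})$. Combining,
\begin{center}
$h_{\mathcal{E}_{G_n},{\rm Crit}_{\triangle}(L_{G_n})}(D_{S^{(n)}})\geq \dfrac{\sqrt{n\lambda_{\rm min}(G_n)}}{4}-\dfrac{\sqrt n}{2\sqrt{\lambda_{\rm max}(G_n)}}$.
\end{center}

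The final step is to invoke Item (\ref{diff_it}) of Lemma \ref{almramgro_lem}, which asserts exactly that this difference lies in $\Omega(\sqrt{g(n)})$ for almost-Ramanujan families of constant valence $k\ge5$ or unbounded valence. The bound is uniform in the choice of $S^{(n)}$.

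The only delicate point is this last asymptotic, where the hypothesis $k\geq 5$ enters. One needs $\sqrt{\lambda_{\rm min}}/4$ to beat $1/(2\sqrt{\lambda_{\rm max}})$ by a constant margin. Using $\lambda_{\rm min}\gtrsim k-2\sqrt{k-1}$ and $\lambda_{\rm max}\geq k+1$, this holds for $k=5$ (about $0.29>0.20$) and only improves as $k$ grows. For $k=3$ the comparison fails, which explains the restriction. Since the paper takes Lemma \ref{almramgro_lem} as given, this check is needed only for reassurance, and the proof reduces to the triangle inequality plus the two earlier estimates.
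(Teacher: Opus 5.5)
Your argument is the paper's own proof step for step, and the triangle inequality and the value $h(O)=\tau_{G_n}\geq\sqrt{n\lambda_{\rm min}}/4$ are fine. The step that fails is the bound $\sqrt{\mathcal{E}_{G_n}(D_{S^{(n)}})}\leq\sqrt n/(2\sqrt{\lambda_{\rm max}})$ taken from Lemma~\ref{enenorup_lem}, which has the Rayleigh inequality backwards. On ${\rm Div}^0(G,\mathbb{R})$ the operator $\Delta_{\rm res}^{-1}$ has eigenvalues in $[1/\lambda_{\rm max},1/\lambda_{\rm min}]$. So $\|D_S\|_2^2=n/4$ gives $n/(4\lambda_{\rm max})\leq\mathcal{E}_G(D_S)\leq n/(4\lambda_{\rm min})$, and the quantity you use is a \emph{lower} bound. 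This is the direction correctly used for $c_\pi$ in Item~(\ref{crit_ineq}) of Lemma~\ref{taulb_lem}. For a concrete counterexample, take $K_{3,3}$ (Laplacian spectrum $0,3,3,3,3,6$) and $S=\{a_1,a_2,b_1\}$. Splitting $D_S$ along the $6$-eigenvector $(1,1,1,-1,-1,-1)$ (coordinates $a_1,a_2,a_3,b_1,b_2,b_3$) and the $3$-eigenspace gives $\mathcal{E}(D_S)=\frac{1}{36}+\frac{4}{9}=\frac{17}{36}>\frac14=n/(4\lambda_{\rm max})$.

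With the correct upper bound your chain yields $h(D_{S^{(n)}})\geq\sqrt n\,(\lambda_{\rm min}-2)/(4\sqrt{\lambda_{\rm min}})$. This is $\Omega(\sqrt{g(n)})$ only if $\lambda_{\rm min}(G_n)$ is eventually bounded below by a constant larger than $2$. That holds for unbounded valence and for odd $k\geq 7$ (as $7-2\sqrt6\approx 2.10$), but not for $k=5$. There almost-Ramanujan gives only $\lambda_{\rm min}\geq 1-\epsilon$, and by Alon--Boppana $\lambda_{\rm min}\leq 1+o(1)$ for every $5$-regular family. The comparison is therefore $\sqrt{\lambda_{\rm min}}/4\approx 0.25$ against $1/(2\sqrt{\lambda_{\rm min}})\approx 0.5$, and no sharper spectral estimate rescues it. Your numerical check (which should read $0.25$, not $0.29$) was run against the erroneous bound, which is why it appeared to close.

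So this argument does not prove the $k=5$ case for arbitrary $S^{(n)}$, and the paper's proof has exactly the same defect. To salvage it you must do one of two things:
\begin{itemize}
\item restrict to odd $k\geq 7$;
\item pick $S^{(n)}$ with $\mathcal{E}_{G_n}(D_{S^{(n)}})\leq(1-\eta)\,n\lambda_{\rm min}/16$ for a fixed $\eta>0$. An example is a colour class of a bipartite $G_n$, where $D_S$ is a $2k$-eigenvector and $\mathcal{E}(D_S)=n/(8k)$. However, the ``for any family'' formulation does not allow choosing $S^{(n)}$.
\end{itemize}
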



\begin{proof}
The triangle inequality on $h_{\mathcal{E}_{G_n}, {\rm Crit}_{\triangle}(L_{G_n})}$(Lemma \ref{pwis-triine_lem}, Item (\ref{triine_it})) yields
\begin{center}
$h_{\mathcal{E}_{G_n}, {\rm Crit}_{\triangle}(L_{G_n})}(D_{S^{(n)}}) \geq h_{\mathcal{E}_{G_n}, {\rm Crit}_{\triangle}(L_{G_n})}(O)-\sqrt{\mathcal{E}_{G_n}(D_{S^{(n)}})}$
\end{center}
where $O$ is the origin.  By the Cheeger inequality (Lemma \ref{coveglg}), $h_{\mathcal{E}_{G_n}, {\rm Crit}_{\triangle}(L_{G_n})}(O) \geq \dfrac{\sqrt{n \cdot \lambda_{\rm min}(G_n)}}{4}$. Combining this with Lemma \ref{enenorup_lem}, 
$h_{\mathcal{E}_{G_n}, {\rm Crit}_{\triangle}(L_{G_n})}(D_{S^{(n)}}) \geq \dfrac{\sqrt{n \cdot \lambda_{\rm min}(G_n)}}{4}-\dfrac{\sqrt{n}}{2 \sqrt{{\lambda_{\rm max} (G_n)}}}$.   Lemma \ref{almramgro_lem}, Item (\ref{diff_it}) then completes the proof.  \end{proof}



 


Throughout the following proof, $S^{(n)}$ is an arbitrary vertex subset of $G_n$ of size $n/2$.

{\bf Proof of the Odd Parts of Items (\ref{ramcon_cas}), (\ref{ran_cas}) and Item (\ref{ramunb_cas}) of Theorem \ref{ramranintro_theo}:}
For an odd regular $G_n$,  note that $\frac{K_{G_n}}{2}+D_{S^{(n)}}$ is a $\mathbb{Z}$-divisor of half-canonical degree. 
For the constant valence case, consider the family of divisors $\{\frac{K_{G_n}}{2}+D_{S^{(n)}}\}_{n \in I}$. Note that $\pi_0(\frac{K_{G_n}}{2}+D_{S^{(n)}})=D_{S^{(n)}}$.  Lemma  \ref{hlbint_lem} and Proposition \ref{raycov_prop} (along with  Lemma \ref{pwis-triine_lem} (Item \ref{pwis_it}) and Proposition \ref{ell1rank_prop}) yield $r(\frac{K_{G_n}}{2}+D_{S^{(n)}}) \in \Omega(\sqrt{g(n)})$. This proves Item (\ref{ramcon_cas}). Theorem \ref{ranalmram_theo} then yields Item (\ref{ran_cas}). 
For Item \ref{ramunb_cas}, consider the family of divisors $\{D_n\}_{n \in I}$ defined as follows:

\begin{center}
$D_n=
\begin{cases}
\frac{K_{G_n}}{2}, \text{ if $k(n)$ is even},\\
\frac{K_{G_n}}{2}+D_{S^{(n)}}, \text{ if $k(n)$ is odd}.
\end{cases}
$.\end{center}

By Lemma \ref{hlbint_lem} and Equation (\ref{ell1gre_eq}) (with Proposition \ref{ell1rank_prop}), $r(D_n)$ (as a function of $n$) satisfies the claimed lower bounds.
\qed

\section{Reversal Systems on Graphs}\label{revsys_sect}
 
 We present an application of Brill-Noether existence at half-canonical degree to certain \emph{dynamical systems} on graphs, namely, the \emph{cocycle reversal} system and the \emph{cycle-cocycle reversal} system.
In their basic form, these reversal systems are certain equivalence relations defined on the set of orientations of a graph. We refer to Backman's work \cite{Bac17} for a generalisation to partial orientations. 
The equivalence relations are defined in terms of certain operations called the \emph{cycle reversal} and the \emph{cocycle reversal}. Our definitions of these operations will be informal and we refer to \cite[Sections 1 and 2]{Bac17} for a formal treatment. 

 A \emph {cycle reversal} operation on an orientation reverses a directed cycle.  A \emph{cocycle reversal} operation reverses a consistently oriented directed cut. In a cycle-cocycle reversal system, graph orientations $\mathcal{O}_1$ and 
 $\mathcal{O}_2$ are \emph{equivalent} if there is a finite sequence consisting of cycle and cocycle reversals that transforms  $\mathcal{O}_1$ to $\mathcal{O}_2$. The equivalence relation in the cocycle reversal system is defined analogously allowing only cocycle operations. 
 
 Backman interprets the rank of a ($\mathbb{Z}$-)divisor in terms of certain graphs associated to reversal systems. We shall only recall this result for divisors of degree $g-1$. For a graph $G$, define an undirected graph $\mathcal{G}_{\rm cc}$ called the \emph{path reversal} graph (with respect to cycle-cocycle reversal) as follows. Its vertices are equivalence classes of orientations of $G$ (in the cycle-cocycle reversal system) and its edges are $([\mathcal{O}_1],[\mathcal{O}_2])$ whenever there are orientations $\tilde{\mathcal{O}_1} \in  [\mathcal{O}_1]$ and $\tilde{\mathcal{O}_2} \in  [\mathcal{O}_2]$ such that there is a directed path reversal that transforms $\tilde{\mathcal{O}_1}$ to $\tilde{\mathcal{O}_2}$. 
 The path reversal graph $\mathcal{G}_{\rm cy}$ with respect to the cocycle reversal system is defined analogously.   Studying these path reversal graphs is a natural approach to reversal systems.
 
Recall that the divisor $D_{\mathcal{O}}$ associated to an orientation $\mathcal{O}$ is defined as $\sum_{v \in V(G)}({\rm indeg}_{\mathcal{O}}(v)-1)(v)$. Such a divisor is called \emph{orientable}.  By \cite[Theorem 4.10]{AnBakKupSho14},  every divisor of degree $g-1$ is linearly equivalent to an orientable divisor.  Let $\mathcal{A}=\{[\mathcal{O}_a]|~  
\mathcal{O}_a \text{~is an acyclic orientation on~} G\}$.
 
 \begin{theorem}{\rm \bf{(Backman \cite[Theorem 5.8]{{Bac17}})}} \label{ranpatrev_theo}
Let $D$ be any divisor on $G$ of degree $g-1$ and let $D_{\mathcal{O}}$ be an orientable divisor linearly equivalent to it.  The rank of $D$ is one less than the distance from $[\mathcal{O}]$ to the set $\mathcal{A}$ in $\mathcal{G}_{\rm cc}$.
\end{theorem}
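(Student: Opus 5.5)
The plan is to reduce Theorem \ref{ranpatrev_theo} to the degree-plus formula (\ref{rank_eqn}), $r(D)=\min_{\nu\in\mathcal{N}_G}\deg^+(D-\nu)-1$. The key dictionary is that one path reversal moves exactly one chip. If $P$ is a directed path from $u$ to $w$ in $\mathcal{O}$ and $\mathcal{O}'$ is obtained by reversing $P$, then interior vertices keep their indegree, $u$ gains one and $w$ loses one. Hence $D_{\mathcal{O}'}=D_{\mathcal{O}}+(u)-(w)$.

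\textbf{Ingredients I would take as input.} Two facts drive the argument.
\begin{enumerate}
\item The cycle-cocycle reversal classes of orientations are in bijection with the linear equivalence classes of divisors of degree $g-1$, via $[\mathcal{O}]\mapsto[D_{\mathcal{O}}]$. This is Gioan's theorem, combined with \cite[Theorem 4.10]{AnBakKupSho14} for surjectivity. Cycle reversals do not change $D_{\mathcal{O}}$, and cocycle reversals change it by a Laplacian row, i.e.\ by firing a set.
\item The set $\mathcal{A}$ corresponds exactly to the non-special classes $\mathcal{N}_G$, since every class in $\mathcal{N}_G$ contains some $D_{\mathcal{O}_a}$ with $\mathcal{O}_a$ acyclic.
\end{enumerate}
With these, the distance from $[\mathcal{O}]$ to $\mathcal{A}$ in $\mathcal{G}_{\rm cc}$ becomes a statement about divisor classes. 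I would then prove two inequalities.

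\textbf{Easy inequality: $r(D)+1\le {\rm dist}([\mathcal{O}],\mathcal{A})$.} Take a shortest path $[\mathcal{O}]=[\mathcal{O}_0],[\mathcal{O}_1],\dots,[\mathcal{O}_t]\in\mathcal{A}$ in $\mathcal{G}_{\rm cc}$. By the chip-move identity, each step changes the divisor class by $(u_i)-(w_i)$. Therefore $D\sim \nu+\sum_i (w_i)-\sum_i (u_i)$ with $\nu\in\mathcal{N}_G$. Since $\deg^+$ of such a difference is at most $t$, and (\ref{rank_eqn}) is invariant under replacing $D$ by a linearly equivalent divisor (because $\mathcal{N}_G$ is a union of classes), we get $r(D)+1\le t$.

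\textbf{Hard inequality: ${\rm dist}([\mathcal{O}],\mathcal{A})\le r(D)+1$.} Choose $\nu\in\mathcal{N}_G$ and $D'\sim D$ realizing the minimum in (\ref{rank_eqn}). Write $D'-\nu=E-F$ with $E,F$ effective, disjoint supports, and $\deg E=\deg F=r(D)+1$. I would argue by induction on $\deg E$ that one can pick $u\in{\rm supp}(F)$ and $w\in{\rm supp}(E)$ such that $D+(u)-(w)$ is still orientable via a single path reversal from some representative of $[\mathcal{O}]$. This step is the main obstacle: one must show that a suitable representative in the cycle-cocycle class contains a directed path from some $u$ to some $w$ with the prescribed sign pattern. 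My first attempt would be the following.
\begin{enumerate}
\item Reduce $D'$ with respect to a vertex, using $q$-reduced divisors and Dhar's burning algorithm.
\item Use the fact that if no directed path leads from the ``deficient'' vertices to the ``excess'' vertices, then the set reachable from ${\rm supp}(F)$ is a consistently oriented cut.
\item Reverse that cocycle, which is allowed inside the class, and iterate.
\end{enumerate}
Termination would follow from a potential argument, namely that the reachable set strictly grows modulo firing. If this direct route stalls, I would fall back on Backman's framework of partial orientations and generalized cycle-cocycle reversal \cite{Bac17}. There, the existence of such paths is encoded by a max-flow/min-cut argument (Menger), with the cut condition supplied by the minimality in (\ref{rank_eqn}).
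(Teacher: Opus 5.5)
Your proposal is correct, and it takes a different route from the one the paper relies on. The paper does not prove this statement. It cites Backman, whose argument works inside his framework of partial orientations and the generalized cycle--cocycle reversal system (with an oriented version of Dhar's burning algorithm), and which he then uses to give a new proof of Riemann--Roch. You instead take two black boxes: the degree-plus formula (\ref{rank_eqn}) and the bijection $[\mathcal{O}]\mapsto[D_{\mathcal{O}}]$. What you actually need from the bijection is injectivity, which follows from Gioan's count of cycle--cocycle classes together with surjectivity from \cite{AnBakKupSho14}. You use it at the end, to pass from $D_{\mathcal{O}^{(t)}}\sim\nu\in\mathcal{N}_G$ to $[\mathcal{O}^{(t)}]\in\mathcal{A}$. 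Your easy inequality is fine; it also follows directly from $r(D+(u)-(w))\geq r(D)-1$.

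On the hard inequality you overestimate the obstacle. Your steps 2 and 3 already give a complete proof, and neither step 1 (reduced divisors, Dhar) nor the max-flow fallback is needed. Take any orientation and any disjoint nonempty $A={\rm supp}(F)$, $B={\rm supp}(E)$, and let $R$ be the set of vertices reachable from $A$. If $R\cap B=\emptyset$, every edge of the nonempty cut $(R,R^c)$ points into $R$. Reversing these edges is a cocycle reversal. It leaves the edges inside $R$ untouched, so all directed paths from $A$ into $R$ survive, and it makes every neighbour of $R$ reachable. Hence $|R|$ strictly increases. After at most $n$ rounds, some representative of $[\mathcal{O}]$ contains a directed path from some $u\in A$ to some $w\in B$. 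Reversing it replaces $(D',E,F)$ by $(D'+(u)-(w),E-(w),F-(u))$ with $\nu$ fixed. After $\deg E=r(D)+1$ such steps you reach the class of $\nu$. This step uses neither the minimality of $(D',\nu)$ nor reduced divisors.

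Your route buys brevity, using only inputs this paper already invokes freely. Backman's buys self-containment and covers partially orientable divisors of degree below $g-1$.
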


An analogous statement also holds for the cocycle reversal system. In the following, we transfer the lower bounds on the rank of the half-canonical divisor (and its ``twists") from Section \ref{bnhc_sect} to the diameter of path reversal graphs.

Fix a non-negative even integer $k$. Let $\mathcal{F}=\{G_n\}_{n \in I}$ be a family of $k$-regular spectral expander graphs. Let $\mathcal{G}_{{\rm cc},n}$ and $\mathcal{G}_{{\rm cy},n}$ be the path reversal graphs of $G_n$ with respect to the cycle-cocycle reversal system and the cocycle reversal system, respectively. 

\begin{theorem}\label{prdilb_theo}
The diameters of $\mathcal{G}_{{\rm cc},n}$ and $\mathcal{G}_{{\rm cy},n}$ are both in $\Omega(\sqrt{n})$.
\end{theorem}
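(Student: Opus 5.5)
The plan is to combine Backman's rank interpretation (Theorem \ref{ranpatrev_theo}) with the lower bound on the rank of the half-canonical divisor for even regular expanders (Proposition \ref{halfcanrank_prop}, as used in the proof of Item (\ref{exp_cas}) of Theorem \ref{ramranintro_theo}).

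\textbf{Step 1: an orientation realising the half-canonical class.} Since $k$ is even, $K_{G_n}/2=\sum_v (k/2-1)(v)$ is a $\mathbb{Z}$-divisor of degree $g(n)-1$. Because every vertex has even valence, $G_n$ has an Eulerian orientation $\mathcal{O}_E$, obtained by orienting an Euler circuit. It satisfies ${\rm indeg}_{\mathcal{O}_E}(v)=k/2$ for every $v$, so $D_{\mathcal{O}_E}=K_{G_n}/2$ on the nose, not merely up to linear equivalence. We could also simply invoke \cite[Theorem 4.10]{AnBakKupSho14}.

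\textbf{Step 2: rank lower bound.} The proof of Item (\ref{exp_cas}) already shows that $r(K_{G_n}/2)\in\Omega(\sqrt{g(n)})$. Here $\lambda_{\rm min}(G_n)\ge\mu$, $\lambda_{\rm max}(G_n)\le 2k$, and $g(n)=\frac{(k-2)n}{2}+1$ with $k$ fixed. Proposition \ref{halfcanrank_prop} gives this directly in terms of $n$:
\[
r(K_{G_n}/2)\ \ge\ \frac{\sqrt{\lfloor n/2\rfloor\,\mu}}{4\sqrt{4k}}-1\ \in\ \Omega(\sqrt{n}).
\]
This bound does not need $k\ge 3$ in any essential way beyond what spectral expansion already requires.

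\textbf{Step 3: transfer to the diameter.} By Theorem \ref{ranpatrev_theo}, the distance in $\mathcal{G}_{{\rm cc},n}$ from $[\mathcal{O}_E]$ to the set $\mathcal{A}$ equals $r(K_{G_n}/2)+1$. The set $\mathcal{A}$ is nonempty, since acyclic orientations exist. Hence $\mathcal{G}_{{\rm cc},n}$ has two vertices at distance $r(K_{G_n}/2)+1\in\Omega(\sqrt n)$, so its diameter is at least this. The same argument, applied to the analogue of Theorem \ref{ranpatrev_theo} for the cocycle reversal system mentioned after that theorem, handles $\mathcal{G}_{{\rm cy},n}$.

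Alternatively, we can compare the two graphs directly. Cocycle equivalence refines cycle-cocycle equivalence, and a path reversal between representatives projects to an edge or equality in $\mathcal{G}_{{\rm cc},n}$. So the natural quotient map $\mathcal{G}_{{\rm cy},n}\to\mathcal{G}_{{\rm cc},n}$ is distance non-increasing, and it sends the acyclic classes onto $\mathcal{A}$. Distances in $\mathcal{G}_{{\rm cy},n}$ therefore dominate those in $\mathcal{G}_{{\rm cc},n}$, which gives the bound for $\mathcal{G}_{{\rm cy},n}$ even without the analogue.

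\textbf{Main obstacle.} The step needing care is this last one for $\mathcal{G}_{{\rm cy},n}$. One must check either that the cocycle analogue of Backman's theorem really reads ``distance to the acyclic classes equals rank plus one'', or that the quotient map behaves as claimed on edges. The latter needs only that a path reversal between representatives of two cocycle classes is also a path reversal between representatives of their cycle-cocycle classes. Everything else is a direct combination of earlier results.
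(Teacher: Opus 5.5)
Your proof is correct and is essentially the paper's argument: the Eulerian orientation realises $K_{G_n}/2$ exactly, Item (\ref{exp_cas}) of Theorem \ref{ramranintro_theo} (via Proposition \ref{halfcanrank_prop}) gives $r(K_{G_n}/2)\in\Omega(\sqrt{n})$, and Theorem \ref{ranpatrev_theo} turns this rank bound into a lower bound on the distance from the Eulerian class to $\mathcal{A}$. Your extra quotient-map argument for $\mathcal{G}_{{\rm cy},n}$ is also valid, since cocycle classes refine cycle-cocycle classes and path-reversal edges either map to edges or collapse; it usefully removes the dependence on the cocycle analogue of Backman's theorem, which the paper only asserts.
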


Theorem \ref{prdilb_theo} is a direct consequence of Theorem \ref{ranpatrev_theo} and Item (\ref{exp_cas}), Theorem \ref{ramranintro_theo}. In particular, the divisor associated to any Eulerian orientation on $G$ is half-canonical and in both the reversal systems, the distance of the Eulerian orientation class of $G_n$ from the set $\mathcal{A}(G_n)$  is in $\Omega(\sqrt{n})$.  Analogous results also hold for the random models $\mathcal{U}_{n,k}$ and $\mathcal{I}_{n,k}$ when $k \geq 5$. 

\begin{theorem}\label{prdilbran_theo}
Fix an integer $k \geq 5$.  There is a constant $C$ (independent of $n$) such that a random regular graph $G$ drawn according to $\mathcal{U}_{n,k}$ or $\mathcal{I}_{n,k}$ satisfies the following property with high probability:
\begin{center}
  The diameters of the path reversal graphs with respect to both the cycle-cocycle reversal system and the cocycle reversal system is at least $C \cdot \sqrt{n}$.
 \end{center}
\end{theorem}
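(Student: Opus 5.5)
The argument mirrors the deduction of Theorem \ref{prdilb_theo}, with the deterministic expander input replaced by its probabilistic counterpart, Item (\ref{ran_cas}) of Theorem \ref{ramranintro_theo}. Fix $k \geq 5$ and let $G=G_n$ be drawn from $\mathcal{U}_{n,k}$ or $\mathcal{I}_{n,k}$. By Theorem \ref{ranalmram_theo}, with probability tending to one, $G_n$ is almost-Ramanujan, hence connected and a spectral expander. On this event, Item (\ref{ran_cas}) of Theorem \ref{ramranintro_theo} gives a $\mathbb{Z}$-divisor $D_n$ of degree $g(n)-1$ with $r(D_n) \geq C' \sqrt{g(n)}$ for a constant $C'$ depending only on $k$. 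Concretely, $D_n=\frac{K_{G_n}}{2}$ if $k$ is even and $D_n=\frac{K_{G_n}}{2}+D_{S^{(n)}}$ if $k$ is odd. Since $g(n)=\frac{(k-2)n}{2}+1$ (with $n$ replaced by $2n$ for Type $\mathcal{I}$), this bound is $\Omega(\sqrt{n})$.

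Next I transfer the rank bound to the path reversal graphs. By \cite[Theorem 4.10]{AnBakKupSho14}, $D_n$ is linearly equivalent to an orientable divisor $D_{\mathcal{O}}$. Theorem \ref{ranpatrev_theo} then gives that the distance in $\mathcal{G}_{{\rm cc},n}$ from $[\mathcal{O}]$ to the set $\mathcal{A}$ equals $r(D_n)+1 \geq C'\sqrt{g(n)}$. The set $\mathcal{A}$ is nonempty, since acyclic orientations exist. Hence some pair of vertices of $\mathcal{G}_{{\rm cc},n}$ lies at distance at least $C'\sqrt{g(n)}$, and therefore ${\rm diam}(\mathcal{G}_{{\rm cc},n}) \geq C\sqrt{n}$ for a suitable constant $C$.

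For the cocycle reversal system, I invoke the analogue of Theorem \ref{ranpatrev_theo} stated after it, which gives the same distance bound in $\mathcal{G}_{{\rm cy},n}$. A shortcut is also available. Every cycle-cocycle class is a union of cocycle classes, and a path reversal in $\mathcal{G}_{{\rm cy},n}$ induces an edge or a loop in $\mathcal{G}_{{\rm cc},n}$. So the quotient map does not increase distances, and distances in $\mathcal{G}_{{\rm cy},n}$ dominate those in $\mathcal{G}_{{\rm cc},n}$. Under either route the same event, and hence the same probability tending to one, covers both systems.

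The main obstacle is only bookkeeping. First, the constant $C$ must be uniform in $n$. Item (\ref{ran_cas}) supplies this through Lemma \ref{almramgro_lem}: the almost-Ramanujan bound with a fixed $\epsilon$ gives $\lambda_{\rm min} \geq k-2\sqrt{k-1}-\epsilon$, and Gershgorin gives $\lambda_{\rm max} \leq 2k$. Second, the computation must be done on the conditioned event that $G_n$ is connected, as the definition of ABNE with high probability requires. Finally, Type $\mathcal{I}$ graphs may be multigraphs, which is harmless because both Backman's theorem and orientability hold for multigraphs.
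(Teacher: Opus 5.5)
Your proposal is correct and follows the paper's route: the paper treats this theorem as a direct consequence of Backman's Theorem \ref{ranpatrev_theo} (and its cocycle analogue) combined with Item (\ref{ran_cas}) of Theorem \ref{ramranintro_theo}, which is exactly your argument. Your shortcut, that the quotient map $\mathcal{G}_{{\rm cy},n}\to\mathcal{G}_{{\rm cc},n}$ is surjective and does not increase distances, is also valid, and it gives the cocycle case without appealing to the cocycle analogue of Backman's theorem.
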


The corresponding statements for high degree Ramanujan graphs are as follows.

\begin{theorem}\label{ramprdilb_theo}
 Let $\mathcal{F}=\{G_n\}_{n \in I}$ be a family of almost-Ramanujan graphs and let $k(n) \geq 3$ be the valence of $G_n$. The diameters of $\mathcal{G}_{{\rm cc},n}$ and $\mathcal{G}_{{\rm cy},n}$ are both in $\Omega(\sqrt{n})$ if $k(n)=k \geq 5$ is a constant and in $\Omega(\sqrt{\dfrac{g(n)}{\log_{k(n)}n}})$ if $k(n)$ is unbounded.
\end{theorem}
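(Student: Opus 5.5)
The plan is to combine Backman's rank interpretation (Theorem \ref{ranpatrev_theo}) with the integral divisors built in Section \ref{bnhc_sect}. For each $n$, let $D_n$ be the $\mathbb{Z}$-divisor of degree $g(n)-1$ from the proof of Item (\ref{ramcon_cas}) (constant $k \geq 5$) or Item (\ref{ramunb_cas}) (unbounded valence) of Theorem \ref{ramranintro_theo}. Concretely, $D_n=\frac{K_{G_n}}{2}$ when $k(n)$ is even, and $D_n=\frac{K_{G_n}}{2}+D_{S^{(n)}}$ when $k(n)$ is odd. By \cite[Theorem 4.10]{AnBakKupSho14}, $D_n$ is linearly equivalent to an orientable divisor $D_{\mathcal{O}_n}$. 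By Theorem \ref{ranpatrev_theo}, the distance in $\mathcal{G}_{{\rm cc},n}$ from $[\mathcal{O}_n]$ to $\mathcal{A}$ equals $r(D_n)+1$. Since $\mathcal{A}$ is a nonempty set of vertices of $\mathcal{G}_{{\rm cc},n}$, the diameter is at least this distance, so ${\rm diam}(\mathcal{G}_{{\rm cc},n}) \geq r(D_n)+1$. The same argument applies verbatim to $\mathcal{G}_{{\rm cy},n}$ through the cocycle analogue of Theorem \ref{ranpatrev_theo} stated after it. One could also note that the cocycle classes refine the cycle–cocycle classes, so distances in $\mathcal{G}_{{\rm cy},n}$ dominate those in $\mathcal{G}_{{\rm cc},n}$, but invoking the analogue directly is cleaner.

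It then remains to insert the rank bounds. In the constant valence case $k \geq 5$, the earlier proof gives $r(D_n) \in \Omega(\sqrt{g(n)})$ via Lemma \ref{hlbint_lem}, Proposition \ref{raycov_prop} and Proposition \ref{ell1rank_prop} in the odd case, and via Proposition \ref{halfcanrank_prop} together with the spectral expander property in the even case. Since $g(n)=\frac{(k-2)n}{2}+1$ with $k$ fixed, $\sqrt{g(n)} \in \Theta(\sqrt{n})$, which gives the $\Omega(\sqrt{n})$ diameter bound. In the unbounded case, Proposition \ref{halfcanrankdia_prop} (even $k(n)$), or Lemma \ref{hlbint_lem} combined with Inequality (\ref{ell1gre_eq}) (odd $k(n)$), together with the $O(\log_{k(n)} n)$ diameter bound of \cite{Chu89}, gives $r(D_n) \in \Omega(\sqrt{g(n)/\log_{k(n)}n})$. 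This yields the stated bound.

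The main point requiring care is that Theorem \ref{ranpatrev_theo} concerns $\mathbb{Z}$-divisors, so one must use the integral twist $\frac{K_{G_n}}{2}+D_{S^{(n)}}$ in the odd case rather than the $\mathbb{Q}$-divisor $\frac{K_{G_n}}{2}$. Its rank is bounded through $h_{\mathcal{E}_G,{\rm Crit}_{\triangle}(L_G)}$ at $\pi_0(D_n)=D_{S^{(n)}}$, i.e.\ through $\min_{\nu \in \mathcal{N}_G} {\rm deg}^+(D_n-\nu)$ expressed as half an $\ell_1$ distance to ${\rm Crit}_{\triangle}(L_G)$. One must check that the $\ell_1$ form of Proposition \ref{ell1rank_prop} holds at an arbitrary point $\pi_0(D)$ and not only at the origin; this follows from Formula (\ref{rank_eqn}) and Theorem \ref{critnonspe_theo} because $G$ is regular. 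A second minor check is that, for $k \geq 5$, the quantity in Lemma \ref{almramgro_lem}, Item (\ref{diff_it}) is eventually positive and of order $\sqrt{g(n)}$. Finally, since the rank is nonnegative here for large $n$, no degenerate case where $\mathcal{A}$ contains $[\mathcal{O}_n]$ itself arises.
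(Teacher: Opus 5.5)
Your proposal is correct and is essentially the paper's argument. The paper states this theorem as a direct consequence of Backman's Theorem \ref{ranpatrev_theo} (and its cocycle analogue) combined with Items (\ref{ramcon_cas}) and (\ref{ramunb_cas}) of Theorem \ref{ramranintro_theo}, via the bound ${\rm diam} \geq r(D_n)+1$ that you derive; your explicit choice of $D_n$ (namely $\frac{K_{G_n}}{2}$ for even valence and $\frac{K_{G_n}}{2}+D_{S^{(n)}}$ for odd valence) and your listed checks just spell out what the paper leaves implicit.
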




Theorems  \ref{prdilbran_theo} and \ref{ramprdilb_theo} are direct consequences of Theorem \ref{ranpatrev_theo} combined with Items (\ref{ramcon_cas}),(\ref{ran_cas}) and (\ref{ramunb_cas}) of Theorem \ref{ramranintro_theo}.


%




\section{Going Beyond Asymptotics and Half-Canonical Degrees}\label{bey_sect}

Both these directions seem to need other quadratic forms on the Laplacian lattice with the same set ${\rm Crit}_{\triangle}(L_G)$ as its set of holes. For this, the following weighted generalisation of the energy quadratic form may be helpful.  Let $w: \mathbb{E}(G) \rightarrow \mathbb{R}_{>0}$
be a function such that  $w(e)=w(\bar{e})$ for every oriented edge $e$. The $w$-weighted squared Euclidean norm on $C^{1}(G,\mathbb{R})$ is given by \begin{center} $||\sum_{e \in \mathbb{E}(G)} \alpha_e (\iota_e)||_{e,w}^2:=\sum_{e \in \mathbb{E}(G)} w_e \alpha_e^2$ \end{center}
where $w_e:=w(e)$ and $\iota_e$ is the indicator function at $e$. 

\begin{definition}{\rm {\bf (Weighted Energy Quadratic Form)}}
The $w$-weighted energy quadratic form $\mathcal{E}_{G,w}$ on $L_G$ is the pullback of the $w$-weighted squared Euclidean norm along the map $(\delta^{*}_{\rm res})^{-1}$. 
\end{definition}

Note that the weight function $w$ being constant one corresponds to the energy quadratic form. Theorem \ref{lapenehol_theo} can be generalised to the \emph{weighted-Laplacian-energy pair} $(\mathcal{E}_{G,w},L_G)$.

\begin{theorem}
Let $G$ be a regular, connected graph.  The set of holes of the weighted-Laplacian-energy pair  $(\mathcal{E}_{G,w},L_G)$ is ${\rm Crit}_{\triangle}(L_G)$.\end{theorem}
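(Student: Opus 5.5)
The plan is to repeat the argument of Proposition \ref{latcuthol_prop} and Corollary \ref{holeslapene_cor}, replacing $\|\cdot\|_e$ by $\|\cdot\|_{e,w}$ throughout. Then, as in the proof of Theorem \ref{lapenehol_theo}, regularity identifies ${\rm bal}_{\mathcal{O}}/2$ with $c_\pi$ and hence the holes with ${\rm Crit}_{\triangle}(L_G)$. The simplices do not change: $\mathcal{S}_\sigma={\rm CH}\{\delta(\iota_{S_j})\}_{j=1}^n$ in the cut space, with $\{\mathcal{S}_\sigma+{\bf q}\}$ still a simplicial decomposition supported on $N^1_G$ (\cite[Lemmas 6.4, 6.6]{AmiMan10}). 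Only the circumcentres and the empty-ball property must be rechecked in the weighted metric.

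\textbf{Step 1 (weighted circumcentre).} Let $\pi^w_N$ be the $\|\cdot\|_{e,w}$-orthogonal projection onto $N^1_G\otimes\mathbb{R}$. The two identities from the unweighted proof carry over.
\begin{itemize}
\item For every $j$, the cochain $\eta_{\mathcal{O},S_j}$ has all entries $\pm1$, so $\|\eta_{\mathcal{O},S_j}\|_{e,w}^2=\sum_{e}w_e$, which is independent of $j$.
\item Every integral ${\bf q}$ differs from $\eta_{\mathcal{O}}/2$ by at least $1/2$ in every coordinate, so $\|{\bf q}-\eta_{\mathcal{O}}/2\|_{e,w}^2\ge \tfrac14\sum_e w_e$.
\end{itemize}
Pythagoras for $\pi^w_N$ then shows that $\pi^w_N(\eta_{\mathcal{O}}/2)$ is equidistant from the vertices of $\mathcal{S}_\sigma$ and that no lattice point lies strictly closer. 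Hence each $\mathcal{S}_\sigma$ is a weighted Delaunay simplex with circumcentre $\pi^w_N(\eta_{\mathcal{O}_\sigma}/2)$. Since these simplices tile the cut space, the same refinement argument as before shows that the holes of $(N^1_G,\|\cdot\|_{e,w}^2)$ are exactly $\pi^w_N(\eta_{\mathcal{O}}/2)+N^1_G$ over acyclic $\mathcal{O}$.

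\textbf{Step 2 (transfer to $L_G$).} Here I expect the main obstacle. Corollary \ref{holeslapene_cor} used $\delta^*(\pi_N(\eta_{\mathcal{O}}))=\delta^*(\eta_{\mathcal{O}})$, which holds because the Euclidean complement of the cut space is the flow space $\ker\delta^*$. The $w$-complement of the cut space is instead $\{h:\ w h\in\Lambda^1_G\otimes\mathbb{R}\}$, which $\delta^*$ need not kill. So if $\mathcal{E}_{G,w}$ is literally the $w$-norm of the unique cut-space preimage, the circumcentre $\delta^*\pi^w_N(\eta_{\mathcal{O}}/2)$ need not equal ${\rm bal}_{\mathcal{O}}/2$.

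I would resolve this by reading the pullback variationally, in the spirit of the electrical-network interpretation \cite{BakSho13} (Thomson's principle): set $\mathcal{E}_{G,w}(D)=\min\{\|x\|_{e,w}^2:\ \delta^*x=D\}$. This is still a positive definite quadratic form. It makes $\delta^*$ an isometry from the $w$-orthogonal complement of $\ker\delta^*$, call it $W$, onto ${\rm Div}^0(G,\mathbb{R})$. I would then redo Step 1 with $W$ in place of the cut space and $\pi^W$ the $w$-orthogonal projection onto $W$. The lattice becomes the preimage of $L_G$ in $W$, with vertices the $\pi^W$-images of the $\delta(\iota_{S_j})$. Because $\ker\delta^*$ is exactly the flow space, we get $\delta^*\pi^W=\delta^*$, and hence $\delta^*\pi^W(\eta_{\mathcal{O}}/2)={\rm bal}_{\mathcal{O}}/2$.

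The two identities of Step 1 still go through. Both the vertices and $\eta_{\mathcal{O}}/2$ are compared through integral cochains, so the equidistance and $\tfrac14\sum_e w_e$ estimates survive after projecting by $\pi^W$. Applying $\delta^*$ then gives the holes ${\rm bal}_{\mathcal{O}}/2+L_G$, and regularity finishes the argument as in Theorem \ref{lapenehol_theo}.

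If one insists on the literal cut-space pullback, I would first test non-constant $w$ on a small regular graph such as $K_4$. There the circumcentre seems to move away from ${\rm bal}_{\mathcal{O}}/2$, which would show that the variational reading is the intended one.
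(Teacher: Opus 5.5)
Your diagnosis of the transfer step is right. It is exactly the point the paper's one-line proof (``follows the same lines as that of Theorem \ref{lapenehol_theo}'') passes over. With $\mathcal{E}_{G,w}$ defined as the pullback along $(\delta^*_{\rm res})^{-1}$, your Step 1 is correct. What it yields, however, is that the holes of $(L_G,\mathcal{E}_{G,w})$ are $\delta^*\pi^w_N(\eta_{\mathcal{O}}/2)+L_G$, and $\delta^*\pi^w_N\neq\delta^*$.

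Your repair fails at ``the two identities of Step 1 still go through''. The Pythagoras step in Step 1 works only because the lattice points ${\bf q}$ lie in the subspace onto which you project. In the variational picture the lattice points are $\pi^W({\bf q})$ with ${\bf q}\in N^1_G$. In general $N^1_G\not\subset W=w^{-1}\cdot(N^1_G\otimes\mathbb{R})$, so
\[
\|\pi^W({\bf q})-\pi^W(\eta_{\mathcal{O}}/2)\|_{e,w}^2=\|{\bf q}-\eta_{\mathcal{O}}/2\|_{e,w}^2-\|(1-\pi^W)({\bf q}-\eta_{\mathcal{O}}/2)\|_{e,w}^2,
\]
and the subtracted term depends on ${\bf q}$. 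Equivalently, $\|\pi^W(x)\|_{e,w}^2$ is the minimum of $\|x+\phi\|_{e,w}^2$ over real flows $\phi$, and $x+\phi$ is no longer half-integral. So neither the equidistance of the vertices nor the lower bound $\tfrac14\sum_e w_e$ follows. Comparing ``through integral cochains'' does not rescue it.

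In fact no repair is possible: for non-constant $w$ the statement is false under both readings.
\begin{itemize}
\item \textbf{Setup.} Take the triangle $C_3$, which is $2$-regular with $g=1$ and $K_G=0$. Then ${\rm Crit}_{\triangle}(L_G)=\mathcal{N}_G={\rm Div}^0(G,\mathbb{Z})\setminus L_G$ consists of integral points. Put $w=2$ on $v_1v_3$ and $w=1$ on $v_1v_2$ and $v_2v_3$. Let $\mathcal{O}$ be $v_1\to v_2\to v_3$, $v_1\to v_3$. Then ${\rm bal}_{\mathcal{O}}/2=(v_3)-(v_1)$, and the simplex is $\{0,\Delta(\iota_{\{v_3\}}),\Delta(\iota_{\{v_2,v_3\}})\}$.
\item \textbf{Variational reading.} Here $\mathcal{E}_{G,w}$ is the energy of the network with resistances $w_e$. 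The squared distances from ${\rm bal}_{\mathcal{O}}/2$ to these three vertices are the effective resistances $R_{13}=1$, $R_{23}=\tfrac34$, $R_{12}=\tfrac34$, which are not equal. The actual holes are non-integral: for example, $\tfrac{13}{12}(v_1)-(v_2)-\tfrac{1}{12}(v_3)$ is the circumcentre, and an interior point, of the Delaunay triangle $\{0,\Delta(\iota_{\{v_1\}}),\Delta(\iota_{\{v_1,v_3\}})\}$.
\item \textbf{Literal reading.} Since $\Delta=3\,{\rm id}$ on ${\rm Div}^0$, $\mathcal{E}_{G,w}(D)$ is proportional to $\sum_{uv\in E}w_{uv}(D_u-D_v)^2$. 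The point ${\bf p}=\tfrac{9}{10}\big((v_3)-(v_1)\big)$ is equidistant from the three vertices above and equals $0.3\,\Delta(\iota_{\{v_3\}})+0.3\,\Delta(\iota_{\{v_2,v_3\}})$. No other lattice point is closer, so ${\bf p}$ is a hole outside ${\rm Crit}_{\triangle}(L_G)$. It is precisely $\delta^*\pi^w_N(\eta_{\mathcal{O}}/2)$, as your Step 1 predicts.
\end{itemize}

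So the argument, yours or the paper's, only goes through for constant $w$, where it reduces to Theorem \ref{lapenehol_theo}. Your closing inference is also a non sequitur: failure of the literal reading on $K_4$ says nothing about whether the variational one holds.
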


The proof follows the same lines as that of Theorem \ref{lapenehol_theo}. For every graph $G$ and a pair $(d_0, r_0)$ satisfying  $\rho(g,r_0,d_0)\geq 0$, whether there is a weight function $w$ such that $\mathcal{E}_{G,w}$, via a Cheeger inequality, yields Brill-Noether existence at $(d_0, r_0)$  is  a topic for future work. 




\footnotesize
\noindent {\bf Author's address:}

\smallskip

\noindent Department of Mathematics,\\
Indian Institute of Technology Bombay,\\
Powai, Mumbai,
India 400076.\\

\noindent {\bf Email id:} madhu@math.iitb.ac.in, madhusudan73@gmail.com.\\

\end{document}